\def\XXint#1#2#3{{\setbox0=\hbox{$#1{#2#3}{\int}$}
     \vcenter{\hbox{$#2#3$}}\kern-.5\wd0}}
\DeclareMathOperator{\tr}{tr}
\newenvironment{mat}{\left[\begin{array}{ccccccccccccccc}}{\end{array}\right]}
\newcommand\bcm{\begin{mat}}
\newcommand\ecm{\end{mat}}
\newcommand{\bea}{\begin{eqnarray}}
\newcommand{\eea}{\end{eqnarray}}
\newcommand{\bean}{\begin{eqnarray*}}
\newcommand{\eean}{\end{eqnarray*}}
\newcommand{\ba}{\begin{array}}
\newcommand{\ea}{\end{array}}
\newcommand{\beqs}{\begin{equation*}\begin{split}}
\newtheorem{remark}{Remark}[section]
\newtheorem{condition}{Condition}[section]
\long\def\symbolfootnote[#1]#2{\begingroup%
\def\thefootnote{\fnsymbol{footnote}}\footnote[#1]{#2}\endgroup}
\newcommand{\D}{\mathrm{d}}
\newcommand{\ds}{\displaystyle}
\newtheorem{theorem}{Theorem}
\newtheorem{lemma}{Lemma}
\newtheorem{proposition}{Proposition}
\newtheorem{definition}{Definition}
\newtheorem{corollary}{Corollary}
\thanks{The authors would like to thank Folkmar Bornemann for the data to display $F_{2}^\mathrm{gap}(t)$. {This work was supported in part by grants NSF-DMS-1303018 (TT) and NSF-DMS-1300965 (PD).}}
\newcommand{\TheTitle}{Universality for eigenvalue algorithms on sample covariance matrices}
\title{{\TheTitle}}
\author{Percy Deift}
\address{Courant Institute of Mathematical Sciences,
New York University,
251 Mercer St.,
New York, NY 10012, USA}
\email{deift@cims.nyu.edu}
\author{Thomas Trogdon}
\address{Department of Mathematics,
University of California, Irvine,
Irvine, CA 92697-3875, USA}
\email{ttrogdon@math.uci.edu}
\DeclareMathOperator{\diag}{diag}
\renewcommand{\ds}{}
\keywords{universality, eigenvalue computation, random matrix theory}
\subjclass[2000]{15B52, 65L15, 70H06}
\begin{document}

\begin{abstract}
We prove a universal limit theorem for the halting time, or iteration count, of the power/inverse power methods and the QR eigenvalue algorithm. Specifically, we analyze the required number of iterations to compute extreme eigenvalues of random, positive-definite sample covariance matrices to within a prescribed tolerance.  The universality theorem provides a complexity estimate for the algorithms which, in this random setting, holds with high probability.  The method of proof relies on recent results on the statistics of the eigenvalues and eigenvectors of random sample covariance matrices (i.e., delocalization, rigidity and edge universality).
\end{abstract}

\maketitle


\section{Introduction}

In this paper, we prove a universal limit theorem for the fluctuations in the runtime (or halting time) of three classical eigenvalue algorithms applied to positive-definite random matrices.  The theorem is universal in the sense that the limiting distribution does not depend on the distribution of the entries of the matrix (within a class).

One can trace the search for universal behavior in eigenvalue algorithm runtimes to the largely-experimental work of Pfrang, Deift and Menon \cite{DiagonalRMT}.  The authors considered three algorithms (QR, matrix sign and Toda) and ran the algorithms to the time of first deflation, which we now describe in more detail.  Given an $N \times N$ matrix $H$, the algorithms produce isospectral iterates $X_n$, $X_0 = H$, $\mathrm{spec}\,X_n = \mathrm{spec}\, H$, and generically $X_n \to \diag(\lambda_1,\ldots,\lambda_N)$.  Necessarily, the $\lambda_i$'s are the eigenvalues of $H$. However, one does not typically run the algorithm until the norm of all of the off-diagonal entries is small. Rather, one considers the submatrices $X_n^{(k)}$ which consist of the entries of $X_n$ that are in the first $k$ rows and the last $N-k$ columns. The $k$-\emph{deflation times} are defined as
\begin{align*}
T^{(k)}(H) := \min\{n: \|X_n^{(k)}\|_{\mathrm{F}} \leq \epsilon\}, \quad \epsilon >0.
\end{align*}
Here $\|\cdot\|_{\mathrm{F}}$ denotes the Frobenius norm\footnote{The authors in \cite{DiagonalRMT} actually considered a scaled $\infty$-norm rather than the Frobenius norm.}.  Then the time of first deflation is given by $T_{\mathrm{def}}(H) := \min_{1 \leq k \leq N-1} T^{(k)}(H)$.  We define $\hat k = \hat k(H)$ to be the largest value of $k$ such that $T^{(k)}(H) = T_\mathrm{def}(H)$.  It follows that when $k = \hat k(H)$, the eigenvalues of the leading $k \times k$ and $(N- k)\times (N - k)$ submatrices approximate the eigenvalues of $H$ to $\mathcal O(\epsilon)$ .  The algorithm is then applied to the smaller submatrices, and so on.

A typical experiment from \cite{DiagonalRMT} goes as follows.  Let $Y_\mathrm{G}$ and $Y_\mathrm{B}$ be $N \times N$ matrices of iid standard normal and iid mean-zero, variance-one Bernoulli random variables, respectively.  Then define $H_\mathrm{G} = (Y_\mathrm{G} + Y_\mathrm{G}^T)/\sqrt{2N}$ and $H_\mathrm{B} = (Y_\mathrm{B} + Y_\mathrm{B}^T)/\sqrt{2N}$ which are real, symmetric random matrices (see \cite{Deift2014} for complex Hermitian matrices).  After sampling the integer-valued random variables $T_{\mathrm{def}}(H_\mathrm{G})$ and $T_{\mathrm{def}}(H_\mathrm{B})$, for $N$ large, say 10,000 times, we define the empirical fluctuations
\begin{align*}
\bar T_{\mathrm{def}}(H_\mathrm{G}) = \frac{ T_{\mathrm{def}}(H_\mathrm{G}) - \langle T_{\mathrm{def}}(H_\mathrm{G}) \rangle }{\sigma_\mathrm{G}},\\
\bar T_{\mathrm{def}}(H_\mathrm{B}) = \frac{ T_{\mathrm{def}}(H_\mathrm{B}) - \langle T_{\mathrm{def}}(H_\mathrm{B}) \rangle }{\sigma_\mathrm{B}}.
\end{align*}
where $\langle \cdot \rangle$ and $\sigma_{(\cdot)}$ represent the sample mean and sample standard deviation, respectively.  We plot the histograms for the empirical fluctuations in Figure~\ref{f:QRdeflation}.  The histograms overlap surprisingly well for the two different ensembles,  indicating that after centering and rescaling, the distribution of the time of first deflation is universal.

\begin{figure}[tbp]
  \centering
\subfigure[]{\includegraphics[width=.49\linewidth]{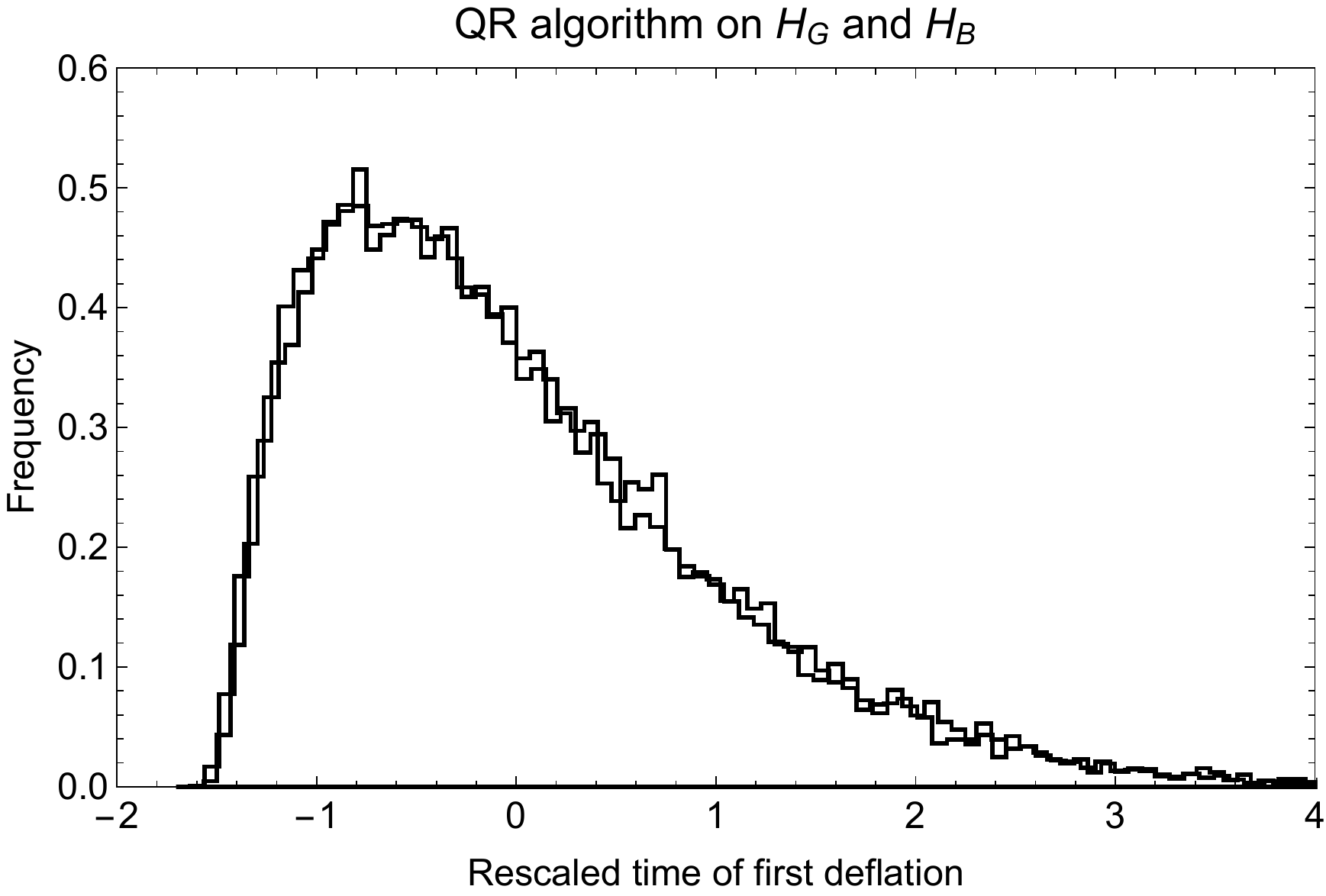}\label{f:QRdeflation}}
\subfigure[]{\includegraphics[width=.49\linewidth]{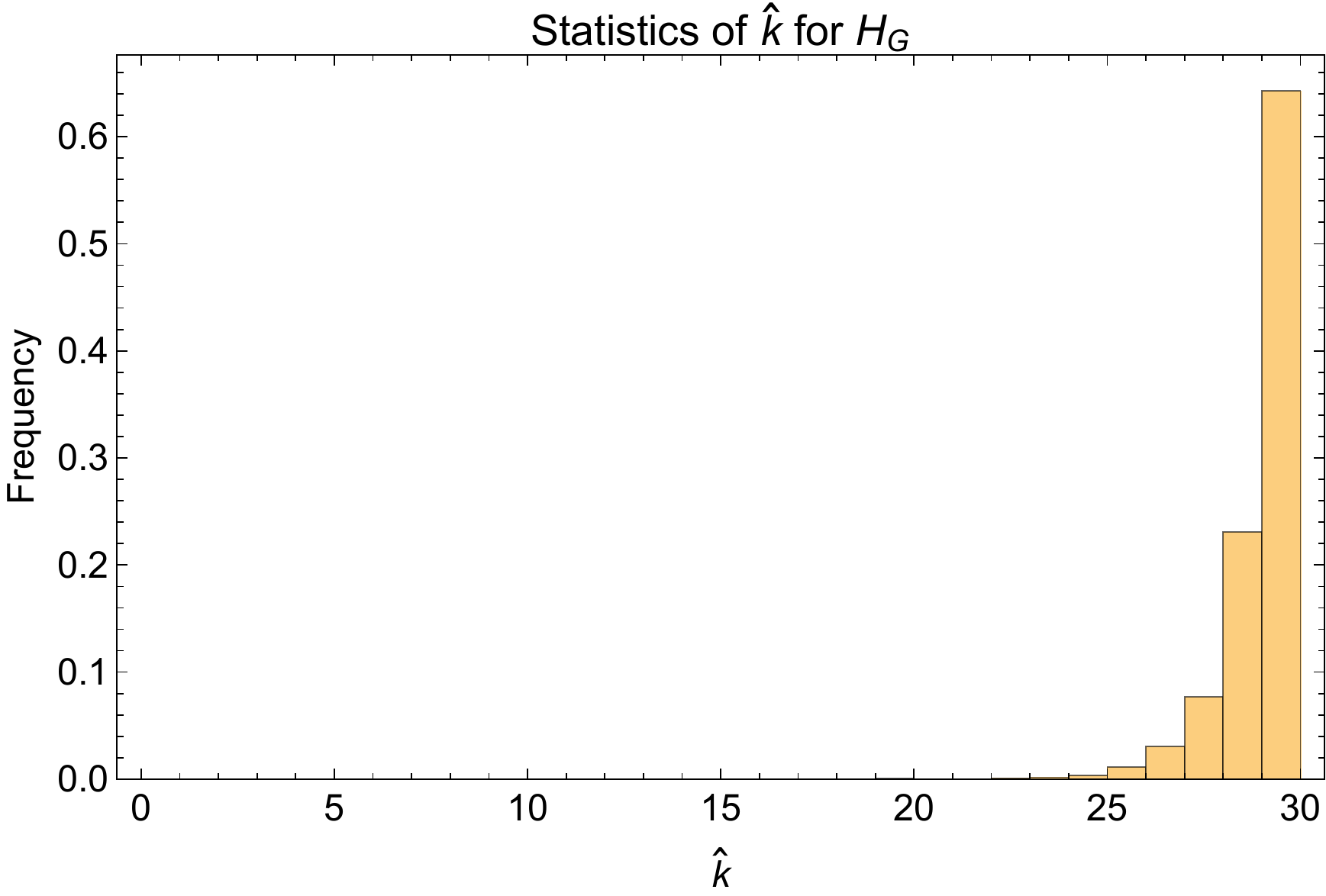}\label{f:hatk}}
\caption{(a) Sampled histograms for $\bar T_{\mathrm{def}}(H_\mathrm{G}) $ and $\bar T_{\mathrm{def}}(H_\mathrm{B})$. Dispite the differences in the underlying distributions for $H_{\mathrm G}$ and $H_{\mathrm B}$, the statistics for the shifted and scaled time of first deflation is the effectively the same. (b) The statistics of $\hat k$ when $N = 30$ for $H_{\mathrm G}$. It is clear that $\hat k = N-1$ occurs with the largest probability.}
\end{figure}

Proving theorems about the random variable $T_{\mathrm{def}}(H)$ is particularly difficult as one has to analyze the minimum of $N-1$ correlated random variables.  In \cite{Deift2016} the authors proved a (universal) limit theorem for the $1$-deflation time of the so-called Toda algorithm.  In this paper we prove an analog of that result for the $(N-1)$-deflation time for the QR (eigenvalue) algorithm acting on positive definite matrices.  This is an important first step in proving a limit theorem for  $T_{\mathrm{def}}(H)$  because it is the most likely that $\hat k = N-1$, see Figure~\ref{f:hatk}.  We also include similar results for the power (P) and inverse power (IP) methods as the analysis is similar. For these two methods, we incorporate random starting vectors. The analysis and results of the current work are quite similar to that in \cite{Deift2016}, where we prove universality for the Toda eigenvalue algorithm, showing its wide applicability.

\subsection{Relation to previous work and complexity theory}

The statistical analysis of algorithms has been performed in many settings, usually with an eye towards complexity theory.  In relation to Gaussian elimination, the seminal work is the analysis of Goldstine and von Neumann \cite{Goldstine1951} on the condition number of random matrices.   This is closely related to the later work of Edelman \cite{Edelman1988}, also on condition numbers.  The expected number of pivot steps in the simplex algorithm was analyzed by Smale \cite{Smale1985} and Borgwardt \cite{Borgwardt1987}.  The methodology of smoothed analysis was introduced in \cite{Spielman2001} and applied in a variety of settings \cite{Manthey2007,Menon2016,Sankar2006}.

The closest work, within the realm of complexity theory, to the current work is that of Kostlan \cite{Kostlan1988}.  Kostlan showed that for the power method on $H_{G}$ the expected halting time to compute an \underline{eigenvector} is infinite.  Kostlan showed that when one conditions on all of the eigenvalues being positive, the upper bound on the halting time is $\mathcal O(N^2 \log N)$.  Instead of conditioning, and eigenvector computation, we turn to sample covariance matrices which (with high probability) have positive eigenvalues and use the power methods to compute the extreme eigenvalues.  With this we are able to determine the precise limiting distribution of the halting time, which contains far more information than simply an upper bound.  To our knowledge, this is the first time this has been done for a classical numerical method.

For $\alpha$ in the scaling region given by Condition~\ref{cond:scaling}, the halting times given in Theorem~\ref{t:main} scale like $(\alpha - 2/3)N^{2/3}\log N$ in order to obtain an accuracy of $N^{-\alpha/2}$. This is a key conclusion of our results which gives an estimate on the complexity of the QR algorithm, and also the power and inverse power methods.

  Through many detailed computations, universality in numerical computation has been observed in many numerical algorithms beyond the QR algorithm and the power and inverse power methods (see  \cite{Deift2016,Deift2014,Deift2015,DiagonalRMT,Sagun2015} ): the conjugate gradient algorithm, the matrix sign eigenvalue algorithm, the Toda eigenvalue algorithm, the Jacobi eigenvalue algorithm, the GMRES algorithm, a genetic algorithm and the gradient and stochastic gradient descent algorithms. This work  presents further examples, in addition to \cite{Deift2016}, where one can prove this type of universality.  This advances the contention of the authors that universality is a bona fide and basic phenomenon in numerical computation.

\subsection{Open questions}

The main open question related to this work is the asymptotics of the time of first deflation $T_\mathrm{def}$.  A related and unknown detail is the tail behavior of the limiting distribution. As discussed in detail in \cite{Deift2016}, the limiting distribution in Theorem~\ref{t:main} in \cite{Deift2016} for the halting time has one finite moment for real matrices and two finite moments for complex matrices.  If one constructed an algorithm with a sub-Gaussian limiting distribution, it may be preferable. We believe this is the case for $T_\mathrm{def}$.  We also believe that its distribution is related to the largest gap in the spectrum of the stochastic Airy operator \cite{Ramirez2011}.  Furthermore, can one extend our results for the QR algorithm to indefinite ensembles?

We only consider random matrices with entries that are exponentially localized, see \ref{e:subexp}.  It is not known if this condition can be relaxed but it is itself an important open question.  Finally, additional halting criteria can be employed.  One could look for the time to compute eigenvectors with the power method, or to compute the entire spectrum with the QR algorithm.

\section{Main results}

In this paper we discuss computing the smallest and largest eigenvalues of random positive definite matrices to an accuracy $\epsilon$.  We have a basic condition that we enforce on $\epsilon$ which requires that $\epsilon$ is appropriately small.
\begin{condition}\label{cond:scaling}
\begin{align*}
\frac \alpha 2 : = \log \epsilon^{-1}/\log N \geq 5/3 + \sigma/2,
\end{align*}
for $0 < \sigma < 1/3$ fixed.
\end{condition}

For $j = 0,1,2,\dots$, we let $X_j$ be the iterates of the QR algorithm (QR, defined in Section~\ref{sec:QR}) and $\lambda_{\mathrm{P},j}$ and $\lambda_{\mathrm{IP},j}$ be the iterates of the power and inverse power methods, respectively (P and IP, respectively, defined in Section~\ref{sec:Power}).  We specify (discrete) halting times for these algorithms applied to a matrix $H$ with starting vector $v$ as follows:
\begin{align*}
\tau_{\mathrm{QR}, \epsilon}(H) &:= \min \left\{ j: \sum_{n=1}^{N-1} |[X_j]_{Nn}|^2 \leq \epsilon^2 \right\},\\
\tau_{\mathrm{P}, \epsilon}(H,v) &:= \min \{ j: |\lambda_{\mathrm{P},j}- \lambda_{\mathrm{P},j+1}| \leq \epsilon^2\},\\
\tau_{\mathrm{IP}, \epsilon}(H,v) &:= \min \{ j: |\lambda_{\mathrm{IP},j}^{-1}- \lambda^{-1}_{\mathrm{IP},j+1}| \leq \epsilon^2\}.
\end{align*}
Note that for the QR algorithm the $(N,N)$ entry of $X_j$, $[X_j]_{NN}$, is an approximation of the smallest eigenvalue $\lambda_1$ as is $\lambda_{\mathrm{IP},j}$. On the other hand, $\lambda_{\mathrm{P},j}$ is an approximation of the largest eigenvalue $\lambda_N$. Our main results are summarized in the following Theorem and Propsosition.  See Definition~\ref{def:WE} for the definition of sample covariance matrices and Definition~\ref{def:Fbeta} for the distribution function $F_\beta^{\mathrm{gap}}(t)$.  The constants $\lambda_\pm$ and $d$ are given in \eqref{e:MP}.

\begin{theorem}[Universality]\label{t:main}
Let $H$ be a real ($\beta = 1$) or complex ($\beta = 2$) $N \times N$ sample covariance matrix and let $v$ be a (random) unit vector independent of $H$.  Assuming $\epsilon$ satisfies Condition~\ref{cond:scaling}, for $t \in \mathbb R$
\begin{align*}
F_\beta^{\mathrm{gap}}(t) &= \lim_{N \to \infty} \mathbb P \left( \frac{\displaystyle \tau_{\mathrm{QR},\epsilon}(H)}{ \displaystyle  2^{-7/6} \lambda_-^{1/3} d^{-1/2} N^{2/3}  (\log \epsilon^{-1} - 2/3 \log N)}  \leq t\right)\\
&= \lim_{N \to \infty} \mathbb P \left( \frac{\displaystyle \tau_{\mathrm{IP},\epsilon}(H,v)}{ \displaystyle  2^{-7/6} \lambda_-^{1/3} d^{-1/2} N^{2/3}  (\log \epsilon^{-1} - 2/3 \log N)}  \leq t\right)\\
&= \lim_{N \to \infty} \mathbb P \left( \frac{\displaystyle \tau_{\mathrm{P},\epsilon}(H,v)}{ \displaystyle  2^{-7/6} \lambda_+^{1/3} d^{-1/2} N^{2/3}  (\log \epsilon^{-1} - 2/3 \log N)}  \leq t\right).
\end{align*}
\end{theorem}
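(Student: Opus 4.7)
Following the approach of \cite{Deift2016} for Toda, expand $v=\sum_i c_i\phi_i$ in the eigenbasis of $H$. A direct asymptotic computation gives
\[
\lambda_{P,k} = \lambda_N - \frac{c_{N-1}^2}{c_N^2}(\lambda_N-\lambda_{N-1})\Bigl(\frac{\lambda_{N-1}}{\lambda_N}\Bigr)^{2k}(1+o(1)),
\]
so $|\lambda_{P,k+1}-\lambda_{P,k}|$ is asymptotic to $2(c_{N-1}/c_N)^2(\lambda_N-\lambda_{N-1})^2\lambda_N^{-1}(\lambda_{N-1}/\lambda_N)^{2k}$. Solving $|\lambda_{P,k+1}-\lambda_{P,k}|=\epsilon^2$ yields
\[
\tau_{P,\epsilon}(H,v) = \frac{\log\epsilon^{-1} - \log(\lambda_N-\lambda_{N-1}) + O(1)_{\mathrm{ev}}}{\log(\lambda_N/\lambda_{N-1})}(1+o(1)),
\]
where $O(1)_{\mathrm{ev}}$ depends on $c_N/c_{N-1}$ and $\lambda_N$. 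An identical computation with $\lambda_1,\lambda_2,c_1,c_2$ in place of $\lambda_N,\lambda_{N-1},c_N,c_{N-1}$ produces $\tau_{IP,\epsilon}$. For QR, Wilkinson's classical bound on off-diagonal decay gives $|[X_k]_{N,n}| \asymp C_{N,n}(\lambda_1/\lambda_{N-n+1})^k$; the sum $\sum_{n<N}|[X_k]_{Nn}|^2$ is dominated by the $n=N-1$ term decaying at rate $(\lambda_1/\lambda_2)^{2k}$, producing a formula for $\tau_{QR,\epsilon}$ of identical shape with gap $\lambda_2-\lambda_1$.

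\textbf{Step 2: probabilistic inputs.} Edge rigidity for sample covariance matrices gives $\lambda_1 = \lambda_- + O(N^{-2/3+o(1)})$ and $\lambda_N = \lambda_+ + O(N^{-2/3+o(1)})$ with high probability, so $\log(\lambda_N/\lambda_{N-1}) = (\lambda_N-\lambda_{N-1})/\lambda_+ + o(N^{-2/3})$ (and similarly at the lower edge). Eigenvector delocalization, together with the rotational invariance of $v$ (independent of $H$), implies $c_{N-1}/c_N = \Theta(1)$ with probability $1-o(1)$, so the $O(1)_{\mathrm{ev}}$ term is genuinely $O(1)$ in logarithm. Finally, rigidity for the extremal gap gives $\log(\lambda_N-\lambda_{N-1}) = -\tfrac23\log N + O(\log\log N)$, producing the characteristic $-\tfrac23\log N$ correction. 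Assembling,
\[
\tau_{P,\epsilon}(H,v) = \frac{\lambda_+\bigl(\log\epsilon^{-1} - \tfrac23\log N\bigr)}{\lambda_N-\lambda_{N-1}}\,(1+o_{\mathbb P}(1)),
\]
and analogously for $\tau_{IP,\epsilon},\tau_{QR,\epsilon}$ at the lower edge (with $\lambda_-$ and $\lambda_2-\lambda_1$).

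\textbf{Step 3: edge universality and identification of $F_\beta^{\mathrm{gap}}$.} By edge universality for sample covariance matrices, the rescaled extremal gap $\lambda_\pm^{-2/3}d^{1/2}N^{2/3}(\lambda_{j}-\lambda_i)$ at either edge converges in distribution to $G_\beta$, the gap between the two extremal points of the Airy$_\beta$ process; the scaling constant $d^{1/2}\lambda_\pm^{-2/3}$ follows from the coefficient of $\sqrt{\lambda-\lambda_\pm}$ in the Marchenko--Pastur density. Substituting yields
\[
\frac{\tau_{QR,\epsilon}}{\lambda_-^{1/3}d^{-1/2}N^{2/3}(\log\epsilon^{-1}-\tfrac23\log N)} \to \frac{2^{7/6}}{G_\beta}
\]
in distribution (with the $2^{7/6}$ absorbing the deterministic product of constants from Step 1); the identical limit holds for IP and P, since the two-point Airy$_\beta$ gap law is common to both edges. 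This identifies $F_\beta^{\mathrm{gap}}$ with the distribution function of $2^{7/6}/G_\beta$.

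\textbf{Main obstacle.} The principal difficulty lies in uniformly controlling the $o_{\mathbb P}(1)$ errors across the full range of $\epsilon$ allowed by Condition~\ref{cond:scaling}: (i) ensuring that subleading terms (the $(N-2)$-term in the power-method expansion, or $n<N-1$ terms for QR) do not produce decay competitive with the leading one, which relies on the almost-sure absence of collisions in the Airy$_\beta$ process; (ii) for QR, bounding the eigenvector-dependent prefactor $C_{N,N-1}$---a product of eigenvector entries of $H$---requires \emph{isotropic} delocalization for sample covariance matrices rather than pointwise estimates, together with fine control of the products that arise from iterating QR; (iii) handling the rare event that $|c_N|$ (for P) or $|c_1|$ (for IP) is abnormally small, via anti-concentration for the independent random vector $v$. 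The combinatorial/eigenvector analysis of $C_{N,N-1}$ for QR is the most delicate novelty; the remaining ingredients are now-standard tools in the random matrix theory of sample covariance matrices.
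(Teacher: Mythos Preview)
Your overall strategy---derive a deterministic asymptotic formula for the halting time in terms of the extremal gap and eigenvector components, feed in rigidity/delocalization to control the error terms, and invoke edge universality to identify the limit as the reciprocal of the Airy$_\beta$ gap---is exactly the route the paper takes. Steps~2 and~3 of your sketch match the paper's Theorems~\ref{t:QR-tech} and~\ref{t:Power-tech} essentially line for line, and your identification of obstacles~(i) and~(iii) corresponds to the paper's Conditions~\ref{cond:uppergap}--\ref{cond:rigidity} and their high-probability verification via Theorems~\ref{t:generic} and~\ref{t:p}.

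Where you diverge from the paper is in the QR analysis, and here you have made the problem harder than it is. You propose to estimate the individual entries $[X_k]_{N,n}$ via Wilkinson-type bounds, which would indeed drag in eigenvector components from \emph{every} row and require the ``fine control of products'' you flag as the most delicate novelty. The paper avoids this entirely by the algebraic identity
\[
E_{\mathrm{QR}}(t)=\sum_{n=1}^{N-1}|[X(t)]_{Nn}|^2=[X(t)^2]_{NN}-[X(t)]_{NN}^2,
\]
together with the exact formula $U_{Nn}(t)=R_{NN}(t)\,U_{Nn}(0)\,\lambda_n^{-t}$ for the last row of the eigenvector matrix of $X(t)$. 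The upshot is that $E_{\mathrm{QR}}(t)$ depends only on the eigenvalues and on $\beta_n=|U_{Nn}(0)|=|\langle e_N,u_n\rangle|$---precisely the same data as the power methods with starting vector $v=e_N$. No products of eigenvector entries appear, and ordinary (not ``deeper'' isotropic) delocalization suffices. Consequently QR is not the hardest case; it is on exactly the same footing as IP.

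A second, more technical difference: rather than your informal ``$(1+o(1))$'' asymptotics for the halting time, the paper decomposes the error functional as $E=E_0+E_1$ with $E_1\geq 0$ subdominant, defines an explicit approximate halting time $T^*$ by solving the leading-order equation, and then controls $|T-T^*|$ via the mean value theorem using a \emph{lower} bound on $|E_0'|$ over an interval $L_\alpha$ that is shown a~priori to contain both $T$ and $T^*$. This is what makes the $o_{\mathbb P}(1)$ in your Step~2 rigorous uniformly over the range of $\epsilon$ in Condition~\ref{cond:scaling}.
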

This theorem is a direct consequence of Theorems~\ref{t:QR-tech} and \ref{t:Power-tech}, after noting that, for example, $|\tau_{\mathrm{QR}, \epsilon} - T_{\mathrm{QR}, \epsilon}| \leq 1$ where $T_{\mathrm{QR}, \epsilon}$ appears in Theorem~\ref{t:QR-tech}. This is a universality theorem in the sense that it states that for $N$ is sufficiently large the distribution of the halting time is independent of the distribution on $H$.

The following Proposition shows that we obtain an accuracy of $\epsilon$ but not of $\epsilon^2$, \emph{i.e.} that our halting criteria are sufficient but not too restrictive.  It is a restatement of Propositions~\ref{p:QRtrue} and \ref{p:Powertrue}.
\begin{proposition}\label{p:main}
Assuming $\epsilon$ satisfies Condition~\ref{cond:scaling}, for any real or complex sample covariance matrix
\begin{align*}
\epsilon^{-1}| [X_{\tau_{\mathrm{QR}, \epsilon}}]_{NN} - \lambda_1|, ~~ \epsilon^{-1}| \lambda_{\mathrm{IP},\tau_{\mathrm{IP}, \epsilon}} - \lambda_1|, ~~ \text{ and } ~~ \epsilon^{-1}| \lambda_{\mathrm{P},\tau_{\mathrm{P}, \epsilon}} - \lambda_N|
\end{align*}
converge to zero in probability, while
\begin{align*}
\epsilon^{-2}| [X_{\tau_{\mathrm{QR}, \epsilon}}]_{NN} - \lambda_1|, ~~ \epsilon^{-2}| \lambda_{\mathrm{IP},\tau_{\mathrm{IP}, \epsilon}} - \lambda_1|, ~~ \text{ and } ~~ \epsilon^{-2}| \lambda_{\mathrm{P},\tau_{\mathrm{P}, \epsilon}} - \lambda_N|
\end{align*}
converge to $\infty$ in probability.
\end{proposition}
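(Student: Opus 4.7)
The plan is, for each algorithm, to relate the true approximation error at the halting time $\tau$ to the halting quantity ($\epsilon^2$) by a single multiplicative factor that is essentially $1/G$, where $G$ is the relevant edge eigenvalue gap. Edge rigidity / Tracy--Widom statistics for sample covariance matrices force $G \asymp N^{-2/3}$ with high probability. Under Condition~\ref{cond:scaling}, $\epsilon \leq N^{-5/3-\sigma/2}$, so $\epsilon^{-1}\cdot(\epsilon^2/G) \asymp \epsilon N^{2/3} = O(N^{-1-\sigma/2}) \to 0$ (sufficiency) while $\epsilon^{-2}\cdot(\epsilon^2/G) \asymp N^{2/3} \to \infty$ (non--over-restriction). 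This same mechanism drives all three statements.

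For the QR algorithm I would decompose
\[
X_j = \begin{pmatrix} A_j & b_j \\ b_j^T & c_j \end{pmatrix}, \qquad c_j = [X_j]_{NN}, \qquad \|b_j\|^2 = \sum_{n<N}|[X_j]_{Nn}|^2,
\]
and use the Schur--complement identity $c_j - \lambda_1 = \sum_k (w_k^T b_j)^2/(\mu_k^{(j)} - \lambda_1)$, where $(\mu_k^{(j)},w_k)$ is the spectral data of $A_j$. Classical QR convergence on symmetric positive-definite matrices aligns $b_j$ with the eigenvector of the smallest eigenvalue $\mu_1^{(j)} \to \lambda_2$ of $A_j$ (the mode of slowest decay, rate $(\lambda_1/\lambda_2)^j$), so the sum is dominated by its $k=1$ term and $c_j - \lambda_1 \asymp \|b_j\|^2/(\lambda_2-\lambda_1)$. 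At $j = \tau_{\mathrm{QR},\epsilon}$ the halting condition gives $\|b_\tau\|^2 \leq \epsilon^2$; since the criterion fails at $\tau-1$ and the per-step factor is $(\lambda_1/\lambda_2)^2 = 1 - O(N^{-2/3})$, we also get $\|b_\tau\|^2 \geq \epsilon^2(1 - O(N^{-2/3}))$, so $\|b_\tau\|^2 \asymp \epsilon^2$ and hence $c_\tau - \lambda_1 \asymp \epsilon^2 N^{2/3}$, giving both bounds.

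For the power method, expand $v = \sum_k c_k u_k$ in the eigenbasis of $H$ and compute the Rayleigh quotient
\[
\lambda_{\mathrm{P},j} = \frac{\sum_k c_k^2 \lambda_k^{2j+1}}{\sum_k c_k^2 \lambda_k^{2j}},
\]
which yields $\lambda_N - \lambda_{\mathrm{P},j} \sim (c_{N-1}/c_N)^2(\lambda_{N-1}/\lambda_N)^{2j}(\lambda_N-\lambda_{N-1})$ and therefore
\[
|\lambda_{\mathrm{P},j+1}-\lambda_{\mathrm{P},j}| \sim |\lambda_N-\lambda_{\mathrm{P},j}|\bigl(1-(\lambda_{N-1}/\lambda_N)^2\bigr) \asymp |\lambda_N-\lambda_{\mathrm{P},j}|(\lambda_N-\lambda_{N-1})/\lambda_N.
\]
At $j = \tau_{\mathrm{P},\epsilon}$ this gives $|\lambda_N-\lambda_{\mathrm{P},\tau}| \asymp \epsilon^2/(\lambda_N-\lambda_{N-1}) \asymp \epsilon^2 N^{2/3}$, with the matching lower bound coming from the failed criterion at $\tau-1$. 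The inverse-power method is identical with the top and bottom edges interchanged. Independence of $v$ and $H$ together with the isotropy of a random unit vector guarantees $|c_N|,|c_{N-1}| \asymp N^{-1/2}$ with high probability, legitimizing the dominant-mode approximation.

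The main technical obstacle is making the ``$1+o(1)$'' corrections quantitative and uniform up to the random halting time, which is of order $N^{2/3}\log N$ by Theorem~\ref{t:main}. This requires simultaneously invoking (i) edge rigidity at the relevant end of the spectrum, placing $\lambda_2 - \lambda_1$ (resp.\ $\lambda_N - \lambda_{N-1}$) at the correct $N^{-2/3}$ scale; (ii) eigenvector delocalization / isotropy against the random starting vector, controlling $c_N/c_{N-1}$; and (iii) uniform negligibility of the subdominant spectral contributions to $b_j$ and to $\lambda_{\mathrm{P},j}$ over $\tau$ iterations. Each of these ingredients is supplied by the same edge-universality package that already underlies the proof of Theorem~\ref{t:main}, so that once the identities above are in hand, the probability bounds follow by intersecting a finite number of high-probability events.
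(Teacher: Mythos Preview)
Your mechanism is correct --- the true error at halting really is $\asymp \epsilon^2/G \asymp \epsilon^2 N^{2/3}$, and Condition~\ref{cond:scaling} then gives both conclusions --- but the paper reaches the same bounds by a shorter and more uniform route. Rather than relating the true error to the halting quantity, the paper observes that for all three algorithms one has an \emph{explicit spectral formula} of the same shape,
\[
E^{\mathrm{True}}(t)=\frac{\sum_{n\ge 2}\Delta_n\,\delta_n^{t}\,\nu_n}{\sum_{n\ge 1}\delta_n^{t}\,\nu_n},
\]
(for QR this comes from the closed form $U_{Nn}(t)=R_{NN}U_{Nn}(0)\lambda_n^{-t}$, so that $X_{NN}(t)$ is already a Rayleigh quotient in the eigenbasis). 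Lemma~\ref{l:estimate} then gives the two-sided bound $N^{-\alpha+2/3-O(s)}\le E^{\mathrm{True}}(t)\le N^{-\alpha+2/3+O(s)}$ uniformly for $t$ in the \emph{a priori} interval $L_\alpha$ (resp.\ $\hat L_\alpha$), and Lemmas~\ref{l:QR-T}/\ref{l:Tip} place the halting time in that interval on the high-probability event $\mathcal R_{N,s}$. The conclusions $\epsilon^{-1}E^{\mathrm{True}}\to 0$ and $\epsilon^{-2}E^{\mathrm{True}}\to\infty$ then follow by one line of arithmetic, with no appeal to the halting criterion at the stopping step.

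Your Schur-complement identity for QR is a genuinely different decomposition and is perfectly valid, but to turn it into two-sided bounds you must (i) show $\mu_1^{(\tau)}-\lambda_1\gtrsim \lambda_2-\lambda_1$ (needed for the upper bound on $c_\tau-\lambda_1$) and (ii) justify that $b_\tau$ is aligned with $w_1$ (needed for the lower bound; the crude estimate $c_\tau-\lambda_1\ge\|b_\tau\|^2/(\lambda_N-\lambda_1)$ only gives $\epsilon^{-2}E^{\mathrm{True}}\ge C$, not divergence). Both (i) and (ii) ultimately require exactly the leading-term dominance that Lemma~\ref{l:estimate} encodes, so the paper's direct computation of $X_{NN}(t)-\lambda_1$ in the eigenbasis simply short-circuits that detour. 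For P/IP your eigenbasis expansion is essentially the paper's, but again the paper bounds $E^{\mathrm{True}}(t)$ directly over $\hat L_\alpha$ rather than comparing it to $|\lambda_{\mathrm P,j+1}-\lambda_{\mathrm P,j}|$; this avoids having to control the one-step ratio at the random stopping time.
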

A numerical demonstration of Theorem~\ref{t:main} is given in Section~\ref{sec:numerics}.

The outline of the paper is as follows.  In Section~\ref{sec:RMT} we discuss the fundamental results of random matrix theory that are required to prove our results.  In Section~\ref{sec:numerics} we give a numerical demonstration of Theorem~\ref{t:main}.  Next, in Section~\ref{sec:alg}, we discuss the fundamentals of the power methods and the QR algorithm before we apply the random matrix estimates in Section~\ref{sec:proofs} to prove our results. In Appendix~\ref{sec:error} we analyze the true error of the methods with our chosen halting criteria to see that these criteria are indeed appropriate to the task.  Finally, in Appendix~\ref{sec:proj} we discuss the asymptotic normality of eigenvector projections of random vectors.  This allows us to show that Theorem~\ref{t:main} indeed holds for random starting vectors in the power and inverse power methods.

\section{Results from random matrix theory}\label{sec:RMT}

We now introduce the ideas and results from random matrix theory that are needed to prove our main theorems.  Let $V$ be an $M \times N$ real or complex matrix with $M \geq N$. We consider the ordered eigenvalues $\lambda_j(H) = \lambda_j$, $j = 1,2,\ldots,N$ of $H = V^*V/M$, $\lambda_1 \leq \lambda_2 \leq \cdots \leq\lambda_N$.  Let $\beta_1, \beta_2, \ldots, \beta_{N}$ denote the absolute value of the last components of the associated normalized eigenvectors.  We only consider sample covariance matrices from independent samples.

\begin{definition}[Sample covariance matrix (SCM)]\label{def:WE}
A sample covariance matrix (ensemble) is a real symmetric ($\beta = 1$) or complex Hermitian ($\beta =2$) matrix $H = V^*V/M$, $V = (V_{ij})_{1\leq i \leq M, 1 \leq j \leq N}$ such that $V_{ij}$ are independent random variables for $1 \leq i \leq M$, $1 \leq j \leq N$ given by a probability measure $\nu_{ij}$ with
\begin{align*}
\mathbb E V_{ij} = 0, \quad  \mathbb E |V_{ij}|^2 = 1,
\end{align*}
Next, assume there is a fixed constant $\nu$ (independent of $N,i,j$) such that
\begin{align}\label{e:subexp}
\mathbb P(|V_{ij}| > x) \leq \nu^{-1} \exp(-x^\nu),\quad x > 1.
\end{align}
For $\beta = 2$ (when $V_{ij}$ is complex-valued) the condition
\begin{align*}
\mathbb E V_{ij}^2 = 0,
\end{align*}
must also be satisfied.
\end{definition}

We assume all SCMs have $M \geq N$.  Define the averaged empirical spectral measure
\begin{align*}
\mu_N(z) = \mathbb E \frac{1}{N} \sum_{i=1}^N \delta(\lambda_i-z),
\end{align*}
where the expectation is taken with respect to the given ensemble.  For technical reasons we let $M = M(N)$ and $d_N := N/M$ satisfy $\lim_{N \to \infty} d_N =:d \in (0,1)$.  More specifically, we consider $M = \lfloor N/d \rfloor$.

\begin{remark}
  The case where $\lim_{N \to \infty} d_N =1$ is of considerable interest:  If $M = N + R$ then it is known that the limiting distribution of the smallest eigenvalue is given in terms of the so-called Bessel kernel \cite{BenArous2005,Forrester1993} when $X_{ij}$ has Gaussian divisible entries. If $R \to \infty$, $R \leq  CN^{1/2}$ and $X_{ij}$ are standard complex normal random variables then it is known that the smallest eigevalue has Tracy--Widom fluctuations \cite{Deift2015}.  It is noted in \cite[Section 1.4]{Pillai2014} that establishing all estimates we use below in the $\lim_{N \to \infty} d_N =1$ case is a difficult problem.  In light of the current work, this is a particularly interesting problem as it would give different scalings for the halting times.
\end{remark}

Define the Marchenko--Pastur law
\begin{align}\label{e:MP}
  \rho_d(x) := \frac{1}{2 \pi d} \sqrt{\frac{[(\lambda_+ - x)(x-\lambda_-)]_+}{x^2}}, \quad
  \lambda_\pm &= (1 \pm \sqrt{d})^2,
\end{align}
and $[\cdot]_+$ denotes the positive part.  For SCMs, $\mu_N$ converges to $\rho_d(x) \D x$ weakly and $\rho_d(x) \D x$ is called the \emph{equilibrium measure} for the ensemble (see, for example, \cite{Marcenko1967,Pillai2014,Silverstein1986,Wachter1978,Yin1986}).

\begin{definition}\label{d:quant}
Define $\gamma_n$ to be the smallest value of $t$ such that
\begin{align*}
\frac{n}{N} = \int_{-\infty}^t \rho_d(x) \D x, \quad n = 1,2,\ldots,N.
\end{align*}
\end{definition}
Thus $\{\gamma_n\}$ represent the quantiles of the equilibrium measure.  We now describe conditions on the matrices that simplify the analysis of the algorithms QR, P and IP.

\begin{condition} \label{cond:uppergap} For $0 < p < \sigma/4$,
\begin{itemize}
  \item $\ds \frac{\lambda_{N-2}}{\lambda_{N-1}} < \left(  \frac{\lambda_{N-1}}{\lambda_N} \right)^p$.
\end{itemize}
Let $\mathcal U_{N,p}$ denote the set of matrices that satisfy this condition.
\end{condition}

\begin{condition} \label{cond:lowergap} For $0 < p < \sigma/4$,
\begin{itemize}
 \item $\ds \frac{\lambda_2}{\lambda_3} < \left(  \frac{\lambda_1}{\lambda_2} \right)^p$.
\end{itemize}
Let $\mathcal L_{N,p}$ denote the set of matrices that satisfy this condition.
\end{condition}

Given an SCM, let $v$ be a random (or deterministic) unit vector independent of the SCM.  Define $\beta_n = |\langle v, u_n \rangle|$, $n = 1,2,\ldots,N$ where $u_n$ is the $n$th eigenvector of the SCM.
\begin{condition}\label{cond:rigidity}
For any fixed $0<  s < \sigma/40$,
\begin{enumerate}
\item $\beta_n \leq N^{-1/2+s/2}$ for all $n$
\item $N^{-1/2-s/2} \leq \beta_n$ for $n = 1,2,N-1,N$,
\item $N^{-2/3-s/2} \leq \lambda_N - \lambda_{n-1} \leq N^{-2/3+s/2}$, for $n = N, N-1$,
\item $N^{-2/3-s/2} \leq \lambda_n - \lambda_{1} \leq N^{-2/3+s/2}$, for $n = 2, 3$, and
\item $|\lambda_n - \gamma_n| \leq N^{-2/3+s/2}( \min\{n,N-n+1\})^{-1/3}$ for all $n$.
\end{enumerate}
Let $\mathcal R_{N,s}$ denote the set of matrices that satisfy these conditions.
\end{condition}

\begin{remark}\label{r:quantiles}
Clearly the quantiles $\{\gamma_n\}$ lie in the interval $(\lambda_-, \lambda_+)$. Property (5) above implies, in particular, that for $N$ sufficiently large,
the eigenvalues $\{\lambda_n\}$ of matrices in $\mathcal R_{N,s}$ lie  in the interval $(\lambda_- - \eta, \lambda_+ + \eta)$ for any given $\eta>0$.
\end{remark}

The analysis of the eigenvalues of sample covariance matrices has a long history, beginning with the work of Mar\u{c}enko and Pastur \cite{Marcenko1967}.  The seminal work of Geman \cite{Geman1980} showed that for $M,N \to \infty$, $N/M \to y \in (0,\infty)$, the largest eigenvalue of an SCM converges a.s. to $\lambda_+$.  Silverstein \cite{Silverstein1985} established that for $M,N \to \infty$, $N/M \to y \in (0,1)$ the smallest eigenvalue converges  a.s. to $\lambda_-$ when $V_{ij}$ are iid standard normal random variables.  See \cite{Forrester1993,Johansson2000,Johnstone2001} for the first results on the fluctuations of the largest and smallest eigenvalues when $V_{ij}$ are iid (real or complex) standard normal distributions.  Universality for the eigenvalues of $\frac{1}{N} V^*V$ at the edges and in the bulk, was first proved by Ben Arous an Pech\'e \cite{BenArous2005} for Gaussian divisible ensembles, in the limit $N,M \to \infty$, $M = N + \nu$, $\nu$ fixed.  We reference \cite{Pillai2014} and \cite{Bloemendal2016} for the most comprehensive results.   Note that we require \eqref{e:subexp} which is stronger than the assumptions in \cite{Geman1980,Yin1986} which only require moment conditions.  Various limits of the eigenvectors have also been considered, see \cite{Bai2007,Silverstein1986}.  But we reference \cite{Bloemendal2016} for the full generality we need to prove our theorems.

\begin{theorem}\label{t:gap-limit}
  For SCMs
\begin{align*}
  N^{2/3}\lambda_+^{-2/3}d^{1/2}(\lambda_+ - \lambda_N, \lambda_+ - \lambda_{N-1}, \lambda_+-\lambda_{N-2})
  \end{align*}
   and
   \begin{align*}
      \quad N^{2/3}\lambda_-^{-2/3}d^{1/2}(\lambda_1 - \lambda_-, \lambda_2 - \lambda_{-}, \lambda_3-\lambda_{-})
\end{align*}
separately converge jointly in distribution to random variables $(\Lambda_{1,\beta},\Lambda_{2,\beta},\Lambda_{3,\beta})$ which are the smallest three eigenvalues of the so-called stochastic Airy operator. Furthermore, $(\Lambda_{1,\beta},\Lambda_{2,\beta},\Lambda_{3,\beta})$ are distinct with probability one.
\end{theorem}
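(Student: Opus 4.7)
The plan is to deduce Theorem~\ref{t:gap-limit} in three stages: reduce the problem to the Gaussian Wishart case by edge universality, identify the edge limit with the stochastic Airy operator (SAO) via tridiagonal models, and invoke the known spectral theory of the SAO to establish distinctness of the three limiting eigenvalues.

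First, I would apply the joint edge universality for sample covariance matrices established by Pillai--Yin \cite{Pillai2014} and Bloemendal--Erd\H{o}s--Knowles--Yau--Yin \cite{Bloemendal2016}. Under the subexponential decay hypothesis \eqref{e:subexp} and the ratio condition $d_N \to d \in (0,1)$, these results assert that the joint distribution of any fixed number of rescaled extreme eigenvalues of an SCM has a limit that depends only on the symmetry class $\beta \in \{1,2\}$, and not on the particular laws $\nu_{ij}$. In particular, if the joint limit is identified for one convenient choice of entry distribution, the theorem follows for all admissible ensembles. I would take the Gaussian choice, so that $H$ is a real or complex Wishart matrix; the condition $d < 1$ is exactly what ensures that a genuine soft edge persists at $\lambda_-$, so the lower-edge statement is a bona fide soft-edge Tracy--Widom-type statement.

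Second, for Gaussian Wishart I would invoke the Dumitriu--Edelman bidiagonalization together with the main convergence theorem of Ram\'{\i}rez--Rider--Vir\'{a}g \cite{Ramirez2011}. The real/complex Wishart matrix $V^*V$ is unitarily (orthogonally) equivalent to an explicit tridiagonal matrix whose entries are independent $\chi$ variables, and \cite{Ramirez2011} proves that, for each fixed $k$, the top $k$ eigenvalues of such tridiagonal $\beta$-models, after the soft-edge rescaling, converge jointly in distribution to the bottom $k$ eigenvalues of the SAO. Applying this at the upper edge $\lambda_+$ yields the first joint convergence in the theorem, with the scaling constants determined by the local expansion $\rho_d(x) \sim \frac{1}{\pi d^{3/4}\lambda_+}\sqrt{\lambda_+ - x}$ of the Marchenko--Pastur density. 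The lower-edge statement follows by the analogous argument for the smallest eigenvalues, either by analyzing the tridiagonal model near its smallest eigenvalues or by exploiting the symmetry of the MP soft-edge expansion, with scaling derived from $\rho_d(x) \sim \frac{1}{\pi d^{3/4}\lambda_-}\sqrt{x - \lambda_-}$ at $\lambda_-$.

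Finally, I would establish the distinctness of $(\Lambda_{1,\beta},\Lambda_{2,\beta},\Lambda_{3,\beta})$ by citing \cite{Ramirez2011}, where the SAO is shown to have almost surely discrete and simple spectrum for every $\beta > 0$. The main obstacle I anticipate is the combination of two technicalities: quoting the edge-universality results in the precise joint form needed for three consecutive eigenvalues at each edge rather than the more commonly stated single-eigenvalue form, and handling the lower edge, since most explicit statements in the literature are formulated for the largest eigenvalue. Both points are essentially contained in \cite{Bloemendal2016,Pillai2014,Ramirez2011}, but require care in translating conventions and in matching the $\lambda_\pm^{-2/3}d^{1/2}$ normalization at each edge.
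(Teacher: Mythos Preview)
Your proposal is correct and essentially matches the paper's proof, which simply cites \cite[Theorem~8.3]{Bloemendal2016} for the upper edge, \cite[Theorem~1.1 and Corollary~1.2]{Pillai2014} for the lower edge, and \cite[Theorem~1.1]{Ramirez2011} for simplicity of the SAO spectrum. The only difference is presentational: you decompose the argument into ``universality reduces to Gaussian'' followed by ``tridiagonal model $\to$ SAO,'' whereas the paper invokes the end results of \cite{Bloemendal2016,Pillai2014} directly, since those theorems already package both steps and deliver the joint SAO limit at each edge in one stroke.
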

\begin{proof}
The first statement follows from \cite[Theorem~8.3]{Bloemendal2016}.  The second statement follows from \cite[Theorem 1.1 \& Corollary 1.2]{Pillai2014}.  The fact that the eigenvalues of the stochastic Airy operator are distinct is shown in \cite[Theorem 1.1]{Ramirez2011}.
\end{proof}

\begin{definition}\label{def:Fbeta}
The distribution function $F^\mathrm{gap}_\beta(t)$, supported on $t \geq 0$ for $\beta =1,2$ is given by
\begin{align*}
  F^\mathrm{gap}_\beta(t) = \mathbb P\left( \frac{1}{\Lambda_{2,\beta}-\Lambda_{1,\beta}} \leq t \right) &= \lim_{N \to \infty} \mathbb P\left( \frac{1}{2^{-7/6}N^{2/3}\lambda_+^{-2/3}d^{-1/2}(\lambda_N-\lambda_{N-1})} \leq t \right)\\
   & =  \lim_{N \to \infty} \mathbb P\left( \frac{1}{2^{-7/6}N^{2/3}\lambda_-^{-2/3}d^{-1/2}(\lambda_2-\lambda_{1})} \leq t \right).
\end{align*}
\end{definition}

The remaining theorems in this section are compiled from results that have been obtained recently in the literature.  We use a simple lemma (see, for example, \cite[Lemma 3.2]{Deift2016}):
\begin{lemma}\label{l:high}
  If $X_N \to X$ in distribution\footnote{For convergence in distribution, we require that the limiting random variable $X$ satisfies $\mathbb P(|X|< \infty) = 1$.} as $N \to \infty$ then for any $R > 0$
  \begin{align*}
    \mathbb P(|X_N/a_N| < R) = 1 + o(1)
  \end{align*}
  as $N \to \infty$ provided that $a_N \to \infty$.
\end{lemma}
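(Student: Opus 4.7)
The plan is to deduce the claim from tightness of $\{X_N\}$. Since $X_N \to X$ in distribution with $\mathbb P(|X| < \infty) = 1$, Prokhorov's theorem (or direct argument from the Portmanteau theorem applied to the open sets $\{|x|>K\}$) gives that the family $\{X_N\}$ is tight. Concretely, for any $\eta > 0$ I would pick a continuity point $K = K(\eta) > 0$ of the distribution of $|X|$ such that $\mathbb P(|X|>K) < \eta/2$, and then use convergence in distribution of $|X_N|$ to $|X|$ (which follows from convergence of $X_N$ to $X$ by the continuous mapping theorem) to conclude that $\mathbb P(|X_N|>K) < \eta$ for all $N$ sufficiently large. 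By enlarging $K$ if necessary (using that each $X_N$ is itself a.s. finite), one arranges $\sup_N \mathbb P(|X_N|>K) < \eta$.

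Next I would exploit $a_N \to \infty$. Fix $R > 0$ and $\eta > 0$; with $K$ as above, choose $N_0$ so large that $a_N R > K$ for every $N \geq N_0$. Then for all $N \geq N_0$ one has
\begin{equation*}
\mathbb P(|X_N/a_N| \geq R) = \mathbb P(|X_N| \geq a_N R) \leq \mathbb P(|X_N| \geq K) < \eta.
\end{equation*}
Taking complements gives $\mathbb P(|X_N/a_N| < R) > 1 - \eta$ for $N \geq N_0$. Since $\eta > 0$ is arbitrary and the probability is always at most $1$, this is exactly the statement $\mathbb P(|X_N/a_N| < R) = 1 + o(1)$.

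There is no real obstacle: the only subtle point is justifying tightness from convergence in distribution when the limit is a.s.\ finite, which is a standard consequence of the Portmanteau theorem applied to the half-lines $(K,\infty)$ and $(-\infty,-K)$ at continuity points of the law of $X$. Everything else is a direct estimate using $a_N \to \infty$.
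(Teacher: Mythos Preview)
Your argument is correct. The paper does not actually supply a proof of this lemma; it simply cites \cite[Lemma~3.2]{Deift2016} and moves on. Your tightness-based argument is the standard one and is exactly what one would expect: convergence in distribution to an a.s.\ finite limit gives tightness of $\{|X_N|\}$, and then $a_N \to \infty$ lets you absorb any fixed tightness threshold $K$ into $a_N R$ for large $N$. One cosmetic point: in your displayed chain you write $\mathbb P(|X_N| \geq K)$ where you had established the bound for $\mathbb P(|X_N| > K)$; since $a_N R > K$ implies $\{|X_N| \geq a_N R\} \subseteq \{|X_N| > K\}$, you should route the inequality through the strict version, but this changes nothing substantive.
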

\begin{theorem}\label{t:generic}
For SCMs, Condition~\ref{cond:rigidity} holds with high probability as $N \to \infty$, that is, for any $s > 0$
\begin{align*}
\mathbb P(\mathcal R_{N,s}) = 1 + o(1),
\end{align*}
as $N \to \infty$.
\end{theorem}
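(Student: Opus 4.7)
The plan is to verify each of the five properties constituting $\mathcal{R}_{N,s}$ separately, using distinct results from the random matrix theory literature on sample covariance matrices, and then to combine them by a union bound over a finite collection of events.

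Properties (1) and (5) are bulk-type statements and follow from \cite{Pillai2014,Bloemendal2016}. Specifically, Property (5) is the standard eigenvalue rigidity estimate for SCMs, stating that $|\lambda_n - \gamma_n| \leq N^{-2/3+\varepsilon}(\min\{n,N-n+1\})^{-1/3}$ holds with probability $1-o(1)$ for any fixed $\varepsilon>0$; we simply take $\varepsilon = s/2$. Property (1) is the isotropic delocalization bound $\max_n \beta_n \leq N^{-1/2+\varepsilon}$, applied to the coordinate direction $e_N$ in the QR case and to the independent random unit vector $v$ in the power-method cases.

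Properties (3) and (4) follow from Theorem~\ref{t:gap-limit} combined with Lemma~\ref{l:high}. Taking for instance $X_N := 2^{-7/6} N^{2/3}\lambda_+^{-2/3}d^{1/2}(\lambda_N - \lambda_{N-1})$, Theorem~\ref{t:gap-limit} asserts $X_N \to \Lambda_{2,\beta}-\Lambda_{1,\beta}$ in distribution, with the limit a.s.\ in $(0,\infty)$ by the distinctness of the stochastic Airy eigenvalues. Applying Lemma~\ref{l:high} with $a_N = N^{s/2}$ to both $X_N$ and $1/X_N$ gives $N^{-s/2} \leq X_N \leq N^{s/2}$ with probability $1-o(1)$; after absorbing the constant prefactor into $s$ (permissible for $N$ large, at the cost of slightly shrinking $s$) this yields the two-sided bound in (3). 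The remaining three gaps $\lambda_N - \lambda_{N-2}$, $\lambda_2-\lambda_1$, and $\lambda_3-\lambda_1$ are handled identically, using the joint distributional convergence asserted in Theorem~\ref{t:gap-limit}.

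The main obstacle is Property (2), the anti-concentration $\beta_n \geq N^{-1/2-s/2}$ for $n\in\{1,2,N-1,N\}$: the delocalization upper bound does not, on its own, prevent $\beta_n$ from being much smaller than $N^{-1/2}$. The approach is to invoke the joint distributional convergence of the edge eigenvalues and eigenvector coordinates established in \cite{Bloemendal2016}, which gives that $\sqrt{N}\beta_n$ converges to a non-degenerate random variable supported on $(0,\infty)$ for each such $n$. In the fixed-coordinate case relevant to QR the limit can be described in terms of the edge eigenfunction of the stochastic Airy operator; in the independent random-vector case relevant to the (inverse) power method the limit is Gaussian, as derived in Appendix~\ref{sec:proj}. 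In either case, applying Lemma~\ref{l:high} to $(\sqrt{N}\beta_n)^{-1}$ delivers the required lower bound, and a union bound over the $O(1)$ indices in Properties (2)--(4) together with the uniform statements (1) and (5) concludes the proof.
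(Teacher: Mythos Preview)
Your proposal is correct and follows essentially the same route as the paper: Properties~(1) and~(5) from delocalization and rigidity in \cite{Pillai2014,Bloemendal2016}, Properties~(3)--(4) from Theorem~\ref{t:gap-limit} combined with Lemma~\ref{l:high}, and Property~(2) from distributional convergence of $\sqrt{N}\beta_n$ together with Lemma~\ref{l:high}. The paper simply packages Properties~(1)--(2) into Proposition~\ref{p:componentbound}, whose proof is exactly what you describe.

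One small inaccuracy worth flagging: in the fixed-coordinate (QR) case you suggest the limit of $\sqrt{N}\beta_n$ is described via eigenfunctions of the stochastic Airy operator. In fact the paper (Theorem~\ref{t:normalprojection}) shows the limit is $|G_\beta|$, a Gaussian modulus, in \emph{both} the deterministic and random $v$ cases: one first transfers to the Wishart ensemble by the eigenvector universality of \cite{Bloemendal2016}, and there the eigenvectors are Haar-distributed, so $\sqrt{N}\langle v,u_j\rangle$ is asymptotically Gaussian regardless of $v$. This does not affect your argument, since you only use that the limit is non-degenerate and supported in $(0,\infty)$, but the Airy description is not needed (and would apply to the full eigenvector profile, not a single coordinate).
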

\begin{proof}
It suffices to show that each of the sub-conditions 1-5 in Condition~\ref{cond:rigidity} hold with high probability.  Conditions~\ref{cond:rigidity}.1-2 hold with high probability directly by Proposition~\ref{p:componentbound}. Conditions~\ref{cond:rigidity}.3-4 hold with high probability by the joint convergence of the top (bottom) three eigenvalues in Theorem~\ref{t:gap-limit} and Lemma~\ref{l:high}.  Finally, Condition~\ref{cond:rigidity}.5 holds with high probability as a direct consequence of \cite[Theorem 3.3]{Pillai2014}.
\end{proof}

\begin{theorem}\label{t:p}
For SCMs,
\begin{align*}
\lim_{p \downarrow 0}\limsup_{N \to \infty} \mathbb P(\mathcal U_{N,p}^c)  = \lim_{p \downarrow 0}\limsup_{N \to \infty} \mathbb P(\mathcal L_{N,p}^c) = 0.
\end{align*}
\end{theorem}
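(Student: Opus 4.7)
The plan is to establish the claim for $\mathcal U_{N,p}$; the statement for $\mathcal L_{N,p}$ follows by an identical argument applied to the bottom-edge joint convergence in Theorem~\ref{t:gap-limit}. First I would reduce the defining inequality $\lambda_{N-2}/\lambda_{N-1} < (\lambda_{N-1}/\lambda_N)^p$ to a linear comparison of eigenvalue gaps. Taking logarithms, $\mathcal U_{N,p}$ is the event
\begin{equation*}
\log(\lambda_{N-1}/\lambda_{N-2}) > p\log(\lambda_N/\lambda_{N-1}),
\end{equation*}
and a Taylor expansion $\log(1+x)=x+\bigo(x^2)$, together with the facts (from Theorem~\ref{t:gap-limit} and Lemma~\ref{l:high}) that $\lambda_N-\lambda_{N-1}$ and $\lambda_{N-1}-\lambda_{N-2}$ are $\bigo(N^{-2/3})$ in probability and $\lambda_{N-1},\lambda_N\to\lambda_+>0$ in probability, shows that $\mathcal U_{N,p}$ coincides, up to an in-probability additive error of order $\bigo(N^{-4/3})$, with
\begin{equation*}
\frac{\lambda_{N-1}-\lambda_{N-2}}{\lambda_{N-2}} > p\,\frac{\lambda_N-\lambda_{N-1}}{\lambda_{N-1}}.
\end{equation*}

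Next I would pass to the rescaled variables
\begin{equation*}
A_N := N^{2/3}\lambda_+^{-2/3}d^{1/2}(\lambda_N-\lambda_{N-1}),\qquad B_N := N^{2/3}\lambda_+^{-2/3}d^{1/2}(\lambda_{N-1}-\lambda_{N-2}).
\end{equation*}
By Theorem~\ref{t:gap-limit}, $(A_N,B_N)$ converges jointly in distribution to $(A,B) := (\Lambda_{2,\beta}-\Lambda_{1,\beta},\,\Lambda_{3,\beta}-\Lambda_{2,\beta})$, and $A,B>0$ almost surely since the top three stochastic-Airy eigenvalues are a.s.\ distinct. Multiplying the displayed inequality above by $N^{2/3}\lambda_+^{-2/3}d^{1/2}$ and using $\lambda_{N-1}/\lambda_+,\lambda_{N-2}/\lambda_+\to 1$ in probability, the event $\mathcal U_{N,p}$ rewrites as $\{B_N > pA_N + r_N\}$ where $r_N\to 0$ in probability (the $\bigo(N^{-4/3})$ Taylor error rescaled by $N^{2/3}$ is $\bigo(N^{-2/3})$).

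Finally I would invoke Portmanteau. For any $\delta>0$, a union bound gives $\mathbb P(\mathcal U_{N,p}^c) \leq \mathbb P(B_N \leq pA_N + \delta) + \mathbb P(|r_N|>\delta)$, and the Portmanteau theorem applied to the closed half-plane $\{(a,b)\in\mathbb R^2 : b\leq pa+\delta\}$ yields $\limsup_{N\to\infty}\mathbb P(\mathcal U_{N,p}^c) \leq \mathbb P(B\leq pA+\delta)$. Sending $\delta\downarrow 0$ by monotone convergence and then $p\downarrow 0$ gives $\mathbb P(B\leq 0)=0$, since $B>0$ almost surely. The only mildly delicate step is the Slutsky-style bookkeeping of the error term $r_N$; everything else is a direct consequence of the joint edge-universality statement of Theorem~\ref{t:gap-limit} together with the almost-sure strict positivity of consecutive gaps of the stochastic Airy spectrum.
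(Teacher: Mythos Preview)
Your proposal is correct and follows essentially the same approach as the paper: linearize the ratio condition $\lambda_{N-2}/\lambda_{N-1} < (\lambda_{N-1}/\lambda_N)^p$ to a comparison of gaps via Taylor expansion, then invoke the joint edge convergence of Theorem~\ref{t:gap-limit} together with the almost-sure distinctness of the stochastic Airy eigenvalues. The paper packages the linearization slightly differently---it defines a single quantity $\Gamma_N$ combining the linear and ratio expressions and shows $\Gamma_N\to 0$ in probability, whereas you carry out the Slutsky/Portmanteau bookkeeping explicitly with the rescaled gaps $(A_N,B_N)$---but the substance of the two arguments is identical.
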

\begin{proof}
It follows from Theorem~\ref{t:gap-limit} that
\begin{align*}
\lim_{N\to \infty} \mathbb P(\lambda_{3}-\lambda_2 < p(\lambda_2-\lambda_{1})) = \mathbb P(\Lambda_{3,\beta} -\Lambda_{2,\beta} < p(\Lambda_{2,\beta} -\Lambda_{1,\beta})).
\end{align*}
Then
\begin{align*}
\lim_{p \downarrow 0} \mathbb P(\Lambda_{3,\beta} -\Lambda_{2,\beta} < p(\Lambda_{2,\beta} -\Lambda_{1,\beta})) &= \mathbb P \left( \bigcap_{p > 0}\left\{\Lambda_{3,\beta} -\Lambda_{2,\beta} < p(\Lambda_{2,\beta} -\Lambda_{1,\beta}) \right\} \right) \\
&= \mathbb P(\Lambda_{3,\beta} = \Lambda_{2,\beta}).
\end{align*}
But from \cite[Theorem 1.1]{Ramirez2011} $\mathbb P(\Lambda_{3,\beta} = \Lambda_{2,\beta}) = 0$.   And so, it suffices to show that
\begin{align*}
\lim_{N\to \infty} \mathbb P(\lambda_{3}-\lambda_2 < p(\lambda_2-\lambda_{1}))  =  \lim_{N \to \infty}\mathbb P \left(  \frac{\lambda_2}{\lambda_3} < \left(  \frac{\lambda_1}{\lambda_2}  \right)^p \right).
\end{align*}
This will, in turn follow, if we show that
\begin{align*}
\Gamma_N := \lambda_3-\lambda_2 - p(\lambda_2-\lambda_{1}) + \lambda_-\left[   \frac{\lambda_2}{\lambda_3} - \left(  \frac{\lambda_1}{\lambda_2}  \right)^p \right]
\end{align*}
converges to zero in probability for $p$ fixed.  We set $\lambda_j = \lambda_- + N^{-2/3}\xi_j$ where $(\xi_1,\xi_2,\xi_3)$ converges jointly in distribution by Theorem~\ref{t:gap-limit}.  Let $B_R$ be the event $\| (\xi_1,\xi_2,\xi_3)\| \leq R$ and for $\delta > 0$ consider
\begin{align*}
\mathbb P ( |\Gamma_N| \geq \delta ) = \mathbb P(|\Gamma_N| \geq \delta, B_R) + \mathbb P(|\Gamma_N| \geq \delta, B_R^c).
\end{align*}
Given $B_R$, we perform a formal expansion
\begin{align*}
\frac{\lambda_2}{\lambda_3} - \left(  \frac{\lambda_1}{\lambda_2}  \right)^p = \lambda_-^{-1} N^{-2/3}(\xi_2-\xi_3) - p \lambda_-^{-1}N^{-2/3}(\xi_1- \xi_3) + \mathcal O(N^{-4/3}).
\end{align*}
Therefore, given $B_R$, $\Gamma_N$ tends to zero uniformly and we find
\begin{align*}
\limsup_{N \to \infty} \mathbb P (|\Gamma_N| \geq \delta) \leq \limsup_{N \to \infty} \mathbb P(B_R^c).
\end{align*}
Because of joint convergence (in distribution) of $(\xi_1,\xi_2,\xi_3)$, the right-hand side tends to zero as $R \to \infty$.  This establishes the result for $\mathcal L_{N,p}$.  Similar considerations yield the result for $\mathcal U_{N,p}$.
\end{proof}

\section{A numerical demonstration}\label{sec:numerics}

We include some numerical simulations that serve to demonstrate Theorem~\ref{t:main}. We include ideas that were discussed in detail in \cite{Deift2016}.  En route to proving Theorem~\ref{t:main} we perform the following approximation step for A = QR, IP or P
\begin{align*}
\tau_{\mathrm{A},\epsilon} = \underbrace{\tau_{\mathrm{A},\epsilon} - T_{\mathrm{A},\epsilon}}_{:=D_1}  + \underbrace{T_{\mathrm{A},\epsilon} - T^*_{\mathrm{A},\epsilon}}_{:=D_2} + T^*_{\mathrm{A},\epsilon}.
\end{align*}
where $T^*_{\mathrm{A},\epsilon}$ is given in \eqref{e:t-starQR} and \eqref{e:t-starIP} below.  The difference $D_1$ is always less than unity and the difference $D_2$ is $\mathcal O(N^{2/3})$ (see Proposition~\ref{p:conv}, for example). Then $T^*_{\mathrm{A},\epsilon}$ converges in distribution, after rescaling, to $F_\beta^{\mathrm{gap}}$ but it is clear from the proof of Theorem~\ref{t:Power-tech} that the rate of covergence is logarithmic, at best.  To improve the rate we note that
\begin{align}\label{e:main-correction}
F_\beta^{\mathrm{gap}}(t) &= \lim_{N \to \infty} \mathbb P \left( \frac{\displaystyle \tau_{\mathrm{A},\epsilon}(H)}{ \displaystyle  2^{-7/6} \lambda_\pm^{1/3} d^{-1/2} N^{2/3}  (\log \epsilon^{-1} - 2/3 \log N + \zeta_\mathrm{A})}  \leq t\right),
\end{align}
for \emph{any} constant $\zeta_A$.  Here $\lambda_+$ is taken if A = P and $\lambda_-$ is taken if A = QR, IP.  We choose $\zeta_{\mathrm{QR}}$ (cf. with $\zeta$ chosen in \cite{Deift2016}), using \eqref{e:t-starQR}, by
\begin{align*}
\zeta_{\mathrm{QR}} = \mathbb E[\log N^{2/3} (\lambda_2 - \lambda_1)].
\end{align*}
After examining \eqref{e:t-starIP}, we choose
\begin{align*}
\zeta_{\mathrm{IP}} = \mathbb E[\log N^{2/3} (\lambda_2 - \lambda_1)] - 3/2 \log \lambda_- + 1/2 \log 2.
\end{align*}
Then changing $\lambda_2 \to \lambda_{N-1}^{-1}$ and $\lambda_1 \to \lambda_N^{-1}$  in \eqref{e:t-starIP} we choose
\begin{align*}
\zeta_{\mathrm{P}} = \mathbb E[\log N^{2/3} (\lambda_N - \lambda_{N-1})] - 1/2 \log \lambda_+ + 1/2 \log 2.
\end{align*}
Despite the fact that these $\zeta_{\mathrm{A}}$'s are not constant, from Theorem~\ref{t:gap-limit} one should expect they have well-defined limits as $N \to \infty$.  These effective constants can be easily approximated by sampling the associated matrix distributions.

In Figure~\ref{f:QRdemon} we demonstrate \eqref{e:main-correction} and hence Theorem~\ref{t:main} for the QR algorithm.  Figures~\ref{f:IPdemon} and \ref{f:Pdemon} demonstrate the analogous results for the inverse power method and power method, respectively. The ensembles we use are the following:
\begin{itemize}
\item[LOE]: $V$ (in Definition~\ref{def:WE} below) has iid standard real Gaussian entries,
\item[LUE]: $V$ has iid standard complex Gaussian entries,
\item[BE]: $V$ has iid mean-zero, variance-one Bernoulli entries ($\pm 1$ with equal probability),
\item[CBE]: $V$  has iid mean-zero, variance-one complex Bernoulli entries ( $\{a,-a, \bar a, -\bar a\}$, $a = (1+i)/2$, with equal probability)
\end{itemize}
The density $\frac{d}{dt} F_1^{\mathrm{gap}}(t)$ was computed by the authors in \cite{Deift2016}.  We sample the matrix distributions for $N$ large and use appropriate interpolation.  The density $\frac{d}{dt} F_2^{\mathrm{gap}}(t)$ was computed in \cite{Witte2013} (and rescaled in \cite{Deift2016}) and the data to reproduce it here was provided by the authors of that work.

\begin{figure}[tbp]
\subfigure[]{\includegraphics[width=.49\linewidth]{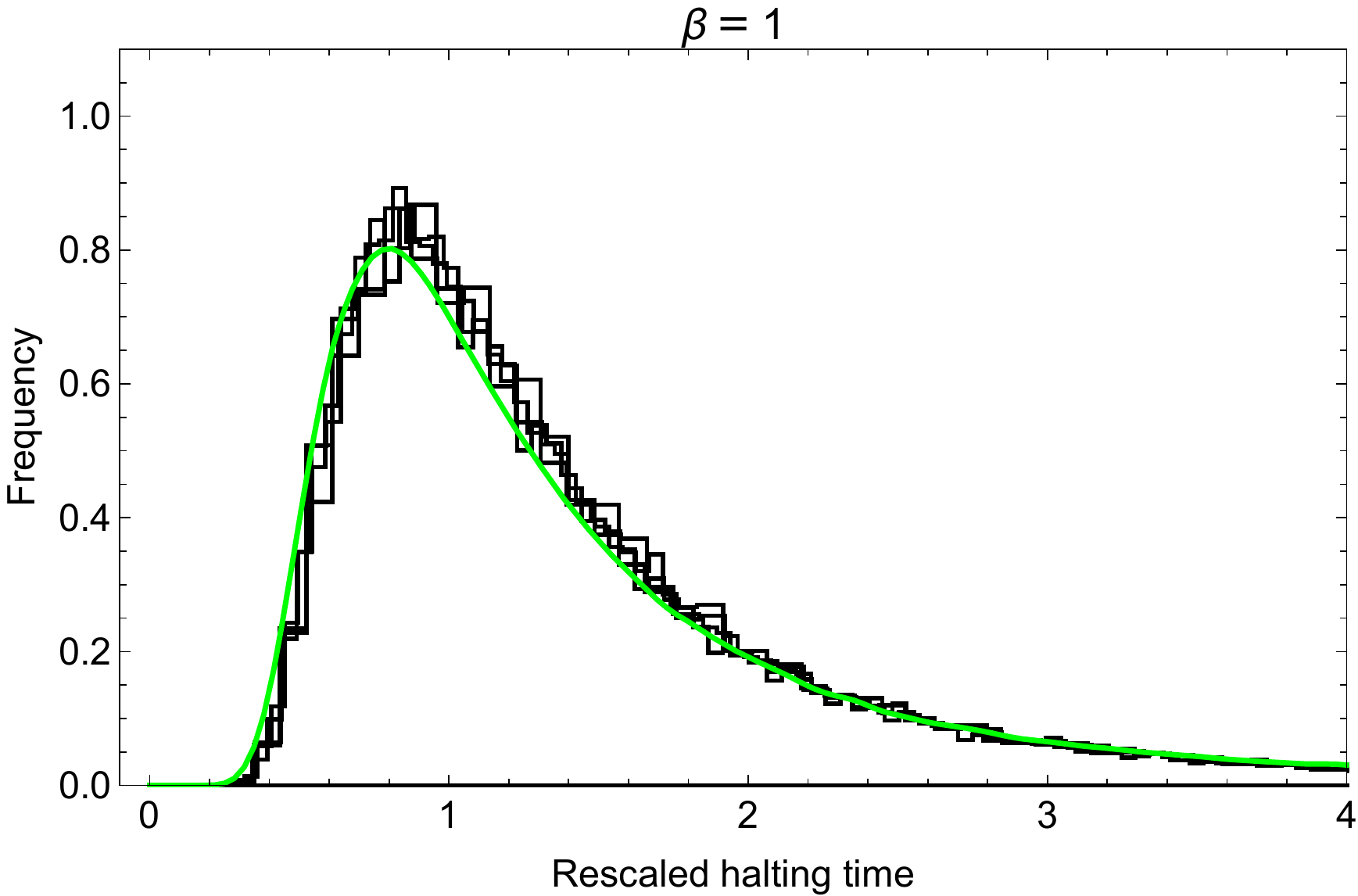}}
\subfigure[]{\includegraphics[width=.49\linewidth]{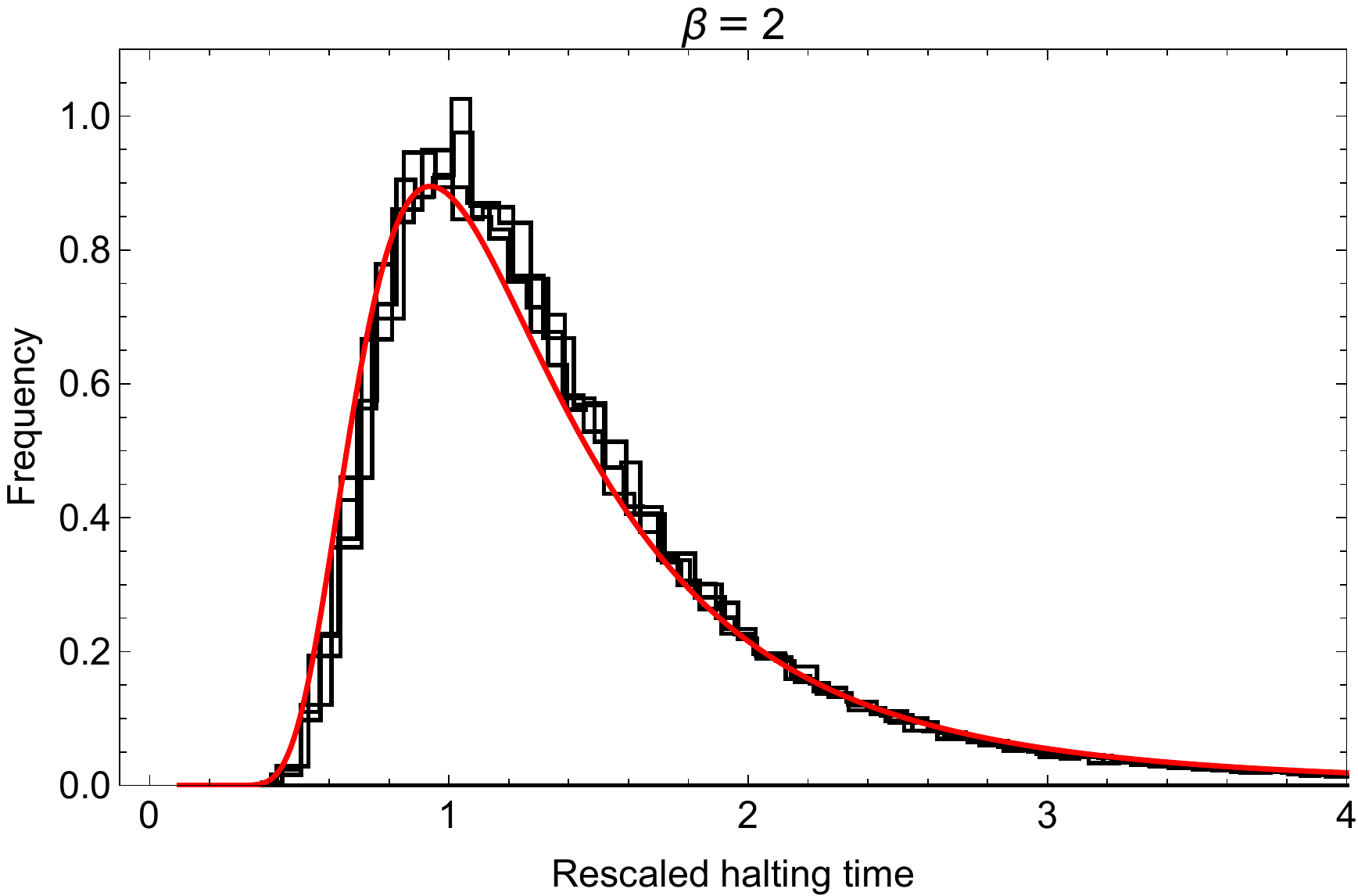}}
\caption{\label{f:QRdemon} A demonstration of Theorem~\ref{t:main} and \eqref{e:main-correction} for the QR algorithm. (a) The rescaled halting times following \eqref{e:main-correction} for LOE and BE for $d =1/2$ and $d = 2/3$ plotted against $\frac{d}{dt} F_1^{\mathrm{gap}}(t)$. (b) The rescaled halting times following \eqref{e:main-correction} for LUE and CBE for $d =1/2$ and $d = 2/3$ plotted against $\frac{d}{dt} F_2^{\mathrm{gap}}(t)$.}
\end{figure}

\begin{figure}[tbp]
\subfigure[]{\includegraphics[width=.49\linewidth]{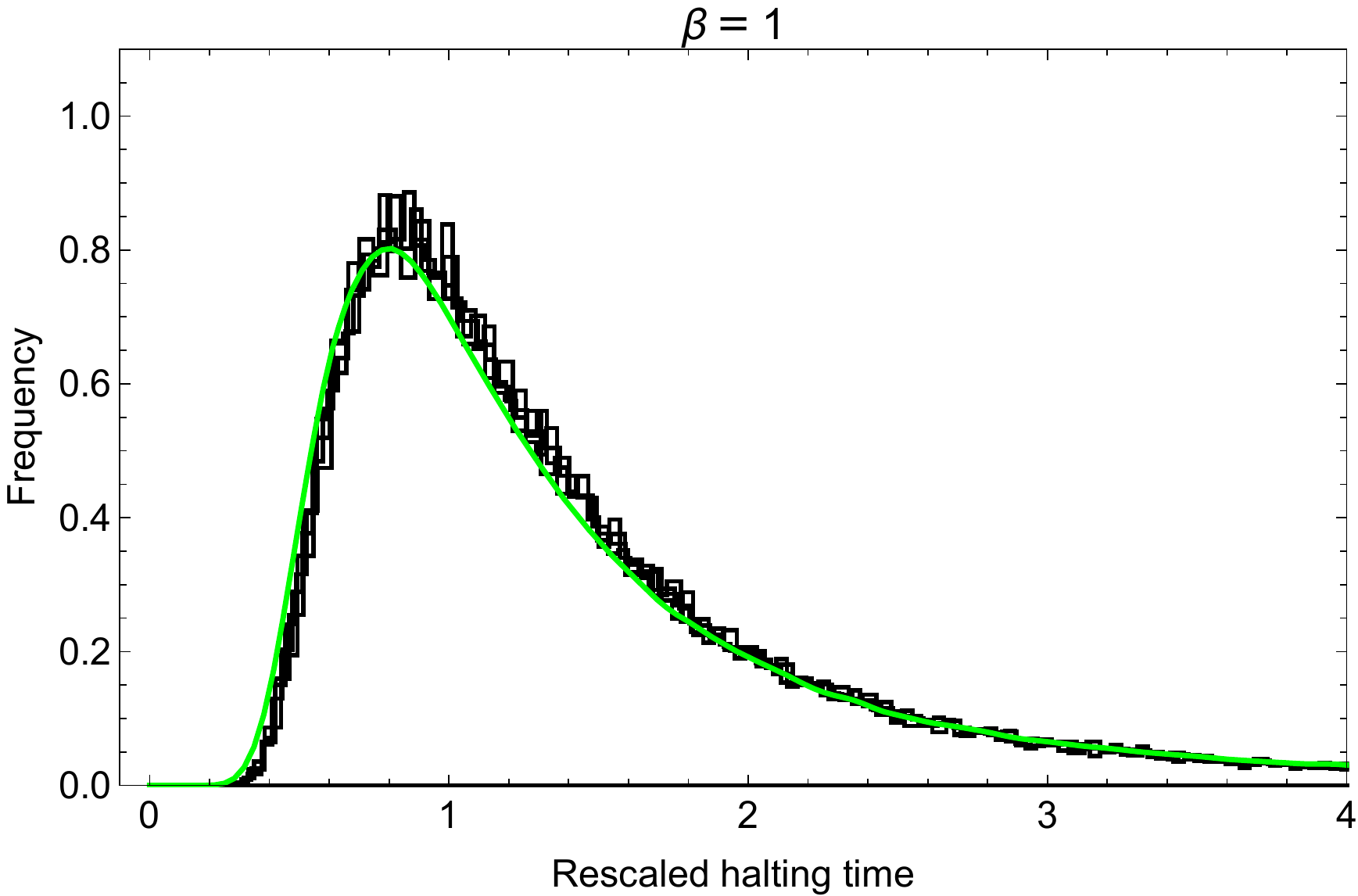}}
\subfigure[]{\includegraphics[width=.49\linewidth]{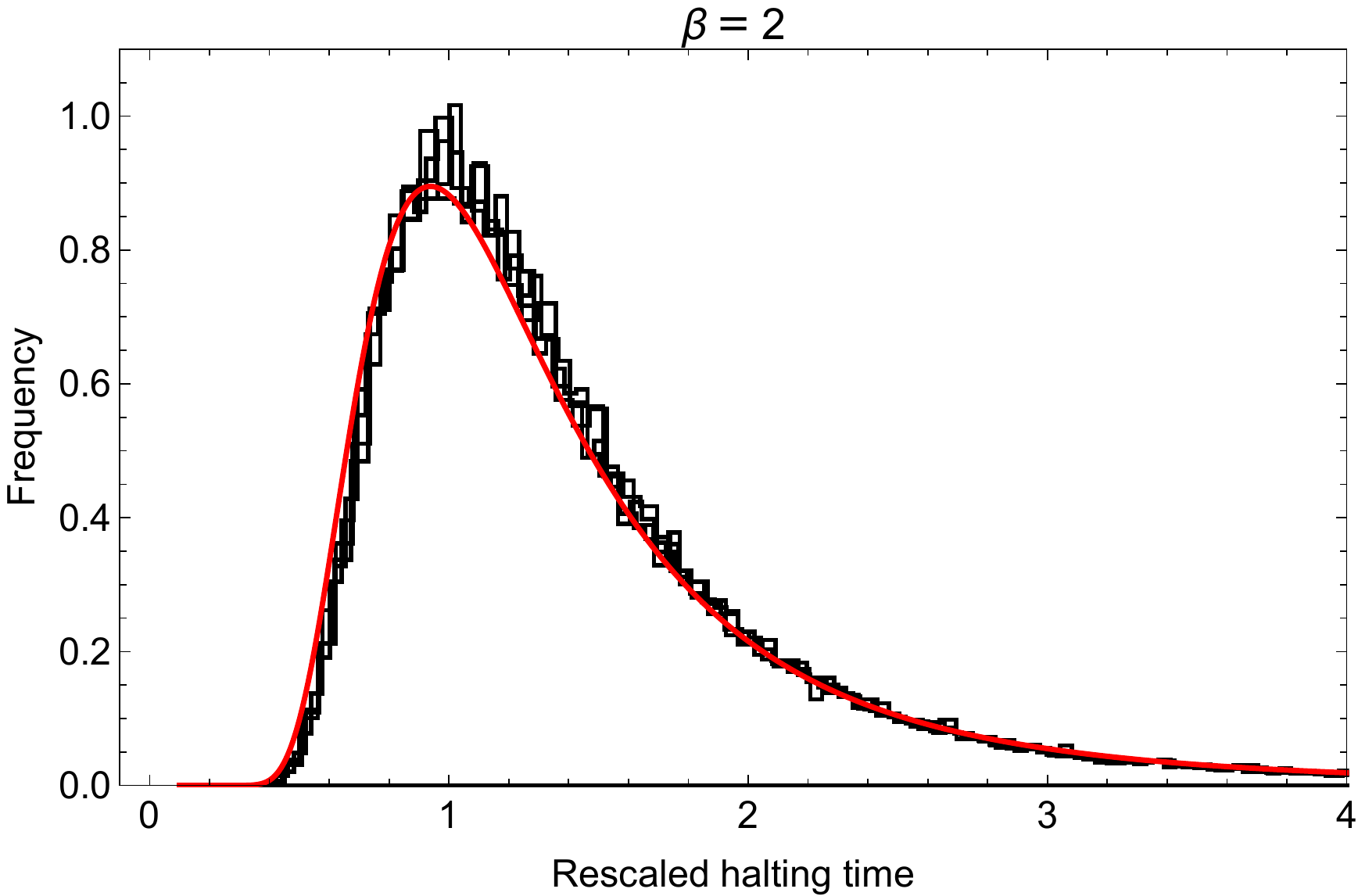}}
\caption{\label{f:IPdemon} A demonstration of Theorem~\ref{t:main} and \eqref{e:main-correction} for the inverse power method. (a) The rescaled halting times following \eqref{e:main-correction} for LOE and BE for $d =1/2$ and $d = 2/3$ plotted against $\frac{d}{dt} F_1^{\mathrm{gap}}(t)$. (b) The rescaled halting times following \eqref{e:main-correction} for LUE and CBE for $d =1/2$ and $d = 2/3$ plotted against $\frac{d}{dt} F_2^{\mathrm{gap}}(t)$.}
\end{figure}

\begin{figure}[tbp]
\subfigure[]{\includegraphics[width=.49\linewidth]{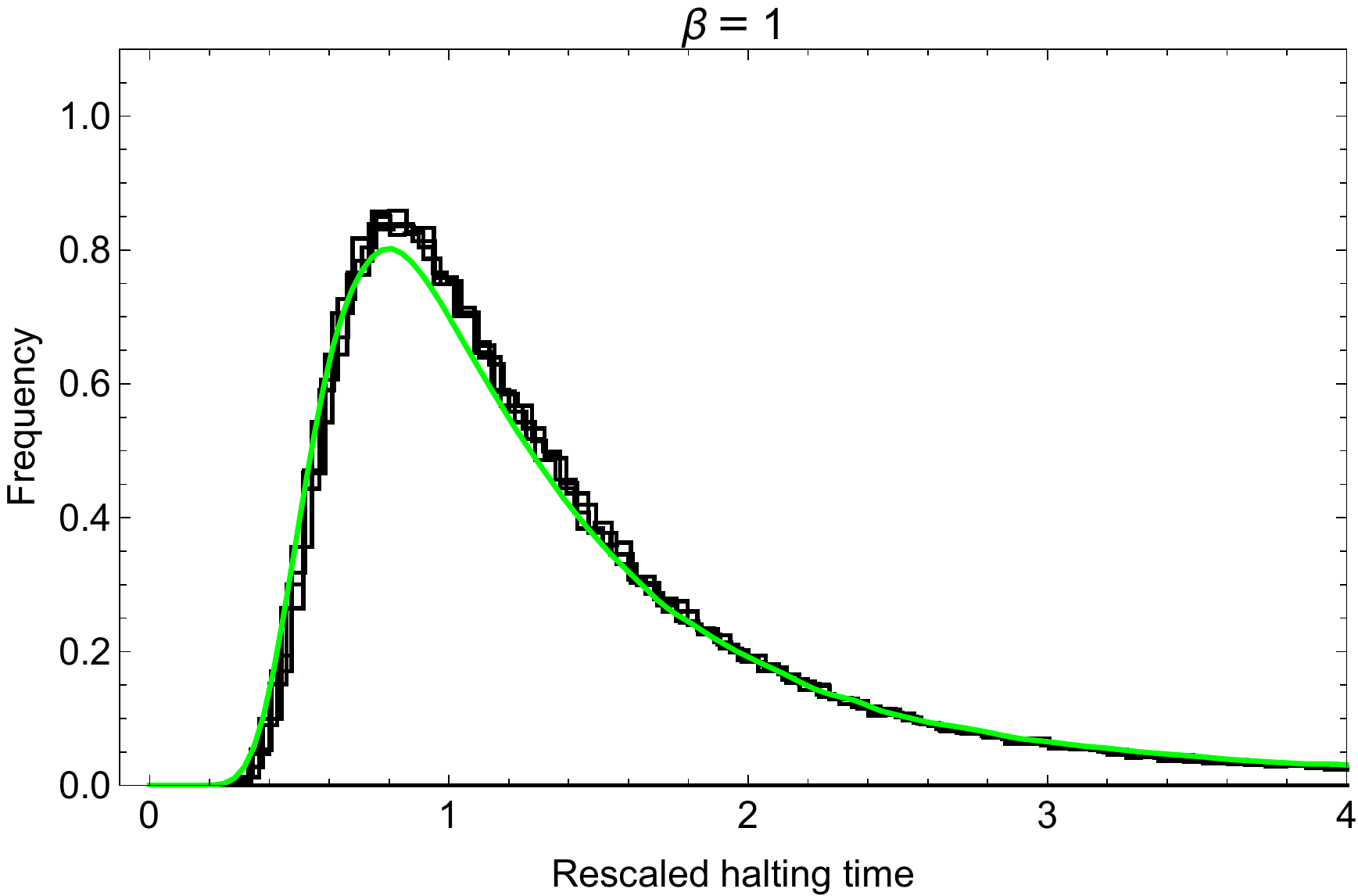}}
\subfigure[]{\includegraphics[width=.49\linewidth]{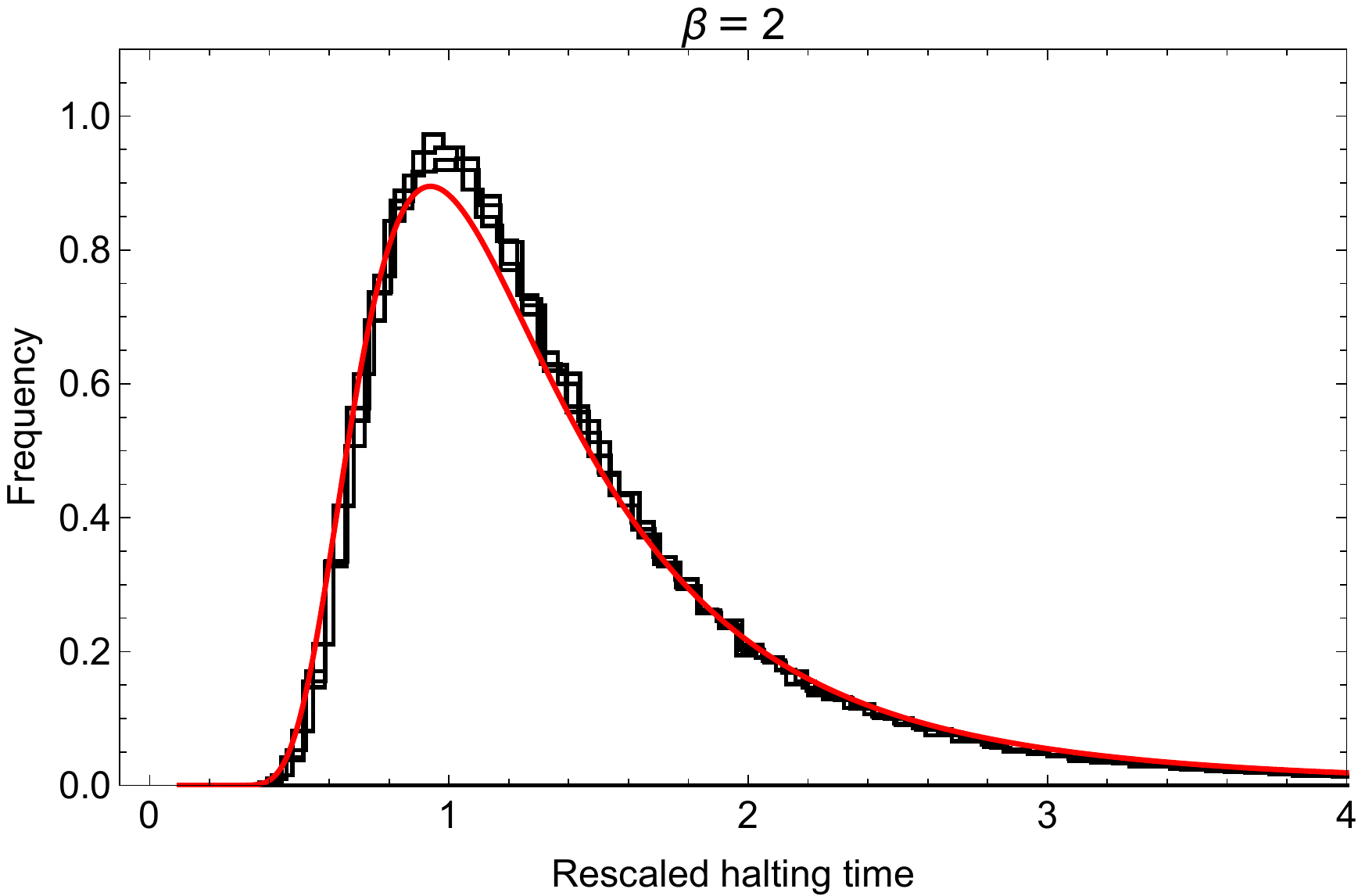}}
\caption{\label{f:Pdemon} A demonstration of Theorem~\ref{t:main} and \eqref{e:main-correction} for the power method. (a) The rescaled halting times following \eqref{e:main-correction} for LOE and BE for $d =1/2$ and $d = 2/3$ plotted against $\frac{d}{dt} F_1^{\mathrm{gap}}(t)$. (b) The rescaled halting times following \eqref{e:main-correction} for LUE and CBE for $d =1/2$ and $d = 2/3$ plotted against $\frac{d}{dt} F_2^{\mathrm{gap}}(t)$.}
\end{figure}

Finally, in Figure~\ref{f:QRdef} we show the statistics of the time of first deflation, as defined in the introduction, for LOE and BE when $d = 2$. This demonstrates universality for the time of first deflation but the limiting distribution (whatever it may be!) is clearly distinct from both histograms in Figure~\ref{f:QRdeflation} and the limiting distribution in Theorem~\ref{t:main}.   And so, computing the limiting distribution for the rescaled time of first deflation requires information about much more than just the $(N-1)$-deflation time.

\begin{figure}[tbp]
\centering
\includegraphics[width = .49\linewidth]{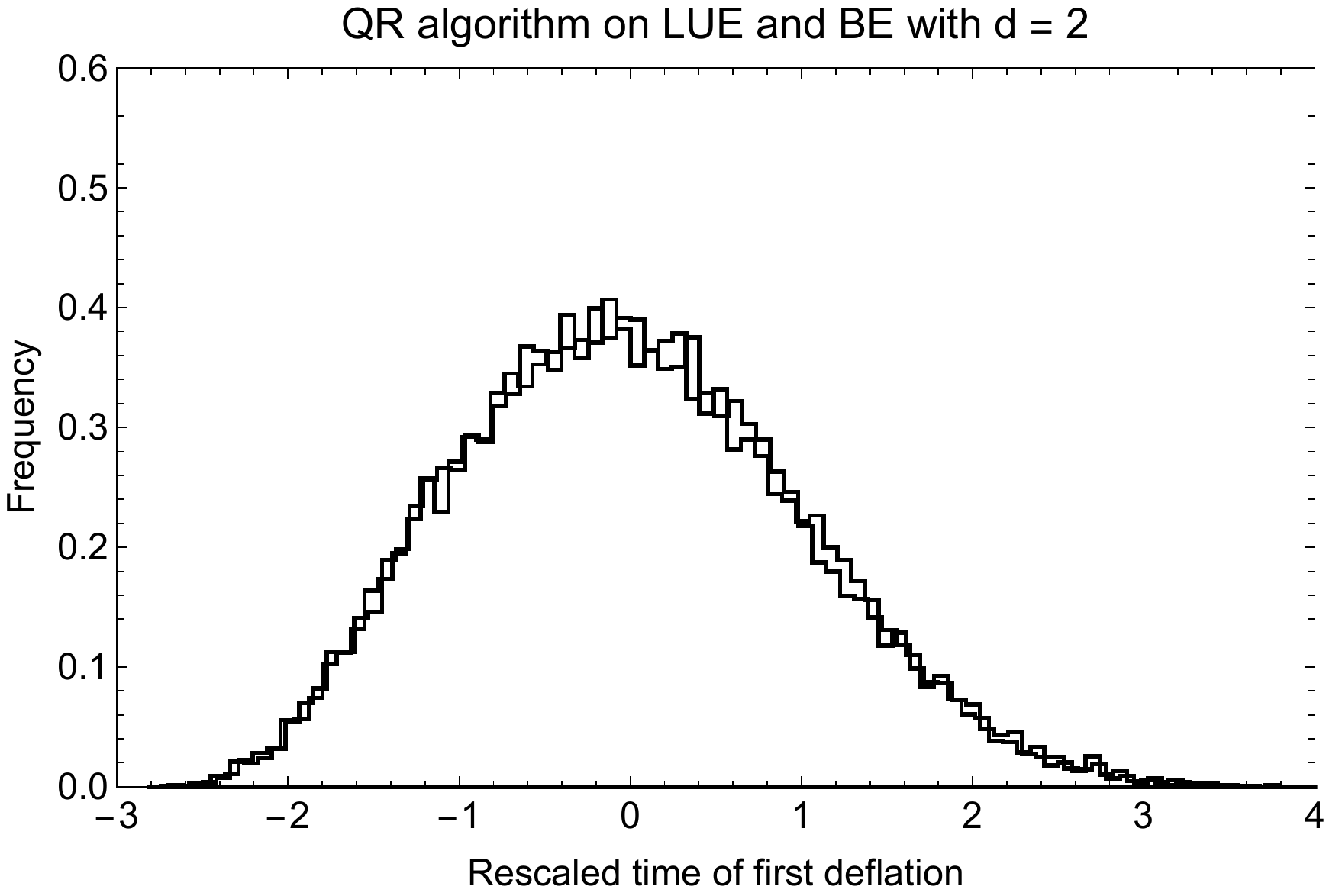}
\caption{\label{f:QRdef} Rescaled histograms (normalized to mean zero and variance one) for the time of first deflation in the QR algorithm applied to LOE and BE matrices.  We choose $d = 2$.  This distribution is much more symmetric than the distributions in Figure~\ref{f:QRdeflation} and Theorem~\ref{t:main} (displayed in Figure~\ref{f:QRdemon}).}
\end{figure}

\section{Fundamentals of the algorithms}\label{sec:alg}

Here we discuss the QR algorithm and power/inverse power methods.  We derive explicit formulae to analyze the halting times of the algorithms.

\subsection{The power and inverse power methods}\label{sec:Power}
Let $Y_1,Y_2,Y_3,\ldots,$
    be a sequence of independent, real, mean-zero, and variance-one random variables.  The power and inverse power methods with random starting are given in Algorithms~\ref{A:Power} and \ref{A:IP}.
\newcommand{\lold}{\lambda_{\mathrm{old}}}
\newcommand{\vold}{v_{\mathrm{old}}}

  \begin{algorithm}[tbp]
  Input: {$H$ and $\epsilon > 0$}\\
  Output: {$\lambda$: an approximation of the top eigenvalue of $H$ to the order of $\epsilon$}
  \begin{algorithmic}
  \STATE{set $\vold = (Y_1,Y_2,\ldots,Y_N)^T$;}
  \STATE{set $\vold = \vold/\|\vold\|_2$}
  \STATE{set $\lold = \infty$;}
  \STATE{set $v = H \vold$;}
  \STATE{set $\lambda = \langle v, \vold \rangle$;}
  \WHILE{$|\lambda-\lold| > \epsilon^2$}
  \STATE{set $v = v/\|v\|_2$;}
  \STATE{set $\vold = v$;}
  \STATE{set $\lold = \lambda$;}
  \STATE{set $v = Hv$;}
  \STATE{set $\lambda = \langle v, \vold \rangle$;}
  \ENDWHILE
  \RETURN $\lambda$
\end{algorithmic}
\caption{\label{A:Power} The power method.}
\end{algorithm}

  The power method (see Algorithm~\ref{A:Power} above) is halted when successive approximations have a difference that is less than $\epsilon^2$.  Our analysis reveals (see Proposition~\ref{p:main} and Remark~\ref{r:IP-true}) that typically $|\lambda - \lambda_{\mathrm{old}}|$ is less than the true error $|\lambda - \lambda_N|$ and so one has to run until the difference is  $\epsilon^2$.  Similarly, the inverse power method is given by Algorithm~\ref{A:IP} below where we use the convention $0^{-1} = \infty$.
  \begin{algorithm}[tbp]
  Input: {$H$ and $\epsilon > 0$}\\
  Output: {$\lambda$: an approximation of the smallest eigenvalue of $H$ to the order of $\epsilon$}
  \begin{algorithmic}
    \STATE{set $\vold = (Y_1,Y_2,\ldots,Y_N)^T$;}
    \STATE{set $\vold = \vold/\|\vold\|_2$}
    \STATE{set $\lold = 0$;}
  \STATE{set $v = H^{-1} \vold$; }
  \STATE{set $\lambda = \langle v, \vold \rangle^{-1}$}
  \WHILE{$|\lambda^{-1}-\lold^{-1}| > \epsilon^2$}
  \STATE{set $v = v/\|v\|_2$;}
  \STATE{set $\vold = v$;}
  \STATE{set $\lold = \lambda$;}
  \STATE{set $v = H^{-1}v$;}
  \STATE{set $\lambda = \langle v, \vold \rangle^{-1}$;}
\ENDWHILE
\RETURN $\lambda$
\end{algorithmic}
\caption{\label{A:IP} The inverse power method.}
\end{algorithm}

Let $H = U \Lambda U^*$, $U = (u_1, u_2, \ldots, u_N)$ be a spectral decomposition for the matrix $H$.  A random unit vector is given by
\begin{align*}
v = Y/\|Y\|_2, \quad Y = (Y_1,Y_2,\ldots,Y_N)^T,
\end{align*}
for the given random variables $Y_j$. With the inverse power method, at each iteration, $t = 1,2,3,\ldots$ we have
\begin{align*}
\lambda_{\mathrm{IP}}(t) = \frac{\langle H^{-t} v, H^{-t} v \rangle}{\langle H^{-t} v, H^{-t-1} v \rangle} = \frac{\ds \sum_{n=1}^N \lambda_n^{-2t} \beta_n^2 }{\ds \sum_{n=1}^N \lambda_n^{-2t-1} \beta_n^2 }, \quad \beta_n = |\langle v,u_n\rangle|, \quad \lambda_{\mathrm{IP}}(t) \to \lambda_1.
\end{align*}
For the power method we have
\begin{align*}
\lambda_{\mathrm{P}}(t) = \frac{\langle H^{t} v, H^{t+1} v \rangle}{\langle H^{t} v, H^{t} v \rangle} = \frac{\ds \sum_{n=1}^N \lambda_n^{2t+1} \beta_n^2 }{\ds \sum_{n=1}^N \lambda_n^{2t} \beta_n^2 }, \quad \beta_n = |\langle v,u_n\rangle|, \quad \lambda_{\mathrm{P}}(t) \to \lambda_N.
\end{align*}

\subsubsection{The halting time}

We define the halting time for the inverse power method as
\begin{align}\label{e:halt-invpow}
T_{\mathrm{IP},\epsilon}(H,v) = \inf\{t : |\lambda^{-1}_{\mathrm{IP}}(t) - \lambda^{-1}_{\mathrm{IP}}(t+1)| \leq \epsilon^2 \}.
\end{align}
Provided that the smallest eigenvalue of $H$ is order $1$, this halting condition will give the same order of approximation in $\epsilon$ as a possibly more natural condition $\inf\{t : |\lambda_{\mathrm{IP}}(t) - \lambda_{\mathrm{IP}}(t+1)| \leq \epsilon \}$.  We choose \eqref{e:halt-invpow} for convenience and show it is sufficient.  Similarly, the halting time for the power method is
\begin{align}\label{e:halt-pow}
T_{\mathrm{P},\epsilon}(H,v) = \inf\{t : |\lambda_{\mathrm{P}}(t) - \lambda_{\mathrm{P}}(t+1)| \leq \epsilon^2 \}.
\end{align}

Define the function
\begin{align*}
E_{\mathrm{IP}}(t) = \lambda^{-1}_{\mathrm{IP}}(t+1) - \lambda^{-1}_{\mathrm{IP}}(t) = \frac{\ds \sum_{n=1}^N \lambda_n^{-2t-3} \beta_n^2 }{\ds \sum_{n=1}^N \lambda_n^{-2t-2} \beta_n^2 } - \frac{\ds \sum_{n=1}^N \lambda_n^{-2t-1} \beta_n^2 }{\ds \sum_{n=1}^N \lambda_n^{-2t} \beta_n^2 }
\end{align*}
Using the notation $\delta_n = \lambda_1^2/\lambda^2_n \leq 1$, $\nu_n = \beta_n^2/\beta_1^2$, we have
\begin{align}\label{e:EIP}
\begin{split}
E_{\mathrm{IP}}(t)
& = \underbrace{\frac{\ds  \sum_{n=2}^N (1-\delta_n)(\lambda_1^{-1}- \lambda_n^{-1})\delta_n^t \nu_n }{\ds \left(\sum_{n=1}^N \delta_n^{t} \nu_n \right) \left( \sum_{n=1}^N \delta_n^{t+1} \nu_n \right) }}_{E_{\mathrm{IP},0}(t)}\\
& + \underbrace{\frac{\ds  \left(\sum_{n=2}^N \delta_n^{t+1} \lambda_n^{-1} \nu_n \right)\left(\sum_{n=2}^N \delta_n^{t} \nu_n \right)- \left(\sum_{n=2}^N \delta_n^{t+1} \nu_n \right)\left(\sum_{n=2}^N \lambda_n^{-1}\delta_n^{t} \nu_n \right) }{\ds \left(\sum_{n=1}^N \delta_n^{t} \nu_n \right) \left( \sum_{n=1}^N \delta_n^{t+1} \nu_n \right) }}_{E_{\mathrm{IP},1}(t)}.
\end{split}
\end{align}
Note that
\begin{align*}
E_{\mathrm{IP},0}(t) = \lambda_1^{-1} \frac{\ds  \sum_{n=2}^N (1-\delta^{1/2}_n)^2(\delta^{1/2}_n + 1)\delta_n^t \nu_n }{\ds \left(\sum_{n=1}^N \delta_n^{t} \nu_n \right) \left( \sum_{n=1}^N \delta_n^{t+1} \nu_n \right) }.
\end{align*}

\begin{remark}\label{r:power}
We focus on the inverse power method here.  There is an anlogous function $E_{\mathrm{P}}(t)$ for the power method which can be found through the mapping $\lambda_j \to \lambda_{j}^{-1}$.  And so, if we can estabilish properties of $E_{\mathrm{IP}}(t)$ under assumptions on $H$ that $H^{-1}$ also satifies, the properties extend to $E_{\mathrm{P}}(t)$.
\end{remark}

\subsection{The QR (eigenvalue) algorithm} \label{sec:QR}
 Unlike the power and inverse power methods, the convergence criterion for the QR algorithm (without shifts) is much more subtle even though convergence is guaranteed for the matrices we consider \cite{Francis1961}.  We consider a general error control function $f(H) \geq 0$, see below. The basis of the algorithm is the QR factorization of a non-singular matrix.  We use $(Q,R) = \mathrm{QR}(H)$ ($H = QR$) to denote this factorization where $Q$ is unitary and $R$ is upper-triangular with positive diagonal entries.  The QR factorization can be found via the modified Gram--Schmidt procedure or Householder reflections, for example.  It is unique when it exists.  The QR algorithm is given by the following steps:\\
\begin{algorithm}[H]
Input: {$H$ and $\epsilon > 0$}\\
Output: {An approximation of the spectrum of $H$}
\begin{algorithmic}
\STATE{set $X=H$;}
\WHILE{$f(X) > \epsilon$}
\STATE{set $(Q,R) = \mathrm{QR}(X)$;}
\STATE{set $X = RQ$}
\ENDWHILE
\RETURN $[X_{11},X_{22},\ldots,X_{NN}]^T$
\end{algorithmic}
\caption{\label{A:QR} The QR algorithm.}
\end{algorithm}
\vspace{.1in}

Provided that $f$ is suitably chosen, (a subset of) the diagonal entries of $X$ will be an approximation of eigenvalues of $H$.  We develop a more analytically tractable description of the QR algorithm.  For a positive-definite matrix $H$, let $H^t$ denote its $t$th power, $t \geq 0$. Define $Q(t)$, $R(t)$ and $X(t)$ via
\begin{align*}
(Q(t),R(t)) &= \mathrm{QR}(H^t),\\
H^t &= Q(t)R(t),\\
X(t) & = Q^*(t) H Q(t).
\end{align*}
For the QR algorithm we are interested in $t \in \mathbb N$ but for additional remarks we want to consider $t \geq 0$.  And so, it is important to note that $Q(t)$ and $R(t)$ are infinitely differentiable matrix-valued functions of $t$.

It is well known that $X(n)$, $n =0,1,2,\ldots$ gives the iterates $X_n$ of the QR algorithm, with, of course, $X(0) = X_0 = H$.  For the convenience of the reader, we provide the following standard proof.
\begin{lemma}
For all $n \in \mathbb N$, $X(n) = X_n$.
\end{lemma}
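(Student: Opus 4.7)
The plan is to proceed by induction on $n$, exploiting the uniqueness of the QR factorization (for positive-definite, and more generally nonsingular, matrices) with positive diagonal entries on $R$.

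For the base case $n=0$, I would observe that $H^0 = I$ admits the trivial factorization $I = I \cdot I$, so by uniqueness $Q(0) = R(0) = I$. Consequently $X(0) = Q^*(0) H Q(0) = H = X_0$, matching the initialization of Algorithm~\ref{A:QR}.

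For the inductive step, suppose $X(n) = X_n$. Let $(Q_n, R_n) = \mathrm{QR}(X_n)$ be the factorization used at step $n$ of the algorithm, so that $X_{n+1} = R_n Q_n$. The key algebraic identity to set up is
\begin{align*}
H^{n+1} = H \cdot H^n = H \, Q(n) R(n) = Q(n) \bigl[ Q^*(n) H Q(n) \bigr] R(n) = Q(n) X_n R(n) = Q(n) Q_n R_n R(n).
\end{align*}
Here I use the inductive hypothesis $X(n) = X_n = Q_n R_n$. Now the product $Q(n) Q_n$ is unitary (product of unitaries) and $R_n R(n)$ is upper triangular with strictly positive diagonal (product of two such matrices). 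By the uniqueness of the QR factorization of $H^{n+1}$ (valid because $H$ is positive definite, hence so is $H^{n+1}$, which is in particular nonsingular), we conclude
\begin{align*}
Q(n+1) = Q(n) Q_n, \qquad R(n+1) = R_n R(n).
\end{align*}
Substituting into the definition of $X(n+1)$ yields
\begin{align*}
X(n+1) = Q^*(n+1) H Q(n+1) = Q_n^* \bigl[ Q^*(n) H Q(n) \bigr] Q_n = Q_n^* X_n Q_n = Q_n^* (Q_n R_n) Q_n = R_n Q_n = X_{n+1},
\end{align*}
completing the induction.

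There is no substantive obstacle here; the only technical point worth emphasizing is the invocation of uniqueness of the QR factorization with positive-diagonal $R$, which requires nonsingularity of $H^{n+1}$. This is immediate in our positive-definite setting but must be acknowledged, since without the positive-diagonal normalization the factorization is only unique up to a diagonal unitary.
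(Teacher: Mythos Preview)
Your proof is correct and follows essentially the same approach as the paper: both arguments hinge on the uniqueness of the QR factorization with positive-diagonal $R$ to identify $Q(n)$ with the accumulated product of the step-by-step unitaries $Q_0 Q_1 \cdots Q_{n-1}$. The only organizational difference is that the paper first derives closed-form product formulas $X_n = Q_{n-1}^* \cdots Q_0^* H Q_0 \cdots Q_{n-1}$ and $H^n = Q_0 \cdots Q_{n-1} R_{n-1} \cdots R_0$ separately before matching them, whereas you fold this into a single direct induction; the content is the same.
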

\begin{proof}
Using induction, the QR algorithm is described as
\begin{align*}
X_0 &= H = Q_0R_0,\\
X_1 &= R_0Q_0 = Q_1 R_1 = Q_0^* H Q_0,\\
X_2 &= R_1Q_1 = Q_1^* X_1 Q_1 = Q_1^* Q_0^* H Q_0 Q_1,\\
& \vdots\\
X_n &= Q^*_{n-1} \cdots Q_{0}^* H Q_0 \cdots Q_{n-1}.
\end{align*}
Then we consider the QR factorization of $H^n$
\begin{align*}
H^n = Q(n)R(n) & = (Q_0R_0)^n,\\
& = Q_0 (R_0Q_0)^{n-1} R_0,\\
& = Q_0 Q_1 (R_1 Q_1)^{n-2} R_1 R_0,\\
& \vdots\\
& = Q_0 \cdots Q_{n-1} R_{n-1} \cdots R_0.
\end{align*}
It then follows that $Q(n) = Q_0 Q_1 \cdots Q_{n-1}$ by the uniqueness of the QR factorization.  Therefore $X(n) = X_n$.
\end{proof}

Let $H = V \Lambda V^*$, $\Lambda = \diag(\lambda_1,\ldots, \lambda_N)$ be a\footnote{Note that $V$ is not uniquely defined.  Furthermore, if the spectrum is not simple then $V$ is not even uniquely defined modulo phases. } spectral decomposition of $H$.  Then define $U(t) = Q^*(t) V$ so that $X(t) = U(t) \Lambda U^*(t)$.  We first compute $U_{Nn}(t)$, $n = 1,2,\ldots,N$ by considering ($e_j$ is the $j$th canonical basis column vector and $U(0) = V$)
\begin{align*}
U(t) &= Q^*(t) U(0),\\
e_N^TU(t) &= e_N^T R(t) H^{-t} U(0),\\
&= R_{NN} e_N^T U(0) \Lambda^{-t},\\
U_{Nn}(t) &= e_N^T U(t) e_n = R_{NN} e_N^T U(0) \Lambda^{-t} e_n,\\
& = R_{NN} U_{Nn}(0)\lambda_n^{-t}.
\end{align*}
And so, to determine $R_{NN} > 0$, we sum over $n$ and use the normalization of the rows of $U(t)$:
\begin{align*}
R_{NN} = \left( \sum_{n=1}^N \lambda_n^{-2t} |U_{Nn}(0)|^2 \right)^{-1/2}.
\end{align*}

When it comes to the choice of the function $f(X)$ in Algorithm~\ref{A:QR}, we first give two options that we do not analyze but are of great interest:
\begin{itemize}
  \item Compute the entire spectrum: $f(X) = \|X - \diag(X)\|_{\mathrm{F}}$.  Here $\diag(X)$ is a diagonal matrix containing just the diagonal of $X$ and $\|\cdot\|_{\mathrm{F}}$ is the Frobenius norm.
  \item Deflation\footnote{Here $X(i:j,l:k)$ refers to the submatrix containing entries in rows $i$ through $j$ and columns $l$ through $k$.}:

  \begin{align*}
  f(X) = \min_{ 1\leq k \leq N-1} \|X(k+1:N,1:k)\|_2.
  \end{align*}
\end{itemize}

For our purposes here we choose $f(X)$ as
\begin{align*}
f(X) = \sqrt{\sum_{n=1}^{N-1} |X_{Nn}|^2}.
\end{align*}
This is the sum of the off-diagonal entries in the last row of $X$.  And so, if $f(X)$ is small then $X_{NN}$ is close to an eigenvalue of $X$.  Continuing,
\begin{align*}
 E_{\mathrm{QR}}(t) : = f(X(t)) & = \sum_{n=1}^{N-1} |X_{Nn}(t)|^2 = \sum_{n=1}^{N-1} X_{Nn}(t)X_{nN}(t) = [X^2(t)]_{NN} - X_{NN}^2(t)\\
& = \sum_{n=1}^N \lambda_n^2 |U_{Nn}(t)|^2 - \left(\sum_{n=1}^N \lambda_n |U_{Nn}(t)|^2 \right)^2\\
& = \frac{\ds \sum_{n=1}^N \lambda_n^{-2t+2} |U_{Nn}(0)|^2}{\ds \left( \sum_{n=1}^N \lambda_n^{-2t} |U_{Nn}(0)|^2 \right)} - \frac{\ds \left(\sum_{n=1}^N \lambda_n^{-2t+1} |U_{Nn}(0)|^2 \right)^2}{\ds \left( \sum_{n=1}^N \lambda_n^{-2t} |U_{Nn}(0)|^2 \right)^2}.
\end{align*}

\begin{remark}
It is worth emphasizing that $X(t)$, the interpolation of the QR iterates $\{X_n\}$, is the solution of a nonlinear differential equation \cite{DeiftEigenvalue}.  Furthermore, in the real symmetric case, this is generically a system in $2[N^2/4]$ variables that is Hamiltonian and completely integrable.  The eigenvalues of $X(0) = H$, constitute $N$ of the $[N^2/4]$ integrals of the motion, \emph{i.e.} the flow is, in particular, isopectral.  The equations of motion are given by
\begin{align*}
\frac{dX}{dt} = [P(\log X),X], \quad P(Y) = Y_-^T - Y_-,
\end{align*}
where $Y_-$ is the (strictly) lower-triangular part of $Y$.  The Hamiltonian is given by $H(X) = \tr (X (\log X -1))$.  See \cite{DeiftEigenvalue} and \cite{WatkinsIsospectral} for more details.
\end{remark}

\subsubsection{The halting time}  We define the halting time for the QR algorithm as
\begin{align*}
  T_{\mathrm{QR},\epsilon}(H) = \inf \{ t: E_{\mathrm{QR}}(t) \leq \epsilon^2 \}.
\end{align*}
Note that we do not assume here that $t$ is an integer.  The ``true'' halting time for the QR algorithm is $\lceil T_{\mathrm{QR},\epsilon}(H) \rceil$ but it will turn out that this has the same limiting distribution as $ T_{\mathrm{QR},\epsilon}(H)$.

The first step in the analysis of the QR algorithm is to write $E_{\mathrm{QR}}(t)$ as a sum of two positive parts, as follows.  Define for $n \geq 1$
\begin{align*}
\delta_n = \lambda^2_1/\lambda^2_n, \quad \Delta_n = \lambda_n - \lambda_1, \quad \beta_n = |U_{Nn}(0)|, \quad \nu_n = \beta_n^2/\beta^2_1.
\end{align*}
Then
\begin{align*}
E_{\mathrm{QR}}(t) =  \frac{\ds \sum_{n=1}^N \lambda_n^{2} \delta_n^{t} \beta_n^2}{\ds \left( \sum_{n=1}^N \delta^{t} \beta_n^2 \right)} - \frac{\ds \left(\sum_{n=1}^N \lambda_n \delta_n^{t} \beta_n^2 \right)^2}{\ds \left( \sum_{n=1}^N \delta_n^{t} \beta_n^2 \right)^2}.
\end{align*}
It is clear that $\delta_1 = 1 \geq \delta_n$ for all $n$ and we isolate this term:
\begin{align*}
E_{\mathrm{QR}}(t) &=  \frac{\left(\ds \sum_{n=1}^N \lambda_n^{2} \delta_n^{t} \beta_n^2 \right) \left(\ds  \sum_{n=1}^N \delta_n^{t} \beta_n^2 \right) - \ds \left(\sum_{n=1}^N \lambda_n \delta_n^{t} \beta_n^2 \right)^2}{\ds \left( \sum_{n=1}^N \delta_n^{t} \beta_n^2 \right)^2}\\
&= \frac{\left(\ds \lambda_1^2 + \sum_{n=2}^N \lambda_n^{2} \delta_n^{t} \nu_n \right) \left(1 + \ds  \sum_{n=2}^N \delta_n^{t} \nu_n \right)-\ds \left(\lambda_1 + \sum_{n=2}^N \lambda_n \delta_n^{t} \nu_n \right)^2}{\ds \left( 1 + \sum_{n=2}^N \delta_n^{2t} \nu_n \right)^2}\\
& = \underbrace{\frac{\ds \sum_{n=2}^N \Delta_n^2\delta_n^{t} \nu_n}{\ds \left( 1 + \sum_{n=2}^N \delta_n^{t} \nu_n \right)^2}}_{:= E_{\mathrm{QR},0}(t)}+\underbrace{\frac{\left(\ds \sum_{n=2}^N \lambda_n^{2} \delta_n^{t} \nu_n \right) \left(\ds  \sum_{n=2}^N \delta_n^{t} \nu_n \right) - \ds \left(\sum_{n=2}^N \lambda_n \delta_n^{t} \nu_n \right)^2}{\ds \left(1+ \sum_{n=2}^N \delta_n^{t} \nu_n \right)^2}}_{:=E_{\mathrm{QR},1}(t)}.
\end{align*}
Heuristically, $E_{\mathrm{QR},1}(t)$ is quadratic in $\delta_2^t$ and $E_{\mathrm{QR},0}(t)$ is not.  Therefore $E_{\mathrm{QR},0}(t)$ should provide the leading order behavior of $E_{\mathrm{QR}}(t)$ as $t \to \infty$ provided that the $\nu_n$'s are not too large.  Note that by the Cauchy--Schwartz inequality, $E_{\mathrm{QR},1}(t) \geq 0$.

\section{Proofs of the main theorems} \label{sec:proofs}

In order to prove our main theorems we take the following approach.  The dynamics of the QR algorithm closely mirrors that of the so-called Toda algorithm and therefore many of the results of \cite{Deift2016} apply directly.  And to prove Theorem~\ref{t:main} for the QR algorithm we almost exclusively simply quote results from \cite{Deift2016}.  To prove Theorem~\ref{t:main} for the power and inverse power methods, we discuss the calculations in more detail.

For convenience let $\epsilon = N^{-\alpha/2}$. Then Condition~\ref{cond:scaling} takes the form
\begin{align*}
 \alpha \geq 10/3 + \sigma,
\end{align*}
with $\sigma > 0$ and fixed.

\subsection{Technical lemmas}

We begin by modifying the technical lemmas from \cite{Deift2016} as our formulae now depend on the ratio of eigenvalues as opposed to their differences in \cite{Deift2016}. The main fact is that if a matrix $H$ satisfies Condition~\ref{cond:rigidity} with $0 < s < 1/5$ then so does $\log H$ with quantiles $\hat \gamma_n = \log \gamma_n$ provided $N$ is sufficiently large.  Indeed for $0 < s < \sigma/40$
\begin{align*}
 |\log \lambda_n - \log \gamma_n| &\leq \frac{1}{\lambda_1} |\lambda_n - \gamma_n| \leq \frac{1}{\gamma_1 - N^{-2/3+s/2}} |\lambda_n - \gamma_n| \\
 & \leq \frac{2}{\lambda_-}|\lambda_n - \gamma_n| \leq N^{s/4}|\lambda_n - \gamma_n|,
\end{align*}
for $N$ sufficiently large.  Applying Condition~\ref{cond:rigidity}(5) with $s$ replaced by $s/2$, we conclude that for $N$ sufficiently large $|\log \lambda_n - \hat \gamma_n| \leq N^{-2/3+s/2}(\min\{n,N-n+1\})^{-1/3}$ for all $n$. Concerning Condition~\ref{cond:rigidity}(3), note that for $N$ sufficiently large
\begin{align*}
\frac{\lambda_N - \lambda_n}{2 \lambda_+} \leq \frac{\lambda_N-\lambda_n}{\lambda_N } \leq \log \lambda_N - \log \lambda_n \leq \frac{\lambda_N - \lambda_n}{\lambda_1} \leq \frac{2}{\lambda_-} (\lambda_N - \lambda_n)
\end{align*}
and one then proceeds as before.  The proof of Condition~\ref{cond:rigidity}(4) is similar.

Recall the notation $\delta_n = \lambda_1^2/\lambda_n^2$ and define $I_c = \{2 \leq n \leq N: \delta_n \leq \delta_2^{1+c}\}$ for $c > 0$.
\begin{lemma}[\cite{Deift2016}]\label{l:Ic}
  Let $0 < c < 10/\sigma$.  Given Condition~\ref{cond:rigidity},  then the cardinality of $I_c^c$ is given by
  \begin{align*}
    |I_c^c| \leq N^{2s}
  \end{align*}
  for $N$ sufficiently large, where $^c$ denotes the compliment relative to $\{1,\ldots,N-1\}$.
\end{lemma}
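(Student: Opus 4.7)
The plan is to translate the multiplicative condition $\delta_n \leq \delta_2^{1+c}$ into an additive condition on eigenvalue gaps and then count, using rigidity, how many $n$ can be anomalously close to $\lambda_1$.

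First, I would pass to logarithms. The condition $n \in I_c^c$ reads $\delta_n > \delta_2^{1+c}$, i.e.
\begin{align*}
\log\lambda_n - \log\lambda_1 < (1+c)(\log\lambda_2 - \log\lambda_1).
\end{align*}
By Remark~\ref{r:quantiles} and Condition~\ref{cond:rigidity}, for $N$ large all three of $\lambda_1,\lambda_2,\lambda_n$ lie in a fixed compact subinterval of $(0,\infty)$ bounded away from $0$ by, say, $\lambda_-/2$. Hence $\log$-differences and linear differences are comparable: there exist constants $0 < c_- < c_+$ (depending only on $\lambda_\pm$) so that $c_-|a-b| \leq |\log a - \log b| \leq c_+|a-b|$ on that interval. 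Applying the upper comparison to the right-hand side and the lower comparison to the left, the inequality above implies
\begin{align*}
\lambda_n - \lambda_1 \leq K\,(\lambda_2 - \lambda_1), \qquad K := (1+c)\,c_+/c_-,
\end{align*}
with $K$ independent of $n$ and $N$.

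Next, I would use rigidity to convert this into an inequality on the quantiles. Condition~\ref{cond:rigidity}(4) supplies $\lambda_2 - \lambda_1 \leq N^{-2/3+s/2}$, so $\lambda_n - \lambda_1 \leq K N^{-2/3+s/2}$ for every $n \in I_c^c$. Condition~\ref{cond:rigidity}(5) then yields
\begin{align*}
\gamma_n - \gamma_1 \;\leq\; (\lambda_n - \lambda_1) + |\lambda_n - \gamma_n| + |\lambda_1 - \gamma_1| \;\leq\; (K + 2)\,N^{-2/3+s/2}.
\end{align*}

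Finally, I would use the square-root vanishing of the Marchenko--Pastur density at the left edge. Since $\rho_d(x) \sim C_d\sqrt{x-\lambda_-}$ as $x \downarrow \lambda_-$, inverting $n/N = \int_{\lambda_-}^{\gamma_n}\rho_d$ gives $\gamma_n - \lambda_- = \tilde c_d (n/N)^{2/3}(1+o(1))$, and in particular
\begin{align*}
\gamma_n - \gamma_1 \;\geq\; c'\,N^{-2/3}\bigl(n^{2/3} - 1\bigr) \;\geq\; \tfrac{c'}{2}\, N^{-2/3}\, n^{2/3}
\end{align*}
for $n \geq 3$ and $N$ large. Combining with the previous display forces $n^{2/3} \leq C\,N^{s/2}$, i.e. $n \leq C^{3/2} N^{3s/4}$. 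Thus $|I_c^c| \leq C^{3/2} N^{3s/4} + 2$, which is bounded by $N^{2s}$ for $N$ sufficiently large, since $3s/4 < 2s$.

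The only point requiring care is the translation step from $\log$-differences to linear differences: the constant $K$ depends on $c$, and one must check that the hypothesis $c < 10/\sigma$ is compatible with $s < \sigma/40$. This is automatic here because $K$ is a $c$-dependent constant multiplying $N^{-2/3+s/2}$, and the counting bound $n \leq C' N^{3s/4}$ absorbs any such constant. The restriction $c < 10/\sigma$ is presumably needed elsewhere in the paper (when controlling $E_{\mathrm{QR},1}$ or the analogous $E_{\mathrm{IP},1}$), not for this counting lemma.
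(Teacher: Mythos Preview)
Your proof is correct and follows essentially the same route as the paper: the paper does not give its own argument but cites \cite{Deift2016} after first observing (just above the lemma) that if $H$ satisfies Condition~\ref{cond:rigidity} then so does $\log H$ with quantiles $\hat\gamma_n=\log\gamma_n$, which is precisely your translation from the multiplicative condition $\delta_n>\delta_2^{1+c}$ to an additive one on eigenvalue differences; the remaining counting via rigidity and the square-root edge behavior of the equilibrium measure is the content of the cited lemma. Your remark that the restriction $c<10/\sigma$ plays no role in the counting itself is also accurate.
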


    Recalling the notation $\nu_n = \beta_n^2/\beta_1^2$, for matrices in $\mathcal R_{N,s}$ we have $\nu_n \leq N^{2s}$ and $\sum_n \nu_n = \beta_1^{-2} \leq N^{1+s}$ because $\sum_{n=1}^N \beta_n^2 = \sum_{n=1}^N |\langle v,u_j\rangle|^2 = \|U v\|_2 = \|v\|_2$ for the unitary matrix $U$ of eigenvectors. We also have the following result.

  \begin{lemma}[\cite{Deift2016}]\label{l:estimate}
	Given Condition~\ref{cond:rigidity}, $0 < c < 10/\sigma$ and $j \leq 3$ fixed there exists an absolute constant $C$ such that
    \begin{align*}
    	 N^{-2s} \Delta_{2}^j \delta_2^t \leq \sum_{n=2}^{N} \nu_n \Delta^j_n \delta_n^t \leq C \delta_2^t \left(N^{4s} \Delta_{2}^j + N^{1+s} \delta_2^{ct} \right),
	\end{align*}
    for $N$ sufficiently large.
\end{lemma}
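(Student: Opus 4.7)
The plan is to prove the two bounds separately: the lower bound is immediate from a single summand, while the upper bound comes from splitting the sum along the partition $\{2,\dots,N\} = I_c \sqcup I_c^c$ furnished by Lemma~\ref{l:Ic}.

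For the lower bound I will retain only the $n=2$ term in $\sum_{n=2}^N \nu_n \Delta_n^j \delta_n^t$, which is legitimate because every summand is nonnegative. Condition~\ref{cond:rigidity}(2) gives $\beta_1 \leq N^{-1/2+s/2}$ and $\beta_2 \geq N^{-1/2-s/2}$, so $\nu_2 = \beta_2^2/\beta_1^2 \geq N^{-2s}$, which yields the claimed bound $N^{-2s}\Delta_2^j\delta_2^t$.

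For the upper bound on $I_c^c$, the first step is to show that $\Delta_n$ is comparable to $\Delta_2$ there. The defining inequality $\delta_n > \delta_2^{1+c}$ rearranges, via $\delta_n = (\lambda_1/\lambda_n)^2$, to
\[
  1 + \Delta_n/\lambda_1 \,<\, (1 + \Delta_2/\lambda_1)^{1+c}.
\]
Since $\Delta_2 \leq N^{-2/3+s/2}$ by Condition~\ref{cond:rigidity}(4) and $\lambda_1$ is bounded below away from zero (Remark~\ref{r:quantiles}), the quantity $\Delta_2/\lambda_1$ is $o(1)$, and Taylor expansion yields $\Delta_n \leq (2+c)\Delta_2$ on $I_c^c$ for $N$ sufficiently large. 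Combining this with the monotonicity $\delta_n \leq \delta_2$, the uniform delocalization bound $\nu_n \leq N^{2s}$, and the cardinality estimate $|I_c^c| \leq N^{2s}$ from Lemma~\ref{l:Ic}, the contribution from $I_c^c$ is at most $C N^{4s}\Delta_2^j\delta_2^t$.

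For the complementary sum over $I_c$, I will use the defining inequality $\delta_n \leq \delta_2^{1+c}$ to extract an extra factor $\delta_2^{ct}$, the crude bound $\Delta_n \leq 2\lambda_+ + 2\eta = \mathcal O(1)$ from Remark~\ref{r:quantiles}, and the global estimate $\sum_n \nu_n = \beta_1^{-2} \leq N^{1+s}$, giving a contribution of at most $C N^{1+s}\delta_2^t\delta_2^{ct}$. Adding the two pieces produces the stated upper bound. I do not anticipate a substantive obstacle; the only non-routine step is converting the multiplicative comparison $\delta_n > \delta_2^{1+c}$ into the additive comparison $\Delta_n \leq (2+c)\Delta_2$ on $I_c^c$, and this is handled transparently by the scale information in Condition~\ref{cond:rigidity} together with the fact that $\lambda_1$ stays bounded away from zero.
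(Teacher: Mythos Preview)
The paper does not supply its own proof of this lemma; it is quoted from \cite{Deift2016}. Your argument is correct and follows exactly the route dictated by the available tools---the lower bound from the single $n=2$ term, and the upper bound by splitting the sum along $I_c \sqcup I_c^c$ using Lemma~\ref{l:Ic}---which is the canonical proof and almost certainly the one in the cited reference. (One trivial remark: the upper bound $\beta_1 \leq N^{-1/2+s/2}$ that you invoke comes from Condition~\ref{cond:rigidity}(1), not (2).)
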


\subsection{Main estimates for the QR algorithm}

The steps of the proof are the following:
\begin{enumerate}[(1)]
  \item \emph{a priori} estimates on $T_{\mathrm{QR},\epsilon}$ that will hold with high probability,
  \item a lower bound on $-E_{\mathrm{QR},0}'(t)$ over a region determined in (1),
  \item finding and estimating an approximation $T^*_{\mathrm{QR},\epsilon}$ of $T_{\mathrm{QR},\epsilon}$, and
  \item establishing that $T^*_{\mathrm{QR},\epsilon}$ converges in distribution and then using (1)-(3) to show that indeed $T^*_{\mathrm{QR},\epsilon}$ is close to $T_{\mathrm{QR},\epsilon}$.
\end{enumerate}

If $\epsilon$ is sufficiently small we expect $E_{\mathrm{QR},0}(t)$ to control the convergence of the algorithm.  Consider
\begin{align*}
E_{\mathrm{QR},0}(t) = \Delta_2^2\delta_2^t\nu_2 \frac{\ds  1 + \sum_{n=3}^N \frac{\Delta_n^2}{\Delta_2^2}\left(\frac{\lambda_2}{\lambda_n}\right) \frac{\nu_n}{\nu_2}}{\ds \left( 1 + \sum_{n=2}^N \delta_n^{t} \nu_n \right)^2},
\end{align*}
and the approximation $T^*_{\mathrm{QR},\epsilon}$ of $T_{\mathrm{QR},\epsilon}$ is given by
\begin{align*}
\Delta_2^2\delta_2^{T^*_{\mathrm{QR},\epsilon}}\nu_2 &= \epsilon^2.
\end{align*}
Thus
\begin{align}
T^*_{\mathrm{QR},\epsilon} &= (\alpha \log N + 2 \log \Delta_2 + \log \nu_2)/\log \delta_2^{-1}.\label{e:t-starQR}
\end{align}
To determine how close $T^*_{\mathrm{QR},\epsilon}$ is to $T_{\mathrm{QR},\epsilon}$ we use the following relation
\begin{align}\label{e:meanval}
E_{\mathrm{QR},0}(T^*_{\mathrm{QR},\epsilon}) -E_{\mathrm{QR},0}(T_{\mathrm{QR},\epsilon}) = E_{\mathrm{QR},0}'(\eta)(T^*_{\mathrm{QR},\epsilon}-T_{\mathrm{QR},\epsilon}),
\end{align}
for some $\eta$ between $T^*_{\mathrm{QR},\epsilon}$ and $T_{\mathrm{QR},\epsilon}$.  So, we need to show that the left-hand side of \eqref{e:meanval} is small and $E_{\mathrm{QR},0}'(\eta)$ is not too small. This is accomplished by following Lemmas~\ref{l:QR-T}-\ref{l:H1} and Proposition~\ref{p:conv}.  The the proofs of these results make heavy use of Lemmas~\ref{l:Ic} and \ref{l:estimate}.

\begin{lemma}[\cite{Deift2016}, Lemma 2.1]\label{l:QR-T}
Given Condition~\ref{cond:rigidity}, the halting time $T_{\mathrm{QR},\epsilon}$ for the QR algorithm satisfies
\begin{align*}
(\alpha - 4/3 - 5s) \log N/\log \delta^{-1}_{2} \leq T_{\mathrm{QR},\epsilon} \leq (\alpha - 4/3 + 7s) \log N /\log \delta^{-1}_{2},
\end{align*}
for sufficiently large $N$.
\end{lemma}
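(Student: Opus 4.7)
The plan is to transfer the result from its counterpart for the Toda algorithm, Lemma~2.1 of \cite{Deift2016}. The Toda error formulas involve exponentials in differences $\lambda_n - \lambda_m$, whereas the QR formulas involve ratios $\delta_n = \lambda_1^2/\lambda_n^2$; the change of variables $\lambda_n \mapsto 2 \log \lambda_n$ converts one into the other. The QR result therefore follows once one verifies that the log-eigenvalues inherit the rigidity structure of Condition~\ref{cond:rigidity}, which is exactly what the authors establish in the opening paragraphs of Section~\ref{sec:proofs}: for $H \in \mathcal R_{N,s}$ and $N$ sufficiently large, $\log H$ satisfies Condition~\ref{cond:rigidity} with quantiles $\hat\gamma_n = \log \gamma_n$.

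Given this reduction, the proof I would write decomposes the halting-time characterization $E_{\mathrm{QR}}(T_{\mathrm{QR},\epsilon}) = \epsilon^2 = N^{-\alpha}$ into two one-sided statements. Using $E_{\mathrm{QR}} = E_{\mathrm{QR},0} + E_{\mathrm{QR},1}$ with $E_{\mathrm{QR},1} \geq 0$ (Cauchy--Schwartz), the lower bound $T_{\mathrm{QR},\epsilon} \geq t^-$ reduces to $E_{\mathrm{QR},0}(t^-) > N^{-\alpha}$ at $t^- = (\alpha - 4/3 - 5s)\log N / \log \delta_2^{-1}$, while the upper bound $T_{\mathrm{QR},\epsilon} \leq t^+$ reduces to $E_{\mathrm{QR}}(t^+) \leq N^{-\alpha}$ at $t^+ = (\alpha - 4/3 + 7s)\log N / \log \delta_2^{-1}$.

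For the lower bound, I isolate the $n = 2$ term to obtain $E_{\mathrm{QR},0}(t) \geq \Delta_2^2 \delta_2^t \nu_2 / (1 + \sum_{n \geq 2} \delta_n^t \nu_n)^2$. The rigidity estimates $\Delta_2^2 \geq N^{-4/3 - s}$ (from Condition~\ref{cond:rigidity}(4)) and $\nu_2 \geq N^{-2s}$ (from Condition~\ref{cond:rigidity}(1)-(2) applied to $\beta_1$ and $\beta_2$) make the numerator at $t^-$ at least $N^{-\alpha + 2s}$. A crude bound on the denominator using Lemma~\ref{l:estimate} and Condition~\ref{cond:scaling} keeps it of order one, and the lower bound follows. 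For the upper bound, I apply Lemma~\ref{l:estimate} with $j = 2$ to estimate $E_{\mathrm{QR},0}(t^+)$, and use the pointwise inequality $E_{\mathrm{QR},1}(t) \leq (\sum_{n \geq 2} \Delta_n^2 \delta_n^t \nu_n)(\sum_{n \geq 2} \delta_n^t \nu_n)$ together with Lemma~\ref{l:estimate} at both $j = 2$ and $j = 0$ to estimate $E_{\mathrm{QR},1}(t^+)$. Both contributions then take the form $C \delta_2^{t^+} (N^{4s} \Delta_2^2 + N^{1 + s} \delta_2^{c t^+})$.

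The main delicate point is absorbing the tail contribution $N^{1 + s} \delta_2^{c t^+}$ into the principal term $N^{4s} \Delta_2^2$. I must choose $c$ inside the admissible range $(0, 10/\sigma)$ of Lemma~\ref{l:Ic}, yet large enough that $c(\alpha - 4/3 + 7s)$ exceeds roughly $7/3$. Condition~\ref{cond:scaling}, which forces $\alpha - 4/3 + 7s > 2$, leaves precisely enough slack for such a choice, and this is the only place the full strength of the scaling hypothesis is used. Everything else is routine substitution of the rigidity bounds into the explicit formulas.
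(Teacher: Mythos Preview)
Your primary approach --- transfer from \cite[Lemma~2.1]{Deift2016} via the observation that $\log H$ inherits Condition~\ref{cond:rigidity} --- is exactly what the paper does: the lemma is stated as a citation, and the only work the paper performs is the rigidity verification for $\log\lambda_n$ in the opening paragraph of Section~\ref{sec:proofs}. On that score your proposal is correct and matches the paper.

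The direct proof sketch you add has one genuine gap. The claim that ``the lower bound $T_{\mathrm{QR},\epsilon} \geq t^-$ reduces to $E_{\mathrm{QR},0}(t^-) > N^{-\alpha}$'' is not justified: the halting time is an infimum, and $E_{\mathrm{QR}}$ is not known to be monotone, so one must verify $E_{\mathrm{QR},0}(t) > N^{-\alpha}$ on the \emph{entire} interval $[0,t^-)$. Your assertion that ``a crude bound on the denominator \ldots\ keeps it of order one'' fails near $t=0$, where $\sum_{n\ge 2}\delta_n^t\nu_n$ can be as large as $N^{1+s}$. The paper's own proof of the parallel statement for the inverse power method (Lemma~\ref{l:Tip}) handles this by splitting: first it shows $E_{\mathrm{IP},0}(t)>\epsilon^2$ for $t=a\log N/\log\delta_2^{-1}$ with $a\le\sigma/2$ (small-$t$ regime, where the large denominator is absorbed crudely), and only then invokes the order-one denominator bound \eqref{e:order1} for $a\ge\sigma/2$. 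Your numerator and denominator estimates are correct once $t$ is bounded below in this way, so the repair is routine, but the two-regime split is not optional.
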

Define the interval
\begin{align*}
L_\alpha = [(\alpha - 4/3 - 5s) \log N/\log \delta^{-1}_{2} , (\alpha - 4/3 + 9s) \log N /\log \delta^{-1}_{2}].
\end{align*}
\begin{lemma}[\cite{Deift2016}, Lemma 2.2]\label{l:DH0}
Given Condition~\ref{cond:rigidity} and $t \in L_\alpha$
\begin{align*}
-E_{\mathrm{QR},0}'(t) \geq C N^{-12s-\alpha-2/3},
\end{align*}
for sufficiently large $N$.
\end{lemma}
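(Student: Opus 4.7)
}
Write $A(t)=\sum_{n=2}^{N}\Delta_n^2\delta_n^t\nu_n$ and $B(t)=1+\sum_{n=2}^{N}\delta_n^t\nu_n$, so $E_{\mathrm{QR},0}(t)=A(t)/B(t)^2$. Differentiating,
\begin{align*}
-E_{\mathrm{QR},0}'(t) \;=\; \underbrace{\frac{-A'(t)}{B(t)^2}}_{\text{main}} \;-\; \underbrace{\frac{2A(t)\,|B'(t)|}{B(t)^3}}_{\text{correction}},
\end{align*}
where $-A'(t)=\sum_{n\ge 2}\Delta_n^2\log(\delta_n^{-1})\delta_n^t\nu_n\ge 0$ and $|B'(t)|=\sum_{n\ge 2}\log(\delta_n^{-1})\delta_n^t\nu_n\ge 0$. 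The plan is to show that the main term dominates by a factor that is at least a positive power of $N$.

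\paragraph*{Lower bound on the main term.}
First, bound $B(t)$. By Lemma~\ref{l:estimate} with $j=0$ and any fixed $0<c<10/\sigma$, $\sum_{n\ge 2}\delta_n^t\nu_n\le C\delta_2^t(N^{4s}+N^{1+s}\delta_2^{ct})$; since $t\in L_\alpha$ gives $\delta_2^t\le N^{-(\alpha-4/3-5s)}$ and, for $c$ chosen suitably, $N^{1+s}\delta_2^{(1+c)t}$ is also $o(1)$ because $\alpha\ge 10/3+\sigma$, one obtains $1\le B(t)\le 2$ for $N$ large. Next, keep only the $n=2$ term in $-A'(t)$:
\begin{align*}
-A'(t)\;\ge\; \Delta_2^2\,\log(\delta_2^{-1})\,\delta_2^t\,\nu_2.
\end{align*}
Condition~\ref{cond:rigidity}(4) gives $\Delta_2\ge N^{-2/3-s/2}$; parts (1)--(2) yield $\nu_2=\beta_2^2/\beta_1^2\ge N^{-2s}$; a Taylor expansion $\log(1+x)\ge x/2$ for small $x$ together with $\Delta_2/\lambda_1\to 0$ gives $\log\delta_2^{-1}=2\log(1+\Delta_2/\lambda_1)\ge \Delta_2/\lambda_+\ge c N^{-2/3-s/2}$; and $t\in L_\alpha$ gives $\delta_2^t\ge N^{-(\alpha-4/3+9s)}$. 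Multiplying,
\begin{align*}
\frac{-A'(t)}{B(t)^2}\;\ge\; c\, N^{-\alpha-2/3-O(s)}.
\end{align*}

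\paragraph*{Upper bound on the correction, and comparison.}
Using Lemma~\ref{l:estimate} with $j=2$, $A(t)\le C\delta_2^t(N^{4s}\Delta_2^2+N^{1+s}\delta_2^{ct})\le C\delta_2^t N^{-4/3+O(s)}$ for $c$ chosen so that the second summand is absorbed by the first (possible because $\alpha\ge 10/3+\sigma$). Since $\lambda_1$ is bounded away from $0$ and $\lambda_N$ above (Remark~\ref{r:quantiles}), $\log\delta_n^{-1}\le C$ uniformly in $n$, so $|B'(t)|\le C\sum_{n\ge 2}\delta_n^t\nu_n\le C N^{O(s)}\delta_2^t$ by Lemma~\ref{l:estimate} again. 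Multiplying and using $\delta_2^{2t}\le N^{-2\alpha+8/3+O(s)}$ over $L_\alpha$,
\begin{align*}
\frac{2A(t)\,|B'(t)|}{B(t)^3}\;\le\; C\,N^{4/3-2\alpha+O(s)}.
\end{align*}
The ratio of correction to main is therefore $O(N^{2-\alpha+O(s)})$, which vanishes as $N\to\infty$ since $\alpha\ge 10/3+\sigma$ and $s<\sigma/40$. Hence $-E_{\mathrm{QR},0}'(t)$ inherits the lower bound of the main term, giving the claimed inequality with the absolute exponent $12s$ after absorbing the various $O(s)$ contributions.

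\paragraph*{Main obstacle.}
The principal difficulty is bookkeeping of the $N^s$ factors: one must simultaneously extract a lower bound of order $N^{-\alpha-2/3-O(s)}$ from the $n=2$ term \emph{and} ensure the mixed-sign correction $2A(t)B'(t)/B(t)^3$ is genuinely subleading. The condition $\alpha\ge 10/3+\sigma$ with $s<\sigma/40$ in Condition~\ref{cond:scaling} is precisely what buys the gap $N^{2-\alpha+O(s)}\to 0$ between the two, and the Taylor-type lower bound $\log\delta_2^{-1}\gtrsim\Delta_2$ is what converts the eigenvalue gap on the $N^{-2/3}$ scale into the correct exponent $-2/3$ in the final estimate.
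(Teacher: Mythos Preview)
Your approach is correct and mirrors exactly what the paper does in the analogous inverse-power lemma (Lemma~\ref{l:E0prime}): split $-E_{\mathrm{QR},0}'(t)$ into a positive ``main'' piece coming from $-A'(t)/B(t)^2$ and a signed ``correction'' $2A(t)|B'(t)|/B(t)^3$, lower-bound the main piece by retaining only the $n=2$ summand, upper-bound the correction via Lemma~\ref{l:estimate}, and observe that the ratio is $O(N^{2-\alpha+O(s)})\to 0$ since $\alpha\ge 10/3+\sigma$. The paper itself does not reprove the QR version (it simply cites \cite{Deift2016}), but your argument is the natural transcription of the IP proof to the QR setting.

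One small bookkeeping point: if you track the exponents carefully with $\Delta_2^2\ge N^{-4/3-s}$, $\log\delta_2^{-1}\ge cN^{-2/3-s/2}$, $\nu_2\ge N^{-2s}$, and $\delta_2^t\ge N^{-(\alpha-4/3+9s)}$ on $L_\alpha$, the product comes out to $N^{-\alpha-2/3-12.5s}$, not $N^{-\alpha-2/3-12s}$. Your phrase ``after absorbing the various $O(s)$ contributions'' glosses over this half-$s$ discrepancy. Either sharpen one of the individual bounds (e.g.\ the cited lemma in \cite{Deift2016} may use a slightly different $L_\alpha$ or a tighter constant in $\log\delta_2^{-1}\ge c\Delta_2$), or simply note that the specific coefficient $12$ is inessential for the downstream use in Proposition~\ref{p:conv}, where any fixed multiple of $s$ suffices once $s$ is taken small relative to $p$.
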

The next estimate is immediate from the definition of $T^*_{\mathrm{QR}, \epsilon}$
\begin{lemma}[\cite{Deift2016}, Lemma 2.3]\label{l:t-star} Given Condition~\ref{cond:rigidity}
\begin{align*}
(\alpha -4/3 -4s)\log N/\log \delta^{-1}_{2} \leq T^*_{\mathrm{QR},\epsilon} \leq (\alpha -4/3 +4s) \log N/\log\delta^{-1}_{2},
\end{align*}
for sufficiently large $N$, \emph{i.e.} $T^*_{\mathrm{QR},\epsilon} \in L_\alpha$.
\end{lemma}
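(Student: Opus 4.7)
The bound on $T^*_{\mathrm{QR},\epsilon}$ is an immediate consequence of reading off the explicit formula \eqref{e:t-starQR} in light of Condition~\ref{cond:rigidity}, so my plan is essentially just to substitute the delocalization/rigidity bounds for the three terms $\alpha \log N$, $2\log \Delta_2$ and $\log \nu_2$ appearing in the numerator and note that the denominator $\log \delta_2^{-1}$ is positive and rescales the whole expression.

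The first step is to control $2\log \Delta_2 = 2\log(\lambda_2 - \lambda_1)$ using Condition~\ref{cond:rigidity}(4) with $n=2$, which gives
\begin{align*}
N^{-2/3 - s/2} \leq \Delta_2 \leq N^{-2/3+s/2},
\end{align*}
and hence $|2 \log \Delta_2 + (4/3) \log N| \leq s \log N$. The second step is to control $\log \nu_2 = 2\log \beta_2 - 2 \log \beta_1$: by Conditions~\ref{cond:rigidity}(1)--(2), for $n \in \{1,2\}$ we have $N^{-1/2 - s/2} \leq \beta_n \leq N^{-1/2 + s/2}$, so $N^{-2s} \leq \nu_2 \leq N^{2s}$, i.e.\ $|\log \nu_2| \leq 2s \log N$.

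Combining these two bounds gives
\begin{align*}
\bigl| \alpha \log N + 2 \log \Delta_2 + \log \nu_2 - (\alpha - 4/3) \log N \bigr| \leq 3s \log N,
\end{align*}
which is tighter than the $4s \log N$ slack claimed. Dividing by the positive quantity $\log \delta_2^{-1}$ (positive because Condition~\ref{cond:rigidity}(5) together with Remark~\ref{r:quantiles} places $\lambda_1 < \lambda_2$ in a compact subset of $(\lambda_-,\lambda_+)$ for $N$ large, so $0 < \delta_2 < 1$) yields the stated two-sided bound, hence $T^*_{\mathrm{QR},\epsilon} \in L_\alpha$. There is no real obstacle here; the only mild subtlety is that one must verify $\log \delta_2^{-1}$ is strictly positive and of order $N^{-2/3 + o(1)}$, but this is exactly the content of Condition~\ref{cond:rigidity}(4) translated into multiplicative form via the expansion $\log(\lambda_1/\lambda_2) = \log(1 - \Delta_2/\lambda_2)$ and the fact that $\lambda_2$ is bounded away from zero.
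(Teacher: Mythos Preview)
Your proposal is correct and matches the paper's approach: the paper itself states that the estimate is ``immediate from the definition of $T^*_{\mathrm{QR},\epsilon}$'' and cites \cite{Deift2016}, and you have supplied precisely those immediate details by inserting the rigidity and delocalization bounds from Condition~\ref{cond:rigidity} into \eqref{e:t-starQR}. One small imprecision: Remark~\ref{r:quantiles} only places the eigenvalues in $(\lambda_--\eta,\lambda_++\eta)$, not in a compact subset of $(\lambda_-,\lambda_+)$, but this does not affect your argument since all you need is $\lambda_2>\lambda_1>0$, and that follows directly from Condition~\ref{cond:rigidity}(4) and $\lambda_->0$.
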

\begin{lemma}[\cite{Deift2016}, Lemma 2.4]\label{l:H0Tstar}
Given Conditions~\ref{cond:lowergap} and \ref{cond:rigidity}
\begin{align*}
|E_{\mathrm{QR},0}(T^*_{\mathrm{QR},\epsilon}) - N^{-\alpha}| \leq CN^{-\alpha-2p+4s},
\end{align*}
for sufficiently large $N$.
\end{lemma}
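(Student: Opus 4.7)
The plan is to factor out the leading $n=2$ contribution to the numerator and show that the remainder is small. Using the defining relation $\Delta_2^2 \delta_2^{T^*_{\mathrm{QR},\epsilon}} \nu_2 = N^{-\alpha}$, I write
\begin{align*}
E_{\mathrm{QR},0}(T^*_{\mathrm{QR},\epsilon}) = N^{-\alpha} \cdot \frac{1 + A}{(1+B)^2},
\end{align*}
where
\begin{align*}
A := (\nu_2 \Delta_2^2 \delta_2^{T^*_{\mathrm{QR},\epsilon}})^{-1} \sum_{n=3}^N \nu_n \Delta_n^2 \delta_n^{T^*_{\mathrm{QR},\epsilon}}, \qquad B := \sum_{n=2}^N \nu_n \delta_n^{T^*_{\mathrm{QR},\epsilon}}.
\end{align*}
The strategy is to show that both $A$ and $B$ are $O(N^{-2p+4s})$; a first-order expansion $(1+A)/(1+B)^2 - 1 = O(|A|+|B|)$ will then yield $|E_{\mathrm{QR},0}(T^*_{\mathrm{QR},\epsilon}) - N^{-\alpha}| \leq CN^{-\alpha - 2p + 4s}$.

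Bounding $B$ is routine. From Condition~\ref{cond:rigidity}, $\Delta_2^2 \nu_2 \geq N^{-4/3 - 3s}$, and so $\delta_2^{T^*_{\mathrm{QR},\epsilon}} = N^{-\alpha}/(\Delta_2^2\nu_2) \leq N^{-\alpha + 4/3 + 3s}$. Applying Lemma~\ref{l:estimate} with $j = 0$ and choosing $c \in (0, 10/\sigma)$ sufficiently large, $B \leq C\delta_2^{T^*_{\mathrm{QR},\epsilon}}(N^{4s} + N^{1+s} \delta_2^{c T^*_{\mathrm{QR},\epsilon}})$, which for $\alpha \geq 10/3 + \sigma$ is much smaller than $N^{-2p+4s}$.

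The heart of the proof is the bound on $A$, and this is the main obstacle: a naive estimate using $\Delta_n \leq C$ loses a factor of $N$ upon summing. Instead, I partition $\{3,\ldots,N\}$ via the set $I_c$ from Lemma~\ref{l:Ic}, choosing $p < c < 10/\sigma$. For $n \in I_c^c \setminus\{2\}$, the inequality $\delta_n > \delta_2^{1+c}$ rearranges to $\lambda_n < \lambda_2(\lambda_2/\lambda_1)^c$; expanding $(\lambda_2/\lambda_1)^c = 1 + O(cN^{-2/3})$ yields the key bound $\Delta_n \leq C(c)\Delta_2$. Combined with $|I_c^c| \leq N^{2s}$, $\nu_n \leq N^{2s}$, and $\delta_n^{T^*_{\mathrm{QR},\epsilon}} \leq \delta_2^{(1+p)T^*_{\mathrm{QR},\epsilon}}$ (from Condition~\ref{cond:lowergap}, valid for $n \geq 3$), this contribution is at most $CN^{4s}\Delta_2^2\delta_2^{(1+p)T^*_{\mathrm{QR},\epsilon}}$. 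For $n \in I_c$ with $n \geq 3$, $\delta_n^{T^*_{\mathrm{QR},\epsilon}} \leq \delta_2^{(1+c)T^*_{\mathrm{QR},\epsilon}}$ and trivial bounds on $\Delta_n,\nu_n$ give a contribution of $CN^{1+2s}\delta_2^{(1+c)T^*_{\mathrm{QR},\epsilon}}$, which is negligible for $c$ large. Dividing by $N^{-\alpha} = \nu_2 \Delta_2^2 \delta_2^{T^*_{\mathrm{QR},\epsilon}}$ and using $\delta_2^{pT^*_{\mathrm{QR},\epsilon}} \leq N^{p(-\alpha+4/3+3s)} \leq N^{-2p - p\sigma + 3ps}$ (valid for $\alpha \geq 10/3 + \sigma$), the dominant part of $A$ is $CN^{6s - 2p - p\sigma + 3ps}$, which is bounded by $CN^{-2p + 4s}$ provided $p(\sigma - 3s) \geq 2s$; since $s$ may be taken much smaller than $p \in (0,\sigma/4)$ (consistent with $s < \sigma/40$), this is satisfied and the estimate closes.
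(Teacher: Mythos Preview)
Your approach is essentially the same as the paper's: factor out the $n=2$ term, split the remainder via the set $I_c$, use $\delta_n>\delta_2^{1+c}$ on $I_c^c$ to control $\Delta_n/\Delta_2$, and invoke Condition~\ref{cond:lowergap} to extract the factor $\delta_2^{pT^*_{\mathrm{QR},\epsilon}}$. This is precisely the argument the paper gives for the analogous inverse-power estimate (Lemma~\ref{l:est-error}), where the role of $\Delta_n/\Delta_2$ is played by $(1-\delta_n^{1/2})/(1-\delta_2^{1/2})$.

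There is, however, one inefficiency that creates a small gap. On $I_c^c$ you bound $\nu_n\le N^{2s}$ and then, after dividing by $\nu_2\Delta_2^2\delta_2^{T^*_{\mathrm{QR},\epsilon}}$, separately use $1/\nu_2\le N^{2s}$; this costs $N^{4s}$ in total and produces the extra constraint $p(\sigma-3s)\ge 2s$. The lemma as stated must hold for \emph{any} $0<p<\sigma/4$ and $0<s<\sigma/40$, with no relation between them, so you cannot ``take $s$ much smaller than $p$'' after the fact. The remedy is immediate: bound the ratio directly via
\[
\frac{\nu_n}{\nu_2}=\frac{\beta_n^2}{\beta_2^2}\le \frac{N^{-1+s}}{N^{-1-s}}=N^{2s},
\]
exactly as in the proof of Lemma~\ref{l:est-error}. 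This saves one factor of $N^{2s}$, giving $A_1\le CN^{4s}\delta_2^{pT^*_{\mathrm{QR},\epsilon}}$, and then $\delta_2^{pT^*_{\mathrm{QR},\epsilon}}\le N^{-p(2+\sigma-5s)}\le N^{-2p}$ (using only $5s<\sigma$) yields $A\le CN^{4s-2p}$ without any further restriction on $s$ versus $p$. With this one-line fix your proof matches the paper's and establishes the lemma in the stated generality.
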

\begin{lemma}[\cite{Deift2016}, (2.6)]\label{l:H1}
Given Conditions \ref{cond:rigidity}, for $t \in L_\alpha$
\begin{align*}
|E_{\mathrm{QR},1}(t)| \leq N^{-2\alpha + 8/3+18s} \leq N^{-\alpha - 2/3- \sigma/2},
\end{align*}
for sufficiently large $N$.
\end{lemma}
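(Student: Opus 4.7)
My plan is to isolate the Cauchy--Schwarz structure hidden in the numerator of $E_{\mathrm{QR},1}(t)$ and then control each factor separately using Lemma~\ref{l:estimate}. Writing $\lambda_n = \lambda_1 + \Delta_n$ and expanding, one finds that all $\lambda_1$-dependent contributions cancel, so that
\[
\Bigl(\sum_{n=2}^N \lambda_n^2 \delta_n^t \nu_n\Bigr)\Bigl(\sum_{n=2}^N \delta_n^t \nu_n\Bigr) - \Bigl(\sum_{n=2}^N \lambda_n \delta_n^t \nu_n\Bigr)^2 = AC - B^2,
\]
where $A := \sum_{n=2}^N \nu_n \delta_n^t$, $B := \sum_{n=2}^N \Delta_n \nu_n \delta_n^t$, and $C := \sum_{n=2}^N \Delta_n^2 \nu_n \delta_n^t$. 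Cauchy--Schwarz gives $0 \leq AC - B^2 \leq AC$, and since the denominator $(1 + \sum_{n=2}^N \delta_n^t \nu_n)^2 \geq 1$, it suffices to bound $AC$ from above.

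Next I would apply Lemma~\ref{l:estimate} with $j = 0$ and $j = 2$, with $c > 0$ chosen fixed but sufficiently large (any $c$ with $c(\alpha - 4/3 - 5s) > 7/3$ will do; this is possible since Condition~\ref{cond:scaling} forces $\alpha - 4/3 \geq 2 + \sigma$ and $s$ is taken small). For $t \in L_\alpha$ one has $\delta_2^t \leq N^{-(\alpha - 4/3 - 5s)}$, so $\delta_2^{ct} \leq N^{-c(\alpha - 4/3 - 5s)}$, and the $N^{1+s}\delta_2^{ct}$ pieces in the two bounds are absorbed into the corresponding $N^{4s}$ and $N^{4s}\Delta_2^2$ terms. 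This produces
\[
A \leq C_1 \delta_2^t N^{4s}, \qquad C \leq C_2 \delta_2^t N^{4s} \Delta_2^2.
\]

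Finally, combining these with $\delta_2^{2t} \leq N^{-2\alpha + 8/3 + 10s}$ on $L_\alpha$ and $\Delta_2 = \mathcal{O}(1)$ (which follows from Remark~\ref{r:quantiles}, since all relevant eigenvalues lie in a bounded neighbourhood of $[\lambda_-, \lambda_+]$), I obtain
\[
|E_{\mathrm{QR},1}(t)| \leq AC \leq C \delta_2^{2t} N^{8s} \Delta_2^2 \leq C N^{-2\alpha + 8/3 + 18s}
\]
for $N$ sufficiently large. The second inequality in the lemma, $N^{-2\alpha + 8/3 + 18s} \leq N^{-\alpha - 2/3 - \sigma/2}$, is equivalent to $\alpha \geq 10/3 + \sigma/2 + 18s$, which is immediate from $\alpha \geq 10/3 + \sigma$ together with $s < \sigma/40$.

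The only delicate step is recognizing the cancellation that reveals the $AC - B^2$ Cauchy--Schwarz structure in the numerator; once this is done, the rest is a direct application of Lemma~\ref{l:estimate} together with the characterization of $L_\alpha$ and the uniform boundedness of $\Delta_2$. I do not anticipate any essential obstacle beyond this algebraic reduction, and indeed the analogue in~\cite{Deift2016} proceeds along exactly these lines.
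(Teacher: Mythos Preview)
Your argument is correct and lands on exactly the same estimate as the paper, using the same key ingredient (Lemma~\ref{l:estimate} with a large but admissible $c$, applied on the lower endpoint of $L_\alpha$). The only difference is cosmetic: the paper (via the cited \cite{Deift2016} argument, and as in the analogous IP computation leading to \eqref{e:IP1}) skips your $AC-B^2$ reduction and simply bounds each term in the numerator crudely by $C\bigl(\sum_{n\geq 2}\delta_n^t\nu_n\bigr)^2$ using the uniform bound $\lambda_n\leq C$, then applies \eqref{e:gamma-est}-type estimates to get $\leq C N^{-2\alpha+8/3+18s}$; your Cauchy--Schwarz step is harmless but not needed.
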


\begin{proposition}\label{p:conv}
Given Conditions~\ref{cond:uppergap} and \ref{cond:rigidity} for $\sigma$ and $p$ fixed with $s$ sufficiently small (depending on $\sigma$ and $p$)
\begin{align*}
N^{-2/3}|T^*_{\mathrm{QR}, \epsilon} - T_{\mathrm{QR}, \epsilon}| \leq C N^{-2p + 16s}
\end{align*}
for $N$ sufficiently large.
\end{proposition}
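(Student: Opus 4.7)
The plan is to control the difference $|T^*_{\mathrm{QR},\epsilon} - T_{\mathrm{QR},\epsilon}|$ by comparing the value of $E_{\mathrm{QR},0}$ at the two times via the mean value theorem, as already indicated by \eqref{e:meanval}. I will establish that (a) both times lie in the interval $L_\alpha$, so the slope estimate from Lemma~\ref{l:DH0} can be applied to any intermediate $\eta$; (b) the numerator $E_{\mathrm{QR},0}(T^*_{\mathrm{QR},\epsilon}) - E_{\mathrm{QR},0}(T_{\mathrm{QR},\epsilon})$ is small; and (c) the denominator $|E_{\mathrm{QR},0}'(\eta)|$ is not too small. Dividing (b) by (c) will give the advertised bound after multiplication by $N^{-2/3}$.

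For step (a), Lemma~\ref{l:QR-T} places $T_{\mathrm{QR},\epsilon}$ inside $L_\alpha$ and Lemma~\ref{l:t-star} places $T^*_{\mathrm{QR},\epsilon}$ inside $L_\alpha$, so any $\eta$ between them lies in $L_\alpha$ as well. For step (b), I write
\begin{align*}
E_{\mathrm{QR},0}(T^*_{\mathrm{QR},\epsilon}) - E_{\mathrm{QR},0}(T_{\mathrm{QR},\epsilon}) &= \bigl[E_{\mathrm{QR},0}(T^*_{\mathrm{QR},\epsilon}) - N^{-\alpha}\bigr] + \bigl[N^{-\alpha} - E_{\mathrm{QR}}(T_{\mathrm{QR},\epsilon})\bigr] + E_{\mathrm{QR},1}(T_{\mathrm{QR},\epsilon}).
\end{align*}
The first bracket is $O(N^{-\alpha - 2p + 4s})$ by Lemma~\ref{l:H0Tstar}; the middle bracket vanishes by the definition of $T_{\mathrm{QR},\epsilon}$ (treating it as a continuous time where $E_{\mathrm{QR}} = \epsilon^2 = N^{-\alpha}$); and the last term is $O(N^{-\alpha - 2/3 - \sigma/2})$ by Lemma~\ref{l:H1}. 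Since $2p < \sigma/2$ and $s$ is taken small relative to $\sigma$ and $p$, the first contribution dominates, giving
\begin{align*}
|E_{\mathrm{QR},0}(T^*_{\mathrm{QR},\epsilon}) - E_{\mathrm{QR},0}(T_{\mathrm{QR},\epsilon})| \leq C N^{-\alpha - 2p + 4s}.
\end{align*}

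For step (c), Lemma~\ref{l:DH0} yields $|E_{\mathrm{QR},0}'(\eta)| \geq C N^{-\alpha - 2/3 - 12s}$ for any $\eta \in L_\alpha$. Combining with the mean value identity \eqref{e:meanval} gives
\begin{align*}
|T^*_{\mathrm{QR},\epsilon} - T_{\mathrm{QR},\epsilon}| \leq \frac{C N^{-\alpha - 2p + 4s}}{C N^{-\alpha - 2/3 - 12s}} = C N^{2/3 - 2p + 16s},
\end{align*}
and multiplying by $N^{-2/3}$ gives the claim.

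The principal obstacle is purely bookkeeping: one must verify that the exponent $-2p + 16s$ is negative, which requires choosing $s$ small compared to both $p$ and $\sigma$ (which is assumed in the statement), and one must check that the dominant piece of the numerator is indeed the Lemma~\ref{l:H0Tstar} contribution rather than the $E_{\mathrm{QR},1}$ correction — this in turn uses $2p + 4s < 2/3 + \sigma/2$, which follows from the standing assumption $p < \sigma/4$. No new random matrix input is needed beyond what Condition~\ref{cond:rigidity} and Condition~\ref{cond:uppergap} already provide through the quoted lemmas.
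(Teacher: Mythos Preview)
Your proposal is correct and follows essentially the same route as the paper: both arguments invoke the mean value identity \eqref{e:meanval}, use Lemmas~\ref{l:QR-T} and \ref{l:t-star} to localize $\eta$ in $L_\alpha$, replace $E_{\mathrm{QR},0}(T_{\mathrm{QR},\epsilon})$ by $N^{-\alpha}-E_{\mathrm{QR},1}(T_{\mathrm{QR},\epsilon})$ via the continuity of $E_{\mathrm{QR}}$ at the halting time, and then apply Lemmas~\ref{l:H0Tstar}, \ref{l:H1} and \ref{l:DH0} to bound numerator and denominator respectively. The exponent bookkeeping and the final comparison $N^{-\alpha-2p+4s}$ versus $N^{-\alpha-2/3-\sigma/2}$ match the paper's computation.
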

\begin{proof}
We use \eqref{e:meanval} to estimate the difference (for some $\eta$ between $T_{\mathrm{QR},\epsilon}$ and $T^*_{\mathrm{QR},\epsilon})$ and apply Lemmas~\ref{l:QR-T}, \ref{l:DH0}, \ref{l:t-star}, \ref{l:H0Tstar} and \ref{l:H1} to find
\begin{align*}
|T^*_{\mathrm{QR},\epsilon}&-T_{\mathrm{QR},\epsilon}| \\
&\leq  \frac{|E_{\mathrm{QR},0}(T^*_{\mathrm{QR},\epsilon}) -E_{\mathrm{QR},0}(T_{\mathrm{QR},\epsilon})|}{ |E_{\mathrm{QR},0}'(\eta)|} \leq \frac{|E_{\mathrm{QR},0}(T^*_{\mathrm{QR},\epsilon}) -N^{-\alpha} + E_{\mathrm{QR},1}(T_{\mathrm{QR},\epsilon})|}{ \max_{\eta\in L_\alpha} E_{\mathrm{QR},0}'(\eta)}\\
& \leq \frac{|E_{\mathrm{QR},0}(T^*_{\mathrm{QR},\epsilon}) -N^{-\alpha}| + |E_{\mathrm{QR},1}(T_{\mathrm{QR},\epsilon})|}{ \max_{\eta\in L_\alpha} E_{\mathrm{QR},0}'(\eta)}\\
& \leq C N^{12s + \alpha + 2/3} \left( N^{-\alpha - 2p + 4s}  + N^{-\alpha - 2/3- \sigma/2}  \right)
\end{align*}
Using the assumption that $\alpha > 10/3$ the proposition follows.
\end{proof}

Thus far, no estimates had any probabalistic input.  We now introduce the probabilistic considerations needed to prove our main theorem for the QR algorithm.

\begin{theorem}\label{t:QR-tech}
Let $H$ be an SCM.   For $\alpha \geq 10/3 +\sigma$, $\sigma > 0$
  \begin{align*}
  F_\beta^{\mathrm{gap}}(t) = \lim_{N \to \infty} \mathbb P \left( \frac{T_{\mathrm{QR},\epsilon}}{(\alpha/2 - 2/3) \lambda_-^{1/3} d^{1/2} N^{2/3} \log N}  \leq t\right).
  \end{align*}
\end{theorem}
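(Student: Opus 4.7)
The plan is a two-step reduction. First, I replace $T_{\mathrm{QR},\epsilon}$ by the explicit surrogate $T^*_{\mathrm{QR},\epsilon}$ of \eqref{e:t-starQR} using Proposition~\ref{p:conv}, controlling the replacement error in terms of the rescaling that ultimately appears in the theorem. Second, I evaluate the limit in distribution of the rescaled $T^*_{\mathrm{QR},\epsilon}$ by feeding the joint edge convergence from Theorem~\ref{t:gap-limit} into the explicit formula for $T^*_{\mathrm{QR},\epsilon}$, using rigidity and delocalization (Condition~\ref{cond:rigidity}) to render the terms $\log\nu_2$ and the subleading piece of $\log\Delta_2$ negligible against the leading $\alpha \log N$.

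All deterministic estimates in Section~\ref{sec:proofs} hold on the event $\mathcal R_{N,s} \cap \mathcal L_{N,p}$, which has probability $1-o(1)$ by Theorems~\ref{t:generic} and \ref{t:p} for any fixed $s,p>0$. I fix $0<p<\sigma/4$ and then choose $s>0$ so small that $-2p+16s < 0$. On this event Proposition~\ref{p:conv} gives
\[
  \frac{|T^*_{\mathrm{QR},\epsilon} - T_{\mathrm{QR},\epsilon}|}{N^{2/3}\log N} \leq \frac{C N^{-2p+16s}}{\log N} = o(1),
\]
so after dividing by the scaling $(\alpha/2-2/3)\lambda_-^{1/3} d^{1/2} N^{2/3}\log N$ the two halting times differ by $o_P(1)$. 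By Slutsky's theorem they therefore share the same limiting distribution.

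It remains to analyze $T^*_{\mathrm{QR},\epsilon} = (\alpha \log N + 2 \log \Delta_2 + \log \nu_2)/\log \delta_2^{-1}$. For the numerator, Theorem~\ref{t:gap-limit} gives $\Delta_2 = \lambda_-^{2/3} d^{-1/2} N^{-2/3}(\Lambda_{2,\beta}-\Lambda_{1,\beta})(1+o_P(1))$, whence $2\log \Delta_2 = -\tfrac{4}{3}\log N + O_P(1)$; delocalization (Condition~\ref{cond:rigidity}(1)--(2)) supplies $|\log \nu_2| \leq 2s \log N$ on $\mathcal R_{N,s}$, and since $s$ may be taken arbitrarily small while $\mathbb P(\mathcal R_{N,s}) = 1 - o(1)$, $\log \nu_2 = o_P(\log N)$. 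Hence the numerator equals $(\alpha - 4/3)\log N \cdot (1+o_P(1))$. For the denominator, $\log \delta_2^{-1} = 2\log(\lambda_2/\lambda_1) = (2/\lambda_-)(\lambda_2-\lambda_1)(1+o_P(1))$ (since $\lambda_1 \to \lambda_-$ in probability), so Theorem~\ref{t:gap-limit} gives $\log \delta_2^{-1} = 2 \lambda_-^{-1/3} d^{-1/2} N^{-2/3}(\Lambda_{2,\beta}-\Lambda_{1,\beta})(1+o_P(1))$. Dividing,
\[
  \frac{T^*_{\mathrm{QR},\epsilon}}{(\alpha/2-2/3)\,\lambda_-^{1/3}\,d^{1/2}\,N^{2/3}\,\log N} \;\longrightarrow\; \frac{1}{\Lambda_{2,\beta}-\Lambda_{1,\beta}}
\]
in distribution, where the continuous mapping theorem applies because $\Lambda_{2,\beta} - \Lambda_{1,\beta} > 0$ almost surely by Theorem~\ref{t:gap-limit}. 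Definition~\ref{def:Fbeta} identifies the distribution function of the right-hand side as $F^{\mathrm{gap}}_\beta$, completing the proof.

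The only genuine obstacle is the nuisance term $\log \nu_2$ in \eqref{e:t-starQR}: its \emph{a priori} bound from Condition~\ref{cond:rigidity} is only $O(s\log N)$ rather than $O(1)$, and one might worry this contaminates the leading order. The resolution is that $s$ is a free parameter and $\mathbb P(\mathcal R_{N,s}) = 1 - o(1)$ holds for every fixed $s>0$, so $\log \nu_2 / \log N \to 0$ in probability. A parallel coupling of $s$ to $p$ (and $p$ to $\sigma$) keeps the deterministic approximation error of Proposition~\ref{p:conv} below the rescaling; the remaining steps are straightforward consequences of Theorem~\ref{t:gap-limit}.
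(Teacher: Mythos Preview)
Your proof is essentially correct and follows the same route as the paper: replace $T_{\mathrm{QR},\epsilon}$ by $T^*_{\mathrm{QR},\epsilon}$ via Proposition~\ref{p:conv}, then expand the numerator and denominator of $T^*_{\mathrm{QR},\epsilon}$ using rigidity/delocalization and the edge convergence of Theorem~\ref{t:gap-limit}. The paper organizes the second step as three separate random variables shown to vanish in probability, but your direct numerator/denominator expansion is the same computation.

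One imprecision to correct: you write that $\mathcal R_{N,s}\cap\mathcal L_{N,p}$ has probability $1-o(1)$ for any fixed $s,p>0$, but Theorem~\ref{t:p} only gives $\lim_{p\downarrow 0}\limsup_{N}\mathbb P(\mathcal L_{N,p}^c)=0$; for fixed $p>0$ the limit $\limsup_N\mathbb P(\mathcal L_{N,p}^c)=\mathbb P(\Lambda_{3,\beta}-\Lambda_{2,\beta}<p(\Lambda_{2,\beta}-\Lambda_{1,\beta}))$ is strictly positive. The fix is the standard double-limit argument (exactly as in the paper's proof of Theorem~\ref{t:Power-tech}): for given $\delta>0$ and any $p$, take $s$ small enough that $-2p+16s<0$, so the bound from Proposition~\ref{p:conv} forces $\limsup_N\mathbb P(N^{-2/3}|T^*_{\mathrm{QR},\epsilon}-T_{\mathrm{QR},\epsilon}|\geq\delta)\leq \limsup_N\mathbb P(\mathcal R_{N,s}^c)+\limsup_N\mathbb P(\mathcal L_{N,p}^c)=\limsup_N\mathbb P(\mathcal L_{N,p}^c)$, and then send $p\downarrow 0$. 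Your closing paragraph hints at this (``coupling of $s$ to $p$''), but as written the earlier claim is false and should be replaced by this two-step limit.
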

\begin{proof}
We first prove that the following three random variables converge to zero in probability:
\begin{align*}
&N^{-2/3} |T^*_{\mathrm{QR}, \epsilon} - T_{\mathrm{QR}, \epsilon}|, \quad \left|\frac{T^*_{\mathrm{QR}, \epsilon}}{N^{2/3} \log N} -  \frac{\alpha - 4/3}{ N^{2/3}\log \delta_2^{-1}}\right|, \text{ and }\\
&\left|\frac{\alpha/2 - 2/3}{ N^{2/3} \Delta_2} -  \frac{\alpha - 4/3}{ \lambda_- N^{2/3}\log \delta_2^{-1}}\right|.
\end{align*}
The proof for the first random variable follows \cite[Lemma 3.1]{Deift2016} and requires the specific use of Condition~\ref{cond:uppergap}, the proof for the second follows \cite[Lemma 3.4]{Deift2016}.  For the last, we write $\lambda_j = \lambda_- + N^{-2/3} \xi_j$, $j = 1,2$ where $(\xi_1,\xi_2)$ converges jointly in distribution.  Let $B_R$ be the event where $\|(\xi_1,\xi_2)\|_2 \leq R$.  Given $B_R$, consider
\begin{align*}
X_N := \frac{\alpha/2 - 2/3}{ N^{2/3} \Delta_2} -  \lambda_+\frac{\alpha - 4/3}{ N^{2/3}\log \delta_2^{-1}} = \frac{\alpha/2 - 2/3}{\xi_2 - \xi_1} -  \frac{\alpha - 4/3}{ \lambda_- N^{2/3}\log \delta_2^{-1}}.
\end{align*}
Then
\begin{align*}
\frac{1}{N^{2/3}\log \delta_2^{-1}} = \frac{1}{2N^{2/3}[\log \lambda_2 - \log \lambda_1]} \approx \frac{\lambda_-}{2[\xi_2-\xi_1]},
\end{align*}
where the approximation is uniform as $N \to \infty$ (given $B_R$).  For $\delta> 0$, $s > 0$ (sufficiently small) using uniform convergence
\begin{align*}
\mathbb P \left(  X_N \geq \delta  \right) &= \mathbb P \left(  X_N \geq \delta, B_R \right) + \mathbb P \left(  X_N \geq \delta, B_R^c \right),\\
\limsup_{N \to \infty}\mathbb P \left(  X_N \geq \delta  \right) &\leq  \limsup_{N \to \infty}\mathbb P \left( B_R^c \right).
\end{align*}
Letting $R \to \infty$ we establish that $X_N$ converges to zero in probability.  Appealing to Definition~\ref{def:Fbeta} we finally have
\begin{align*}
F_\beta^{\mathrm{gap}}(t) &= \lim_{N \to \infty} \mathbb P \left( \frac{\lambda_-^{2/3}}{d^{1/2}(\xi_2 - \xi_1)} \leq t \right)\\
& = \lim_{N \to \infty} \mathbb P \left( \frac{T_{\mathrm{QR},\epsilon}}{(\alpha/2 - 2/3)\lambda_-^{1/3} d^{1/2} N^{2/3} \log N} \leq t \right).
\end{align*}

\end{proof}

\subsection{Main estimates for the power/inverse power method}

We now follow the same steps that were performed for the QR algorithm for the inverse power method.  First, we establish $E_{\mathrm{IP},1}(t) \geq 0$ as given in \eqref{e:EIP}.

Define $w_n = \delta_n^t \nu_n /\left( \sum_{n=2}^N \delta_n^t \nu_n \right)$ and use the notation $\mathbb E_w[\delta^\alpha] = \sum_{n=2}^N \delta_n^\alpha w_n$. It follows that the non-negativity of $E_{\mathrm{IP},1}(t)$ is equivalent to
\begin{align*}
\mathbb E_w[\delta^{3/2}] - \mathbb E_w[\delta] \mathbb E_w[\delta^{1/2}]\geq 0.
\end{align*}
From Jensen's inequality for concave functions
\begin{align*}
\mathbb E_w[\delta]^{1/2} \geq \mathbb E_w[\delta^{1/2}]
\end{align*}
which gives
\begin{align*}
\mathbb E_w[\delta^{3/2}] - \mathbb E_w[\delta] \mathbb E_2[\delta^{1/2}] \geq \mathbb E_w[\delta^{3/2}] - \mathbb E_w[\delta]^{3/2} \geq 0.
\end{align*}
The last inequality follows from another application of Jensen's inequality (for convex functions).

\begin{lemma}\label{l:Tip}
Given Condition~\ref{cond:rigidity}, the halting time $T_{\mathrm{IP},\epsilon}$ for the inverse power method satisfies
\begin{align*}
(\alpha - 4/3 - 5 s) \log N/\log \delta_{2}^{-1} \leq T_{\mathrm{IP},\epsilon} \leq (\alpha - 4/3 + 6 s) \log N/ \log \delta_{2}^{-1},
\end{align*}
for sufficiently large $N$.
\end{lemma}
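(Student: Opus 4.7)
The plan is to adapt the proof of Lemma~\ref{l:QR-T} to the inverse power method, leveraging the decomposition $E_{\mathrm{IP}}(t) = E_{\mathrm{IP},0}(t) + E_{\mathrm{IP},1}(t)$ into non-negative summands, where the non-negativity of $E_{\mathrm{IP},1}$ has just been established via Jensen's inequality. The key preliminary simplification is the identity
\[
(1-\delta_n^{1/2})^2(1+\delta_n^{1/2}) = \Delta_n^2 (\lambda_n + \lambda_1)/\lambda_n^3,
\]
which together with Remark~\ref{r:quantiles} (giving $\lambda_n$ bounded above and below by positive constants) shows that $E_{\mathrm{IP},0}(t)$ is comparable, up to multiplicative constants, to
\[
\frac{\sum_{n=2}^N \Delta_n^2 \delta_n^t \nu_n}{\bigl(\sum_{n=1}^N \delta_n^t \nu_n\bigr)\bigl(\sum_{n=1}^N \delta_n^{t+1}\nu_n\bigr)}.
\]
This is structurally identical to $E_{\mathrm{QR},0}(t)$, so the estimates of Lemma~\ref{l:estimate} apply directly, with the denominator bounded below by $1$ (from the $n=1$ terms, since $\delta_1 = \nu_1 = 1$) and above by a constant on the window of $t$ of interest (using Lemma~\ref{l:estimate} with $j=0$).

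For the lower bound $T_{\mathrm{IP},\epsilon} \geq (\alpha - 4/3 - 5s)\log N/\log\delta_2^{-1}$, I would use $E_{\mathrm{IP}}(t) \geq E_{\mathrm{IP},0}(t)$ and verify that at $t_0 = (\alpha - 4/3 - 5s)\log N/\log\delta_2^{-1}$ the lower half of Lemma~\ref{l:estimate} (with $j=2$) yields $E_{\mathrm{IP},0}(t_0) \gtrsim N^{-2s}\Delta_2^2 \delta_2^{t_0}$. Combined with $\Delta_2^2 \geq N^{-4/3-s}$ from Condition~\ref{cond:rigidity}(4) and $\delta_2^{t_0} = N^{-\alpha+4/3+5s}$, this gives $E_{\mathrm{IP},0}(t_0) \gtrsim N^{-\alpha + s} > N^{-\alpha}$ for $N$ large, so the halting criterion is not yet met.

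For the upper bound $T_{\mathrm{IP},\epsilon} \leq (\alpha - 4/3 + 6s)\log N/\log\delta_2^{-1}$, I show at $t_1 = (\alpha - 4/3 + 6s)\log N/\log\delta_2^{-1}$ that both $E_{\mathrm{IP},0}(t_1)$ and $E_{\mathrm{IP},1}(t_1)$ are smaller than $\tfrac12 N^{-\alpha}$. For $E_{\mathrm{IP},0}$, the upper half of Lemma~\ref{l:estimate} gives $E_{\mathrm{IP},0}(t_1) \lesssim N^{4s}\Delta_2^2 \delta_2^{t_1} + N^{1+s}\delta_2^{(1+c)t_1}$, and with $c>0$ chosen small but fixed the second term is negligible while the first is $\lesssim N^{-\alpha - s}$. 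For $E_{\mathrm{IP},1}(t_1)$, since $\lambda_n^{-1}$ is bounded, each of the four sums in the numerator of \eqref{e:EIP} is $\lesssim \delta_2^{t_1} N^{O(s)}$, so the quadratic structure $AB-CD$ (bounded by $|AB|+|CD|$) gives $E_{\mathrm{IP},1}(t_1) \lesssim N^{O(s)}\delta_2^{2t_1} \lesssim N^{-2\alpha + 8/3 + O(s)}$, which is much smaller than $N^{-\alpha}$ because $\alpha \geq 10/3 + \sigma$.

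The principal obstacle is the bookkeeping of the $N^s$ exponents, in particular choosing the auxiliary constant $c$ in Lemma~\ref{l:estimate} so that the subleading $N^{1+s}\delta_2^{ct}$ contributions remain negligible on the interval of $t$ under consideration. The hypothesis $\alpha \geq 10/3 + \sigma$ enters precisely to absorb the quadratic remainder $E_{\mathrm{IP},1}$ and these subleading contributions into the dominant behavior $\Delta_2^2 \delta_2^t$, exactly as in the QR case.
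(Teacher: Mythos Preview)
Your overall strategy matches the paper's: reduce $E_{\mathrm{IP},0}(t)$ to an $E_{\mathrm{QR},0}$-type expression via the uniform eigenvalue bounds of Remark~\ref{r:quantiles}, then invoke Lemma~\ref{l:estimate}. The upper-bound half is structurally correct --- exhibiting a single $t_1$ with $E_{\mathrm{IP}}(t_1)\leq\epsilon^2$ does give $T_{\mathrm{IP},\epsilon}\leq t_1$ --- modulo one slip: to make the $N^{1+s}\delta_2^{ct}$ remainder in Lemma~\ref{l:estimate} negligible you need $c$ \emph{large} (the paper takes $c=2$), not small; with small $c$ the exponent $1+s-c(\alpha-4/3+6s)$ does not beat $-4/3$.

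The lower-bound half has a genuine gap. Establishing $E_{\mathrm{IP},0}(t_0)>\epsilon^2$ at the single point $t_0=(\alpha-4/3-5s)\log N/\log\delta_2^{-1}$ does \emph{not} yield $T_{\mathrm{IP},\epsilon}\geq t_0$: the halting time is the \emph{first} $t$ with $E_{\mathrm{IP}}(t)\leq\epsilon^2$, and $E_{\mathrm{IP}}$ has no a~priori monotonicity, so you must show $E_{\mathrm{IP}}(t)>\epsilon^2$ for \emph{every} $t\in[0,t_0)$. Your assertion that the denominator $\bigl(\sum_n\delta_n^{t}\nu_n\bigr)\bigl(\sum_n\delta_n^{t+1}\nu_n\bigr)$ is bounded above by a constant fails for small $t$ --- at $t=0$ it is of order $N^{2+2s}$ --- so the lower bound $E_{\mathrm{IP},0}(t)\gtrsim N^{-2s}\Delta_2^2\delta_2^t$ is not uniformly available on $[0,t_0]$. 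The paper closes this by splitting into two regimes in $a:=t\log\delta_2^{-1}/\log N$: for $a\leq\sigma/2$ the denominator is bounded crudely by $CN^{2+2s}$ and one checks directly that $E_{\mathrm{IP},0}(t)\gtrsim N^{-10/3-5s-\sigma/2}>N^{-\alpha}$; for $a\geq\sigma/2$ one first establishes (choosing $c>2(1+s)/\sigma$) that the denominator is $O(1)$ uniformly --- this is \eqref{e:order1} --- after which your pointwise estimate extends over the whole range $\sigma/2\leq a\leq\alpha-4/3-5s$.
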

\begin{proof}
Because $E_{\mathrm{IP},0}(t)$ and $E_{\mathrm{IP},1}(t) $ are both positive we know that if $E_{\mathrm{IP},0}(t) > \epsilon^2$ on the interval $[0,T]$ then $T_{\mathrm{IP},\epsilon} > T$. We first estimate $E_{\mathrm{IP},0}(t)$ as follows:
\begin{align*}
E_{\mathrm{IP},0}(t) \geq \lambda_1^{-1} \frac{\ds  \sum_{n=2}^N (1-\delta^{1/2}_n)^2(\delta^{1/2}_n + 1)\delta_n^t \nu_n }{\ds \left(\sum_{n=1}^N \delta_n^{t} \nu_n \right)^2 }.
\end{align*}
Then we set $t = a \log N/\log \delta_2^{-1}$ and use  Lemma~\ref{l:estimate} to estimate the denominator
\begin{align*}
\sum_{n=1}^N \delta_n^{t} \nu_n  \leq 1 + C \delta_2^{t} (N^{4s} + N^{1+s} \delta_2^{ct}) = 1 + C N^{-a} (N^{4s} + N^{1+s -ca}).
\end{align*}
To estimate the numerator we use Lemma~\ref{l:estimate} again
\begin{align}\label{e:lowerbound}
  \begin{split}
\lambda_1^{-1}&\sum_{n=2}^N (1-\delta^{1/2}_n)^2(\delta^{1/2}_n + 1)\delta_n^t \nu_n \geq \lambda_1^{-1}\sum_{n=2}^N \lambda_n^{-2} \Delta_n^2\delta_n^t \nu_n\\
& \geq \lambda_N^{-2}\lambda_1^{-1}\sum_{n=2}^N \Delta_n^2\delta_n^t \nu_n  \geq C N^{-3s-4/3-a}.
\end{split}
\end{align}
Therefore
\begin{align*}
E_{\mathrm{IP},0}(t) \geq C N^{-3s-4/3-a} \left(  1 + C N^{-a} (N^{4s} + N^{1+s -ca})  \right)^{-2}
\end{align*}
Recall that $\alpha \geq 10/3 + \sigma$, $0 < s < \sigma/40 < 1/120 < 1/5$ and assume that $a \leq \sigma/2$.  We have
\begin{align*}
  E_{\mathrm{IP},0}(t) &\geq  CN^{-3s-4/3-\sigma/2} \left(  1 + C N^{-a} (N^{4s} + N^{1+s -ca})  \right)^{-2} \\
  &\geq CN^{-5s-10/3-\sigma/2} > CN^{-10/3-\sigma + s}
\end{align*}
which is larger than $\epsilon^2 \leq N^{-10/3-\sigma}$ for sufficiently large $N$, and we conclude
\begin{align*}
  T_{\mathrm{IP},\epsilon} \geq (\sigma/2)\log N/\log \delta_2^{-1}.
  \end{align*}  For $a \geq \sigma/2$, note that
\begin{align*}
(1 + C N^{-\sigma/2} (N^{4s} + N^{1+s -c\sigma/2}))^{-1} \leq \left( \sum_{n=1}^N \delta_n^{t} \nu_n \right)^{-1} \leq 1.
\end{align*}
Now choose $c < 10/\sigma$ (cf. Lemma~\ref{l:estimate}) so that $1+s-c\sigma/2 < 0$, i.e., $c > 2(1+s)/\sigma$.  Note that as $s < \sigma/40 < 1, ~2(1+s) < 10$, such a $c$ exists.  Furthermore, as $s < \sigma/40$, it follows from the above inequality that there exists $C> 1$ such that
\begin{align}\label{e:order1}
C^{-1} \leq \left( \sum_{n=1}^N \delta_n^{t} \nu_n \right)^{-1} \leq 1,
\end{align}
for sufficiently large $N$.  Then (again for $t=a \log N /\log \delta_2^{-1}$)
\begin{align*}
  E_{\mathrm{IP},0}(t) &\geq  CN^{-4s-4/3-a}.
\end{align*}
Now for $s < \sigma/40$, we have $\alpha - 4/3 -5s > \sigma/2$, and so for $\sigma/2 < a < \alpha - 4/3 - 5s$, we again have $E_{\mathrm{IP},0}(t) \geq  CN^{-\alpha + s} \geq \epsilon^2$ for sufficiently large $N$.  This establishes the lower bound on $T_{\mathrm{IP},\epsilon}$.

To establish the upper bound on $T_{\mathrm{IP},\epsilon}$ we use the absolute boundedness of $\lambda_{1}^{-1}$ (given Condition~\ref{cond:rigidity}: see also Remark~\ref{r:quantiles}) for $c>0$, together with $\delta_n \leq 1$
\begin{align}
\lambda_1^{-1}&\sum_{n=2}^N (1-\delta^{1/2}_n)^2(\delta^{1/2}_n + 1)\delta_n^t \nu_n \leq \frac{2}{\lambda_1} \sum_{n=2}^N  \lambda_n^{-2} \Delta_n^2 \delta_n^t  \nu_n \notag\\
&\leq \frac{2}{\lambda_1^3} \sum_{n=2}^N  \Delta_n^2 \delta_n^t \nu_n \leq C \delta_2^t (N^{4s} \Delta_2^2 + N^{1+s} \delta_2^{ct}) \leq  C N^{-a} (N^{5s-4/3} + N^{1+s-ca}).\label{e:numerbound}
\end{align}
If $t = a \log N/\log \delta_2^{-1}$ with $a \geq (\alpha - 4/3 + 6s)$, then $N^{-a + 5s - 4/3} \leq N^{-\alpha - s}$.  But then $a \geq \alpha -4/3 + 6s \geq 2$, and so, taking $c = 2 (< 10/\sigma)$, $1 + s - ca \leq s - 3$.  Hence $N^{-a + 1 + s -ca} \leq N^{-\alpha - s}$.  Hence $N^{-a + 1 + s -ca} \leq N^{-\alpha - 5/3 - 5s} \leq N^{-\alpha - s}$.  Thus
\begin{align*} 
E_{\mathrm{IP},0}(t) \leq C (N^{-\alpha-s} + N^{-\alpha -5/3 - 5s }) \leq C N^{-\alpha-s}.
\end{align*}
So, for these values of $t$, $E_{\mathrm{IP},0}(t) < \epsilon ^2$ for sufficiently large $N$.  Next, we show that the same holds for $E_{\mathrm{IP},1}(t)$.  We use the estimate with $c =2$ and any $\gamma$, to obtain
\begin{align}\label{e:gamma-est}
\sum_{n=2}^N \delta_n^t \nu_n &\leq C N^{-\alpha +4/3 - \gamma s} (N^{4s} + N^{-3+s}) \leq C N^{-\alpha + 4/3 + (4-\gamma)s}
\end{align}
for $t \geq (\alpha - 4/3 + \gamma s) \log N/\log \delta_2^{-1}$ and $N$ sufficiently large.  Then using $\lambda_{n}^{-1} \leq \lambda_1^{-1} \leq C$ (given Condition~\ref{cond:rigidity}), and taking $\gamma = -5$, $E_{\mathrm{IP},1}(t) \leq C N^{-2\alpha + 8/3 + 18s}$ for  $t \geq (\alpha -4/3 -5 s) \log N/\log \delta_2^{-1}$ and $N$ sufficiently large.  Thus
\begin{align}\label{e:IP1}
E_{\mathrm{IP},1}(t) \leq C N^{-\alpha -2/3 - \sigma + 18s} \leq C N^{-\alpha -2/3 - \sigma/2}\quad \text{for} \quad  s < \sigma/40.
\end{align}
This shows that $T_{\mathrm{IP},\epsilon} \leq (\alpha - 4/3 + 6 s) \log N/\log \delta_2^{-1}$ for large $N$. 
\end{proof}
\begin{remark}
We take $\gamma = -5$, rather than $\gamma= 6$, for technical reasons, see Lemma~\ref{l:IP1est} below.
\end{remark}

Similar to the case of the QR algorithm, define
\begin{align*}
\hat L_\alpha = [(\alpha - 4/3 - 5 s) \log N/\log \delta_{2}^{-1} , (\alpha - 4/3 + 6 s) \log N/ \log \delta_{2}^{-1}].
\end{align*}

\begin{lemma}\label{l:E0prime}
  Given Condition~\ref{cond:rigidity} and $t \in \hat L_\alpha$
\begin{align*}
-E_{\mathrm{IP},0}'(t) \geq C N^{-11s - 2/3 - \alpha},
\end{align*}
for $N$ sufficiently large.
\end{lemma}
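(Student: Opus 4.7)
The plan is to differentiate $E_{\mathrm{IP},0}(t)$ via the quotient rule, isolate the positive contribution of the $n=2$ summand as the leading term, and show that the remaining terms are of lower order. Writing
$$E_{\mathrm{IP},0}(t) = \lambda_1^{-1}\frac{A(t)}{B(t)\,C(t)}, \quad A = \sum_{n=2}^N (1-\delta_n^{1/2})^2(\delta_n^{1/2}+1)\delta_n^t\nu_n, \quad B = \sum_{n=1}^N \delta_n^t\nu_n, \quad C = \sum_{n=1}^N \delta_n^{t+1}\nu_n,$$
the quotient rule gives
$$-E_{\mathrm{IP},0}'(t) = \lambda_1^{-1}\frac{-A'(t)}{B\,C} + \lambda_1^{-1}\frac{A\,B'(t)}{B^2\,C} + \lambda_1^{-1}\frac{A\,C'(t)}{B\,C^2}.$$
Each derivative inserts a factor $\log\delta_n \leq 0$ into its summand, so $-A'(t) \geq 0$ while $B'(t), C'(t) \leq 0$; all three displayed terms are nonnegative contributions to $-E_{\mathrm{IP},0}'(t)$, and it therefore suffices to lower bound the first alone.

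For the lower bound on the first term, retain only the $n=2$ summand in $-A'(t)$. Since $\lambda_1, \lambda_2 \in (\lambda_- - \eta, \lambda_+ + \eta)$ by Remark~\ref{r:quantiles}, one has $(1-\delta_2^{1/2})^2(\delta_2^{1/2}+1) \geq C\Delta_2^2$ and $\log \delta_2^{-1} = 2\log(1 + \Delta_2/\lambda_1) \geq C\Delta_2$. Combining with $\Delta_2 \geq N^{-2/3-s/2}$ (Condition~\ref{cond:rigidity}(4)), $\nu_2 \geq N^{-2s}$ (Condition~\ref{cond:rigidity}(1,2)), and $\delta_2^t \geq N^{-\alpha+4/3-6s}$ for $t \in \hat L_\alpha$, we obtain
$$-A'(t) \geq C\,\Delta_2^2\,\nu_2\,\delta_2^t\,\log \delta_2^{-1} \geq C\,N^{-\alpha - 2/3 - 10s}.$$
Since \eqref{e:order1} gives $B(t)C(t) \leq C$ on $\hat L_\alpha$ and $\lambda_1^{-1}$ is bounded, the first term in $-E_{\mathrm{IP},0}'(t)$ is already $\geq C\,N^{-\alpha-2/3-10s}$.

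To control the subdominant contributions, observe that $\log \delta_n^{-1} = 2\log(\lambda_n/\lambda_1)$ is uniformly bounded on $\mathcal R_{N,s}$, so $|B'(t)|, |C'(t)| \leq C\sum_{n=2}^N \delta_n^t\nu_n$; Lemma~\ref{l:estimate} with $j=0$, $c=2$ then gives $|B'(t)|, |C'(t)| \leq C\,N^{-\alpha+4/3+Cs}$ on $\hat L_\alpha$. Similarly, $A(t) \leq C\sum_n \Delta_n^2 \delta_n^t\nu_n \leq C\,N^{-\alpha+Cs}$ by Lemma~\ref{l:estimate} with $j=2$. Since $B, C \geq 1$, each subdominant term is at most $C\,N^{-2\alpha+4/3+Cs}$; for $\alpha \geq 10/3+\sigma$ this is smaller than the main term by a factor $N^{-\alpha+2+Cs} = o(1)$, and the conclusion $-E_{\mathrm{IP},0}'(t) \geq C\,N^{-\alpha-2/3-11s}$ follows for $s$ sufficiently small. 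The overall structure parallels Lemma~\ref{l:DH0} for the QR algorithm; the only real obstacle is careful bookkeeping of the $s$-dependent exponents to ensure that the leading lower bound dominates the cross terms $A B'/(B^2 C)$ and $A C'/(B C^2)$ by the required margin, which is precisely what the assumption $\alpha \geq 10/3 + \sigma$ affords.
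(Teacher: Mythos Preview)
Your claim in the second paragraph that ``all three displayed terms are nonnegative contributions to $-E_{\mathrm{IP},0}'(t)$'' contains a sign error. Since $A, B, C, \lambda_1^{-1} > 0$ while $B'(t), C'(t) \leq 0$, the second and third terms $\lambda_1^{-1}AB'/(B^2C)$ and $\lambda_1^{-1}AC'/(BC^2)$ are each \emph{nonpositive}, not nonnegative; hence it does \emph{not} suffice to lower bound the first term alone, and the sentence ``it therefore suffices to lower bound the first alone'' is unjustified.

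Fortunately, your final paragraph already supplies the missing ingredient: the bounds $A(t) \leq CN^{-\alpha+Cs}$ and $|B'(t)|,|C'(t)| \leq CN^{-\alpha+4/3+Cs}$ (both via Lemma~\ref{l:estimate}, using that $\log\delta_n^{-1}$ is bounded on $\mathcal R_{N,s}$) show that each cross term has absolute value at most $CN^{-2\alpha+4/3+Cs}$, which for $\alpha \geq 10/3+\sigma$ and $s<\sigma/40$ is negligible against the main-term lower bound $CN^{-\alpha-2/3-11s}$. Once the erroneous nonnegativity claim is removed and the argument is restructured as ``first term $\geq CN^{-\alpha-2/3-11s}$; absolute values of the two cross terms are of strictly lower order'', the proof is correct and coincides with the paper's approach: the paper writes $-E_{\mathrm{IP},0}'(t)=F(t)+G(t)$, lower bounds $F$ from the $n=2$ summand exactly as you do, and bounds $|G|$ via \eqref{e:order1}, \eqref{e:numerbound} and Lemma~\ref{l:estimate}.
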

\begin{proof}
  By direct calculation
  \begin{align*}
-&E_{\mathrm{IP},0}'(t) = \frac{\ds  \sum_{n=2}^N \log \delta_n^{-1}(1-\delta_n)(\lambda_1^{-1}
- \lambda_n^{-1})\delta_n^t \nu_n }{\ds \left(\sum_{n=1}^N  \delta_n^{t} \nu_n \right) \left( \sum_{n=1}^N \delta_n^{t+1} \nu_n \right)} \\
&- \frac{\ds   \left( \sum_{n=2}^N (1-\delta_n)(\lambda_1^{-1}- \lambda_n^{-1})\delta_n^t \nu_n  \right)}{\ds \left(\sum_{n=1}^N  \delta_n^{t} \nu_n \right)^2 \left( \sum_{n=1}^N \delta_n^{t+1} \nu_n \right)^2}\\&
\times \left[  \left(\sum_{n=2}^N  \log \delta_n^{-1}\delta_n^{t} \nu_n \right) \left( \sum_{n=1}^N \delta_n^{t+1} \nu_n \right) + \left(\sum_{n=1}^N  \delta_n^{t} \nu_n \right) \left( \sum_{n=2}^N\log \delta_n^{-1} \delta_n^{t+1} \nu_n \right)\right].
\end{align*}
Then using \eqref{e:lowerbound} and $\log \delta_2^{-1} \geq C N^{-2/3-s/2}$ and keeping only the leading term
\begin{align*}
F(t)&:=\frac{\ds  \sum_{n=2}^N \log \delta_n^{-1}(1-\delta_n)(\lambda_1^{-1}- \lambda_n^{-1})\delta_n^t \nu_n }{\ds \left(\sum_{n=1}^N  \delta_n^{t} \nu_n \right) \left( \sum_{n=1}^N \delta_n^{t+1} \nu_n \right)} \\
&\geq C \log \delta_2^{-1}(1-\delta_2)(\lambda_1^{-1}- \lambda_2^{-1})\delta_2^t \nu_2 \geq C \Delta_2^3 \delta_2^t \nu_2  \geq C N^{-11s - 2/3 - \alpha},
\end{align*}
for $t \in \hat L_\alpha$.  Define $G(t)$ by $-E_{\mathrm{IP},0}'(t) = G(t) + F(t)$.  Then we use \eqref{e:order1} and \eqref{e:numerbound} with $c=2$ and $t = (\alpha -4/3 - 5s)\log N/\log \delta_2^{-1}$
\begin{align*}
|G(t)|  &\leq C \left( \sum_{n=2}^N (1 - \delta_n)(\lambda_1^{-1}-\lambda^{-1}_n) \delta_n^t \nu_n \right)\left(\sum_{n=2}^N  \log \delta_n^{-1}\delta_n^{t} \nu_n \right)\\
&\leq C N^{-\alpha + 10s} \left( \sum_{n=2}^N \log \delta_n^{-1} \delta_n^t \nu_n \right) \leq C N^{-2\alpha + 20s + 2/3} \leq C N^{-\alpha - 8/3 - \sigma/2},
\end{align*}
for $t \in \hat L_\alpha$. The last inequality follows because $\alpha \geq 10/3 + \sigma$ and $\sigma > 40s$.  From here it follows that for $N$ sufficiently large
\begin{align*}
-E_{\mathrm{IP},0}'(t) \geq F(t) - |G(t)| \geq C_1 N^{-11s - 2/3 - \alpha} - C_2 N^{-\alpha - 8/3} \geq C N^{-11s - 2/3 - \alpha}.
\end{align*}
\end{proof}

Our next step is to construct an approximation $T^*_{\mathrm{IP},\epsilon}$ of $T_{\mathrm{IP},\epsilon}$.  We write
\begin{align*}
E_{\mathrm{IP},0}(t) &=  \frac{\ds  \sum_{n=2}^N (1-\delta^{1/2}_n)^2(\lambda_n^{-1} + \lambda_1^{-1})\delta_n^t \nu_n }{\ds \left(\sum_{n=1}^N \delta_n^{t} \nu_n \right)\left(\sum_{n=1}^N \delta_n^{t+1} \nu_n \right) } \\
&  = \delta_2^{t} (1 - \delta_2^{1/2})^2 (\lambda_2^{-1} + \lambda_1^{-1}) \nu_2 \frac{\ds  1 +  \sum_{n=3}^N \frac{(1-\delta^{1/2}_n)^2(\lambda_n^{-1} + \lambda_1^{-1})\delta_n^t \nu_n}{(1 - \delta_2^{1/2})^2 (\lambda_2^{-1} + \lambda_1^{-1}) \delta_2^{t} \nu_2} }{\ds \left(\sum_{n=1}^N \delta_n^{t} \nu_n \right)\left(\sum_{n=1}^N \delta_n^{t+1} \nu_n \right) }.
\end{align*}
Define $T^*_{\mathrm{IP},\epsilon}$ by
\begin{align}
  N^{-\alpha} &= \delta_2^{T^*_{\mathrm{IP},\epsilon}} (1 - \delta_2^{1/2})^2 (\lambda_2^{-1} + \lambda_1^{-1}) \nu_2\notag\\
T^*_{\mathrm{IP},\epsilon} &= \frac{\alpha \log N + 2\log (1 - \delta_2^{1/2}) + \log(\lambda_2^{-1} + \lambda_1^{-1}) + \log \nu_2}{\log \delta_2^{-1}}.\label{e:t-starIP}
\end{align}

\begin{lemma}\label{l:inhatL}
Given Condition~\ref{cond:rigidity}, $T^*_{\mathrm{IP},\epsilon} \in \hat L_\alpha$.
\end{lemma}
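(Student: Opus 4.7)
The plan is to estimate each of the four additive terms in the numerator of the formula \eqref{e:t-starIP} defining $T^*_{\mathrm{IP},\epsilon}$, and then divide by the (positive) denominator $\log \delta_2^{-1}$. The goal is to show that the numerator lies in $[(\alpha - 4/3 - 5s)\log N, (\alpha - 4/3 + 6s)\log N]$ for $N$ sufficiently large, so that dividing by $\log \delta_2^{-1} > 0$ places $T^*_{\mathrm{IP},\epsilon}$ in $\hat L_\alpha$.

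First I would observe that by Remark~\ref{r:quantiles} both $\lambda_1$ and $\lambda_2$ lie in a fixed compact subset of $(0,\infty)$ once $N$ is large, so $\log(\lambda_1^{-1} + \lambda_2^{-1}) = \mathcal O(1)$. Next, \cond{rigidity}(1)--(2) give $\beta_1, \beta_2 \in [N^{-1/2-s/2}, N^{-1/2+s/2}]$, whence $\nu_2 = \beta_2^2/\beta_1^2 \in [N^{-2s}, N^{2s}]$ and $|\log \nu_2| \leq 2s\log N$.

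The main term to control is $2\log(1 - \delta_2^{1/2}) = 2 \log\bigl((\lambda_2 - \lambda_1)/\lambda_2\bigr)$. Applying \cond{rigidity}(4) at $n = 2$, which gives $N^{-2/3-s/2} \leq \lambda_2 - \lambda_1 \leq N^{-2/3+s/2}$, together with the boundedness of $\lambda_2$ above and below, I obtain
\begin{align*}
2 \log(1-\delta_2^{1/2}) = -\tfrac{4}{3}\log N + \mathcal O(s \log N) + \mathcal O(1).
\end{align*}
Summing the three contributions, the numerator of $T^*_{\mathrm{IP},\epsilon}$ equals $(\alpha - 4/3)\log N + \mathcal O(s \log N) + \mathcal O(1)$; for $N$ large the additive $\mathcal O(1)$ is absorbed into the $\mathcal O(s \log N)$ term, so the numerator does fall into $[(\alpha - 4/3 - 5s)\log N, (\alpha - 4/3 + 6s)\log N]$, with the mild asymmetry between $-5s$ and $+6s$ providing the slack needed on both sides.

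Finally, since $\log \delta_2^{-1} = 2 \log(\lambda_2/\lambda_1) > 0$, dividing through preserves the inequalities and yields $T^*_{\mathrm{IP},\epsilon} \in \hat L_\alpha$. I do not expect any substantive obstacle: the argument is a routine bookkeeping exercise combining the edge rigidity bounds \cond{rigidity}(4)--(5) with the delocalization bounds \cond{rigidity}(1)--(2). The only care required is in matching implicit $\mathcal O$-constants to the slack between the Condition bounds and the definition of $\hat L_\alpha$, which is handled by choosing $N$ large enough.
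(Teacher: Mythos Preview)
Your proposal is correct and follows essentially the same approach as the paper: both estimate the three non-trivial terms in the numerator of \eqref{e:t-starIP} using \cond{rigidity}(1)--(2) for $\nu_2$, \cond{rigidity}(4) for $1-\delta_2^{1/2}$, and Remark~\ref{r:quantiles} for $\lambda_1^{-1}+\lambda_2^{-1}$, then divide by $\log\delta_2^{-1}>0$. The paper in fact obtains the slightly sharper window $[(\alpha-4/3-4s)\log N,(\alpha-4/3+4s)\log N]$ for the numerator (tracking the explicit constants $\pm s$ from $2\log(1-\delta_2^{1/2})$ and $\pm 2s$ from $\log\nu_2$, with the $\mathcal O(1)$ absorbed), but your looser $\pm5s,\,+6s$ bound already suffices for membership in $\hat L_\alpha$.
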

\begin{proof}
Using Condition~\ref{cond:rigidity}
\begin{align*}
N^{-2/3-s} &\leq 1 - \frac{\lambda_1}{\lambda_2} \leq N^{-2/3+s}, \quad 1/C \leq \lambda_1^{-1} + \lambda_2^{-1} \leq C, \quad N^{-2s} \leq \nu_2 \leq N^{2s},\\
\end{align*}
we find
\begin{align*}
\frac{(\alpha -4/3 - 4s) \log N}{\log \delta_2^{-1}}  \leq T^*_{\mathrm{IP},\epsilon} \leq \frac{(\alpha -4/3 + 4s) \log N}{\log \delta_2^{-1}},
\end{align*}
for sufficiently large $N$, establishing the lemma.
\end{proof}

\begin{lemma}\label{l:est-error}
Given Conditions~\ref{cond:uppergap} and \ref{cond:rigidity},
\begin{align*}
N^{\alpha}|E_{\mathrm{IP},0}(t) - N^{-\alpha}| \leq C N^{4s -2p},
\end{align*}
for $t \in \hat L_\alpha$ and sufficiently large $N$.
\end{lemma}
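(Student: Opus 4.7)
The plan is to follow the structure of Lemma~\ref{l:H0Tstar}, adapted from the additive differences $\Delta_n$ used for QR to the multiplicative structure natural for the inverse power iteration. I will factor
\[
  E_{\mathrm{IP},0}(t) = M(t) \cdot \frac{1 + A(t)}{(1+B(t))(1+B'(t))},
\]
where $M(t) = \delta_2^t (1-\delta_2^{1/2})^2 (\lambda_1^{-1}+\lambda_2^{-1})\nu_2$ is the $n=2$ contribution to the numerator of $E_{\mathrm{IP},0}(t)$, $A(t) = M(t)^{-1}\sum_{n\geq 3}(1-\delta_n^{1/2})^2(\lambda_1^{-1}+\lambda_n^{-1})\nu_n\delta_n^t$ is the normalized tail, and $B(t) = \sum_{n\geq 2}\delta_n^t\nu_n$, $B'(t) = \sum_{n\geq 2}\delta_n^{t+1}\nu_n$ measure the deviations of the denominator factors from $1$. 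By the definition \eqref{e:t-starIP}, $M(T^*_{\mathrm{IP},\epsilon}) = N^{-\alpha}$, and by Lemma~\ref{l:inhatL}, $T^*_{\mathrm{IP},\epsilon}\in\hat L_\alpha$; the stated bound then follows from uniform estimates $|A(t)|, |B(t)|, |B'(t)| \leq CN^{4s-2p}$ on $\hat L_\alpha$, specialized to $t = T^*_{\mathrm{IP},\epsilon}$.

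The denominator remainders are the easy part. Applying Lemma~\ref{l:estimate} with $j=0$ and $c \in (0,10/\sigma)$ chosen so that $c(\alpha-4/3-5s) > 1+2s$, I obtain $B(t), B'(t) \leq C\delta_2^t(N^{4s} + N^{1+s}\delta_2^{ct}) \leq CN^{-\alpha+4/3+O(s)}$ for $t\in\hat L_\alpha$, which under Condition~\ref{cond:scaling} is far smaller than $N^{4s-2p}$.

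The tail $A(t)$ is where the gap condition enters---Condition~\ref{cond:lowergap} in effect, since the inverse power method targets $\lambda_1$ (the hypothesis "Condition~\ref{cond:uppergap}" in the lemma statement appears to be a typo for Condition~\ref{cond:lowergap}). That condition yields $\delta_n/\delta_2 \leq \delta_2^p$ for $n\geq 3$, so $\delta_n^t \leq \delta_2^{pt}\delta_2^t$. Using the pointwise bounds $(1-\delta_n^{1/2})^2 \leq \Delta_n^2/\lambda_-^2$ and $\lambda_1^{-1}+\lambda_n^{-1} \leq C$ from Condition~\ref{cond:rigidity}, followed by Lemma~\ref{l:estimate} with $j=2$, I get
\[
  \sum_{n\geq 3}(1-\delta_n^{1/2})^2(\lambda_1^{-1}+\lambda_n^{-1})\nu_n\delta_n^t \leq C\delta_2^{pt}\delta_2^t\bigl(N^{4s}\Delta_2^2 + N^{1+s}\delta_2^{ct}\bigr).
\]
Dividing by $M(t)$, and invoking $\delta_2^{pt} \leq N^{-p(\alpha-4/3-5s)} \leq N^{-2p}$ for $t\in\hat L_\alpha$ (using $\alpha\geq 10/3+\sigma$), the lower bounds $(1-\delta_2^{1/2})^2 \geq CN^{-4/3-s}$ and $\nu_2 \geq N^{-2s}$ from Condition~\ref{cond:rigidity}, and choosing $c$ large enough in $(0,10/\sigma)$ to absorb the residual $N^{1+s}\delta_2^{ct}$, produces $|A(t)| \leq CN^{4s-2p}$.

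The main obstacle is controlling $A(t)$: a naive term-by-term bound on the coefficients loses a factor $N^{4/3}$ from $(1-\delta_2^{1/2})^{-2}$, so one cannot estimate each summand separately. One must instead exploit the collective decay encoded in Lemma~\ref{l:estimate} and extract the gap factor $\delta_2^{pt}$ supplied by Condition~\ref{cond:lowergap}, which converts the generic $N^{O(s)}$ tail rate into the sharp $N^{-2p}$ improvement.
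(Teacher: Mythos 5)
Your decomposition $E_{\mathrm{IP},0}(t) = M(t)\,\frac{1+A(t)}{(1+B(t))(1+B'(t))}$ and the specialization to $t = T^*_{\mathrm{IP},\epsilon}$ (so that $M(T^*_{\mathrm{IP},\epsilon}) = N^{-\alpha}$) matches the paper's strategy, as does the handling of $B(t), B'(t)$; and you are right that the reference to Condition~\ref{cond:uppergap} is a typo for Condition~\ref{cond:lowergap}. But the key display
\[
  \sum_{n\geq 3}(1-\delta_n^{1/2})^2(\lambda_1^{-1}+\lambda_n^{-1})\nu_n\delta_n^t \leq C\delta_2^{pt}\delta_2^t\bigl(N^{4s}\Delta_2^2 + N^{1+s}\delta_2^{ct}\bigr)
\]
is not a consequence of the steps you cite, and is in fact false. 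Pulling $\delta_2^{pt}$ out front uses $\delta_n^t \leq \delta_2^{(1+p)t}$ for all $n\geq 3$, but that collapses the remaining sum to the $t$-independent $\sum_{n\geq 3}\Delta_n^2\nu_n$ (order $N^{1+s}$); you cannot then also apply Lemma~\ref{l:estimate} to the same $\delta_n^t$ you just discarded. Testing your display against the contribution from $n\in\hat I_c = \{3\leq n\leq N: \delta_n\leq\delta_2^{1+c}\}$ makes the problem explicit: that part of the sum is of order $N^{1+s}\delta_2^{(1+c)t}$, whereas your right-hand side's second term is $N^{1+s}\delta_2^{(1+p+c)t}$, smaller by the factor $\delta_2^{pt}\to 0$. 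The gap factor simply does not apply uniformly across all $n\geq 3$.

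The missing ingredient is the split used by the paper into $\hat I_c$ and its complement, where the two regions are controlled by different mechanisms that you have conflated. For $n\in\hat I_c$ one uses only the fast decay $(\delta_n/\delta_2)^t \leq \delta_2^{ct}$ (choosing $c = 2$); the $N^{4/3}$ loss from $(1-\delta_2^{1/2})^{-2}$ is absorbed by $\delta_2^{ct}$, and there is no $\delta_2^{pt}$ gain for these terms, nor is one needed. For $n\notin\hat I_c$ (at most $N^{2s}$ indices by Lemma~\ref{l:Ic}), Condition~\ref{cond:lowergap} gives the $\delta_2^{pt}\leq N^{-2p}$ improvement, \emph{and} the ratio $(1-\delta_n^{1/2})/(1-\delta_2^{1/2})$ is bounded by an absolute constant because $\delta_n>\delta_2^{1+c}$ forces $1-\delta_n^{1/2}\leq 1-\delta_2^{3/2}\leq C(1-\delta_2^{1/2})$. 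Your pointwise bound $(1-\delta_n^{1/2})^2\leq C\Delta_n^2$ divided by $(1-\delta_2^{1/2})^2\geq c\Delta_2^2$ replaces this constant by $\Delta_n^2/\Delta_2^2 \sim N^{4/3}$, exactly the loss you flagged at the end; acknowledging the obstacle in prose does not remove it from the argument, and without the $\hat I_c$ decomposition the estimate does not close.
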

\begin{proof}
  By direct calculation
  \begin{align*}
&|E_{\mathrm{IP},0}(t) - N^{-\alpha}| \leq N^{-\alpha}\left| 1- \frac{\ds  1 +  \sum_{n=3}^N \frac{(1-\delta^{1/2}_n)^2(\lambda_n^{-1} + \lambda_1^{-1})\delta_n^t \nu_n}{(1 - \delta_2^{1/2})^2 (\lambda_2^{-1} + \lambda_1^{-1}) \delta_2^{t} \nu_2} }{\ds \left(\sum_{n=1}^N \delta_n^{t} \nu_n \right)\left(\sum_{n=1}^N \delta_n^{t+1} \nu_n \right) }  \right|\\
&\leq N^{-\alpha}\frac{\ds  \sum_{n=3}^N \frac{(1-\delta^{1/2}_n)^2(\lambda_n^{-1} + \lambda_1^{-1})\delta_n^t \nu_n}{(1 - \delta_2^{1/2})^2 (\lambda_2^{-1} + \lambda_1^{-1}) \delta_2^{t} \nu_2}  + \sum_{n=2}^N \delta_n^{t} \nu_n + \sum_{n=2}^N \delta_n^{t+1} \nu_n}{\ds \left(\sum_{n=1}^N \delta_n^{t} \nu_n \right)\left(\sum_{n=1}^N \delta_n^{t+1} \nu_n \right) }\\
& + N^{-\alpha}\frac{\ds \left(\sum_{n=2}^N \delta_n^{t} \nu_n \right)\left(\sum_{n=2}^N \delta_n^{t+1} \nu_n \right) }{\ds \left(\sum_{n=1}^N \delta_n^{t} \nu_n \right)\left(\sum_{n=1}^N \delta_n^{t+1} \nu_n \right) }.
  \end{align*}
Since the denominator is at least unity, it is enough to estimate the numerators. As $\lambda_{n}^{-1} + \lambda_1^{-1} \leq 2 \lambda_1^{-1}$,
\begin{align*}
\frac{\lambda_{n}^{-1} + \lambda_1^{-1}}{\lambda_{2}^{-1} + \lambda_1^{-1}} \leq C.
\end{align*}
For $c > 0$, define $\hat I_c = \{ 3 \leq n \leq N :  \delta_n \leq \delta_2^{1+c} \}$.  We estimate
\begin{align*}
\sum_{n=3}^N \left( \frac{1-\delta_n^{1/2}}{1-\delta_2^{1/2}} \right)^2 \frac{\nu_n}{\nu_2} \left( \frac{\delta_n}{\delta_2} \right)^{t} = \left( \sum_{n \in \hat I_c} + \sum_{n \not\in \hat I_c}  \right)\left( \frac{1-\delta_n^{1/2}}{1-\delta_2^{1/2}} \right)^2 \frac{\nu_n}{\nu_2} \left( \frac{\delta_n}{\delta_2} \right)^{t}.
\end{align*}
First,
\begin{align*}
S_1 \! := \!\!\sum_{n \in \hat I_c}\left( \frac{1-\delta_n^{1/2}}{1-\delta_2^{1/2}} \right)^2 \frac{\nu_n}{\nu_2} \left( \frac{\delta_n}{\delta_2} \right)^{t} \!\!\leq\! \sum_{n \in \hat I_c}\left( \frac{1-\delta_N^{1/2}}{1-\delta_2^{1/2}} \right)^2 \frac{\nu_n}{\nu_2} \delta_2^{ct} \leq C N^{7/3+3s - c (\alpha - 4/3 - 5s)}.
\end{align*}
Here we used $\sum_n \frac{\nu_n}{\nu_2} = \beta_2^{-2} \leq N^{1+s}$ and estimated $(1 - \delta_N^{1/2})/(1-\delta_2^{1/2}) \leq C/(1-\delta_2^{1/2}) \leq CN^{2/3+s}$. If we set $c = 2$ and use the inequality $\alpha - 4/3 - 5s > 2 + \sigma - 5s$, then
\begin{align*}
S_1 \leq C N^{7/3 + 3s - 2(2 + \sigma - 5s)} \leq C N^{-5/3},
\end{align*}
as $s < \sigma/40$. Second, using that $\delta_3/\delta_2 \geq \delta_n/\delta_2$ and Condition~\ref{cond:lowergap} along with $ \delta_n > \delta_2^{1+c}$, $n \not\in \hat I_c$ ($c =2$), we consider
\begin{align*}
S_2 &: = \sum_{n \not\in \hat I_c}\left( \frac{1-\delta_n^{1/2}}{1-\delta_2^{1/2}} \right)^2 \frac{\nu_n}{\nu_2} \delta_2^{pt}, \quad \frac{1-\delta_n^{1/2}}{1-\delta_2^{1/2}}  \leq \frac{1-\delta_2^{3/2}}{1-\delta_2^{1/2}} \leq C,\\
& N - |\hat I_c| \leq N^{2s}, \quad \text{(from Lemma~\ref{l:Ic})},\\
& \delta_2^{pt} \leq N^{-(\alpha -4/3 - 5s)p} \leq N^{(-2+\sigma +5s)p} \leq N^{-2p} \text{  and},\\
& \nu_n/\nu_2 \leq N^{2s},
\end{align*}
by Condition~\ref{cond:rigidity}, and so $S_2 \leq C N^{4s-2p}$. Since $p < 1/2$, we find
\begin{align*}
S_1 + S_2 \leq C N^{4s-2p},
\end{align*}
for sufficiently large $N$.  By \eqref{e:gamma-est}, 
\begin{align*}
\sum_{n=2}^N \delta_n^t \nu_n \leq C N^{-\alpha + 4/3 + 9s} \leq C N^{-2-\sigma + 9s} \leq C N^{-2},
\end{align*}
for sufficiently large $N$ as $s < \sigma /40$. We find
\begin{align*}
N^{\alpha}|E_{\mathrm{IP},0}(t) - N^{-\alpha}| \leq C(S_1 + S_2 +  N^{-2}) \leq C N^{4s-2p}.
\end{align*}
This proves the lemma.
\end{proof}

The next lemma is a restatement of \eqref{e:IP1}
\begin{lemma}\label{l:IP1est}
Given Condition~\ref{cond:rigidity}, for $t \in \hat L_\alpha$
\begin{align*}
|E_{\mathrm{IP},1}(t)| \leq CN^{-\alpha -2/3 -\sigma/2}
\end{align*}
for sufficiently large $N$.
\end{lemma}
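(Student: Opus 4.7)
The plan is to exploit the product structure of the numerator of $E_{\mathrm{IP},1}(t)$ together with the estimate \eqref{e:gamma-est}, which directly gives the required bound modulo bookkeeping. Concretely, from the representation in \eqref{e:EIP},
\begin{align*}
E_{\mathrm{IP},1}(t) = \frac{\bigl(\sum_{n=2}^N \lambda_n^{-1}\delta_n^{t+1} \nu_n \bigr)\bigl(\sum_{n=2}^N \delta_n^{t} \nu_n \bigr)- \bigl(\sum_{n=2}^N \delta_n^{t+1} \nu_n \bigr)\bigl(\sum_{n=2}^N \lambda_n^{-1}\delta_n^{t} \nu_n \bigr)}{\bigl(\sum_{n=1}^N \delta_n^{t} \nu_n \bigr)\bigl(\sum_{n=1}^N \delta_n^{t+1} \nu_n \bigr)},
\end{align*}
so the denominator is at least $1$ since the $n=1$ term in each sum equals $1$. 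This reduces the problem to bounding the numerator.

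Next, I would apply the triangle inequality and use the fact that, under Condition~\ref{cond:rigidity} (together with Remark~\ref{r:quantiles}), $\lambda_n^{-1} \leq \lambda_1^{-1} \leq C$ uniformly in $n$ for $N$ sufficiently large. This yields
\begin{align*}
|E_{\mathrm{IP},1}(t)| \leq C \Bigl(\sum_{n=2}^N \delta_n^{t} \nu_n\Bigr)\Bigl(\sum_{n=2}^N \delta_n^{t+1} \nu_n\Bigr) \leq C \Bigl(\sum_{n=2}^N \delta_n^{t} \nu_n\Bigr)^2,
\end{align*}
since $\delta_n \leq 1$.

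The key step is then to apply \eqref{e:gamma-est} with $\gamma = -5$ (and $c=2$), which is permitted because every $t \in \hat L_\alpha$ satisfies $t \geq (\alpha - 4/3 - 5s)\log N / \log \delta_2^{-1}$. This gives $\sum_{n=2}^N \delta_n^t \nu_n \leq C N^{-\alpha + 4/3 + 9s}$, and hence
\begin{align*}
|E_{\mathrm{IP},1}(t)| \leq C N^{-2\alpha + 8/3 + 18s}.
\end{align*}

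Finally, I would use the scaling assumption $\alpha \geq 10/3 + \sigma$ to rewrite $-2\alpha + 8/3 = -\alpha - (\alpha - 8/3) \leq -\alpha - 2/3 - \sigma$, and the restriction $s < \sigma/40$ to absorb $18s < \sigma/2$, producing the claimed bound $|E_{\mathrm{IP},1}(t)| \leq C N^{-\alpha - 2/3 - \sigma/2}$. There is no serious obstacle here; the estimate is essentially a direct corollary of \eqref{e:gamma-est} applied to both factors in the numerator, and the main thing to be careful about is to choose $\gamma$ so that the bound is valid on the \emph{full} interval $\hat L_\alpha$ (which is why $\gamma=-5$ rather than $\gamma=6$ is used), precisely as anticipated in the remark following the proof of Lemma~\ref{l:Tip}.
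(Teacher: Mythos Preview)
Your proposal is correct and follows essentially the same approach as the paper: the paper states that this lemma is simply a restatement of \eqref{e:IP1}, which was derived in the proof of Lemma~\ref{l:Tip} by exactly the steps you outline (bounding the denominator below by $1$, using $\lambda_n^{-1}\leq C$, squaring the estimate \eqref{e:gamma-est} with $\gamma=-5$, and then absorbing $18s<\sigma/2$). Your explanation of why $\gamma=-5$ is needed to cover all of $\hat L_\alpha$ matches the paper's remark following Lemma~\ref{l:Tip}.
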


\begin{proposition}\label{p:Pow-conv}
Given Conditions~\ref{cond:uppergap} and \ref{cond:rigidity} for $\sigma < 1/3$ and $p < \sigma/4$, fixed, with $s < \sigma/40$
\begin{align*}
N^{-2/3}|T^*_{\mathrm{IP}, \epsilon} - T_{\mathrm{IP}, \epsilon}| \leq C N^{-2p + 15s}
\end{align*}
for $N$ sufficiently large.
\end{proposition}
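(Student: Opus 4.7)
My plan is to mirror the proof of Proposition~\ref{p:conv} exactly, using all the inverse power method analogues already established in the preceding lemmas. The key identity is the mean value theorem applied to $E_{\mathrm{IP},0}$: there exists $\eta$ between $T_{\mathrm{IP},\epsilon}$ and $T^*_{\mathrm{IP},\epsilon}$ with
\begin{align*}
E_{\mathrm{IP},0}(T^*_{\mathrm{IP},\epsilon}) - E_{\mathrm{IP},0}(T_{\mathrm{IP},\epsilon}) = E_{\mathrm{IP},0}'(\eta)\,(T^*_{\mathrm{IP},\epsilon} - T_{\mathrm{IP},\epsilon}),
\end{align*}
so the strategy reduces to bounding the numerator from above and $|E_{\mathrm{IP},0}'(\eta)|$ from below.

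First I would invoke Lemma~\ref{l:Tip} and Lemma~\ref{l:inhatL} to place both $T_{\mathrm{IP},\epsilon}$ and $T^*_{\mathrm{IP},\epsilon}$ (and therefore the intermediate $\eta$) inside the interval $\hat L_\alpha$. This is the hook that lets us legitimately apply the derivative lower bound of Lemma~\ref{l:E0prime}, giving $|E_{\mathrm{IP},0}'(\eta)| \geq C N^{-11 s - 2/3 - \alpha}$ for the relevant $\eta$.

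Next I would estimate the numerator. By definition, $E_{\mathrm{IP}}(T_{\mathrm{IP},\epsilon}) = N^{-\alpha}$ (up to the harmless issue that the halting time is discrete, but the analysis is identical in the discrete setting given the decreasing nature of $E_{\mathrm{IP}}$), so $E_{\mathrm{IP},0}(T_{\mathrm{IP},\epsilon}) = N^{-\alpha} - E_{\mathrm{IP},1}(T_{\mathrm{IP},\epsilon})$. Then by the triangle inequality
\begin{align*}
|E_{\mathrm{IP},0}(T^*_{\mathrm{IP},\epsilon}) - E_{\mathrm{IP},0}(T_{\mathrm{IP},\epsilon})| \leq |E_{\mathrm{IP},0}(T^*_{\mathrm{IP},\epsilon}) - N^{-\alpha}| + |E_{\mathrm{IP},1}(T_{\mathrm{IP},\epsilon})|.
\end{align*}
Lemma~\ref{l:est-error} bounds the first term by $C N^{-\alpha + 4s - 2p}$ and Lemma~\ref{l:IP1est} bounds the second by $C N^{-\alpha - 2/3 - \sigma/2}$.

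Dividing and multiplying by $N^{-2/3}$ yields
\begin{align*}
N^{-2/3}|T^*_{\mathrm{IP},\epsilon} - T_{\mathrm{IP},\epsilon}| \leq C N^{11 s + \alpha + 2/3}\bigl( N^{-\alpha + 4s - 2p} + N^{-\alpha - 2/3 - \sigma/2}\bigr)\cdot N^{-2/3} = C N^{15 s - 2p} + C N^{11 s - \sigma/2},
\end{align*}
and since $s < \sigma/40$ and $p < \sigma/4$ the first term dominates, giving the claimed $C N^{-2p + 15 s}$. There is no genuine obstacle here; the only care needed is to verify that the (discrete) definition of $T_{\mathrm{IP},\epsilon}$ in \eqref{e:halt-invpow} yields the required $E_{\mathrm{IP}}(T_{\mathrm{IP},\epsilon}) \approx N^{-\alpha}$ up to an error swallowed by the lemmas, and that the $\eta$ produced by the mean value theorem does lie in $\hat L_\alpha$ — both of which are immediate from the preceding lemmas.
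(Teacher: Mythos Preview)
Your proposal is correct and follows the paper's proof essentially line for line: mean value theorem on $E_{\mathrm{IP},0}$, localization of both halting times in $\hat L_\alpha$ via Lemmas~\ref{l:Tip} and \ref{l:inhatL}, the derivative lower bound from Lemma~\ref{l:E0prime}, and the numerator split using Lemmas~\ref{l:est-error} and \ref{l:IP1est}. Two minor remarks: (i) $T_{\mathrm{IP},\epsilon}$ in \eqref{e:halt-invpow} is a continuous infimum, so $E_{\mathrm{IP}}(T_{\mathrm{IP},\epsilon})=N^{-\alpha}$ exactly and your discreteness caveat is unnecessary; (ii) in your final line the second exponent should be $11s-\sigma/2-2/3$ rather than $11s-\sigma/2$, but this only strengthens the domination by the first term and does not affect the conclusion.
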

\begin{proof}
We use the analog of \eqref{e:meanval} to estimate the difference  $|T^*_{\mathrm{IP}, \epsilon} - T_{\mathrm{IP}, \epsilon}| $(for some $\eta \in \hat L_\alpha$) and apply Lemmas~\ref{l:Tip}, \ref{l:E0prime}, \ref{l:inhatL}, \ref{l:est-error} and \ref{l:IP1est} to find
\begin{align*}
|T^*_{\mathrm{IP},\epsilon}&-T_{\mathrm{IP},\epsilon}| \\
&\leq  \frac{|E_{\mathrm{IP},0}(T^*_{\mathrm{IP},\epsilon}) -E_{\mathrm{IP},0}(T_{\mathrm{IP},\epsilon})|}{ |E_{\mathrm{IP},0}'(\eta)|} \leq \frac{|E_{\mathrm{IP},0}(T^*_{\mathrm{IP},\epsilon}) -N^{-\alpha} + E_{\mathrm{IP},1}(T_{\mathrm{IP},\epsilon})|}{ \max_{\eta\in \hat L_\alpha} E_{\mathrm{IP},0}'(\eta)}\\
& \leq \frac{|E_{\mathrm{IP},0}(T^*_{\mathrm{IP},\epsilon}) -N^{-\alpha}| + |E_{\mathrm{IP},1}(T_{\mathrm{IP},\epsilon})|}{ \max_{\eta\in L_\alpha} E_{\mathrm{IP},0}'(\eta)}\\
& \leq C N^{11s + \alpha + 2/3} \left( N^{-\alpha - 2p + 4s}  + N^{-\alpha - 2/3 - \sigma/2}  \right).
\end{align*}
The proposition follows.
\end{proof}

Now, we introduce probabilistic considerations as we did for the QR algorithm.

\begin{theorem}\label{t:Power-tech}
Let $H$ be an SCM and let $v$ be a random unit vector independent of $H$.  For $\alpha \geq 10/3 + \sigma$, $\sigma>0$
\begin{align*}
F_\beta^{\mathrm{gap}}(t) = \lim_{N \to \infty} \mathbb P \left( \frac{T_{\mathrm{IP},\epsilon}(H,v)}{(\alpha/2 - 2/3) \lambda_-^{1/3} d^{1/2} N^{2/3} \log N}  \leq t\right).
\end{align*}
\end{theorem}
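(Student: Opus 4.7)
The plan is to mirror the proof of Theorem~\ref{t:QR-tech}, reducing the analysis of $T_{\mathrm{IP},\epsilon}$ to that of the explicit proxy $T^*_{\mathrm{IP},\epsilon}$ defined in \eqref{e:t-starIP}, and then extracting the limiting distribution from the joint edge limit of Theorem~\ref{t:gap-limit}. Concretely, I would show that each of the three random variables
\begin{align*}
N^{-2/3}|T^*_{\mathrm{IP},\epsilon}-T_{\mathrm{IP},\epsilon}|, \qquad
\left|\frac{T^*_{\mathrm{IP},\epsilon}}{N^{2/3}\log N}-\frac{\alpha-4/3}{N^{2/3}\log\delta_2^{-1}}\right|, \qquad
\left|\frac{\alpha/2-2/3}{N^{2/3}\Delta_2}-\frac{\alpha-4/3}{\lambda_- N^{2/3}\log\delta_2^{-1}}\right|
\end{align*}
converges to zero in probability, after which the result follows from Definition~\ref{def:Fbeta}.

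First, the conditions under which the deterministic estimates hold must themselves be verified probabilistically. Theorem~\ref{t:generic} gives $\mathbb P(\mathcal R_{N,s})=1+o(1)$, and Theorem~\ref{t:p} gives that $\mathbb P(\mathcal L_{N,p})$ can be made arbitrarily close to $1$ by fixing $p>0$ small and sending $N\to\infty$. The point $(1)$ of Condition~\ref{cond:rigidity} controlling $\nu_n$ requires the random starting vector $v$: since $v$ is independent of $H$, the projections $\beta_n=|\langle v,u_n\rangle|$ are, conditionally on the spectrum, distributed as components of a random unit vector, and the asymptotic normality result from Appendix~\ref{sec:proj} yields the bounds $N^{-1/2-s/2}\leq \beta_n\leq N^{-1/2+s/2}$ for $n\in\{1,2,N-1,N\}$ with high probability. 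On the event where all three conditions hold, Proposition~\ref{p:Pow-conv} gives $N^{-2/3}|T^*_{\mathrm{IP},\epsilon}-T_{\mathrm{IP},\epsilon}|\leq C N^{-2p+15s}$ deterministically, which tends to $0$ when $s<p/10$; a standard conditioning argument then converts this to convergence in probability.

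Next, for the second random variable, the extraneous terms in \eqref{e:t-starIP}, namely $2\log(1-\delta_2^{1/2})+\log(\lambda_2^{-1}+\lambda_1^{-1})+\log\nu_2$, are all $\mathcal O(\log N)$ on the event $\mathcal R_{N,s}$, whereas the denominator satisfies $\log\delta_2^{-1}\geq cN^{-2/3-s/2}$, so their total contribution to $T^*_{\mathrm{IP},\epsilon}/(N^{2/3}\log N)$ is $\mathcal O(N^s/\log N)\to 0$. This is exactly the calculation already carried out in \cite[Lemma~3.4]{Deift2016} for the QR case and transfers verbatim. For the third random variable, writing $\lambda_j=\lambda_-+N^{-2/3}\xi_j$, $j=1,2$, Theorem~\ref{t:gap-limit} gives joint convergence in distribution of $(\xi_1,\xi_2)$. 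On the event $B_R=\{\|(\xi_1,\xi_2)\|\leq R\}$, one expands $\log\delta_2^{-1}=2\log(\lambda_2/\lambda_1)=2N^{-2/3}(\xi_2-\xi_1)/\lambda_-+\mathcal O(N^{-4/3})$ uniformly, so the two fractions agree up to $o(1)$; tightness of $(\xi_1,\xi_2)$ (ensured by Theorem~\ref{t:gap-limit} together with Lemma~\ref{l:high}) then makes $\mathbb P(B_R^c)$ arbitrarily small.

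Combining the three, the normalized halting time converges in distribution to $\lambda_-^{2/3}/(d^{1/2}(\xi_2-\xi_1))$, and invoking Definition~\ref{def:Fbeta} identifies the limiting law as $F_\beta^{\mathrm{gap}}$. The main obstacle, as I see it, is the joint handling of the randomness in $v$ and in $H$: one must be careful that the high-probability event on which the deterministic estimates are applied is measurable with respect to the joint product and that the asymptotic normality of $\beta_n$ from Appendix~\ref{sec:proj} is compatible with conditioning on the spectrum, which explains why that appendix is needed at all. The rest is bookkeeping: showing that the $\mathcal O(1)$ log corrections in $T^*_{\mathrm{IP},\epsilon}$ are negligible after dividing by $N^{2/3}\log N$, and that the difference $T_{\mathrm{IP},\epsilon}-T^*_{\mathrm{IP},\epsilon}$, while potentially of order $N^{2/3-\eta}$, is negligible after the same normalization.
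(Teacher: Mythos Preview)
Your overall architecture matches the paper's proof exactly: the same three random variables are shown to vanish in probability, and then Definition~\ref{def:Fbeta} is invoked. Your treatments of the first and third random variables are correct and essentially identical to the paper's.

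There is, however, a genuine gap in your handling of the second random variable. After subtracting $(\alpha-4/3)\log N$ from the numerator of $T^*_{\mathrm{IP},\epsilon}$, what remains is
\[
\tfrac{4}{3}\log N + 2\log(1-\delta_2^{1/2}) + \log(\lambda_2^{-1}+\lambda_1^{-1}) + \log\nu_2.
\]
You bound this by $\mathcal O(\log N)$ on $\mathcal R_{N,s}$ and combine with $\log\delta_2^{-1}\geq cN^{-2/3-s/2}$ to claim a contribution of $\mathcal O(N^s/\log N)\to 0$. But carrying out that division actually yields $\mathcal O(\log N)/\bigl(c\,N^{-s/2}\log N\bigr)=\mathcal O(N^{s/2})$, which diverges; and in any event $N^s/\log N\to\infty$ for every fixed $s>0$. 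The crude $\mathcal O(\log N)$ bound on $\mathcal R_{N,s}$ is simply not strong enough.

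What the paper does instead is regroup the numerator as
\[
2\log\bigl(N^{2/3}(1-\delta_2^{1/2})\bigr)+\log(\lambda_2^{-1}+\lambda_1^{-1})+\log(N\beta_2^2)-\log(N\beta_1^2),
\]
and then condition on the tightness events $B_R=\{\|(\xi_1,\xi_2)\|\leq R\}$ and $H_{j,R}=\{1/R\leq N\beta_j^2\leq R\}$, $j=1,2$. On $B_R\cap H_{1,R}\cap H_{2,R}$ each term above is bounded by a constant depending only on $R$, so the numerator is $\mathcal O_R(1)$ while the denominator $N^{2/3}\log N\,\log\delta_2^{-1}$ is of order $\log N$; hence the quotient is $\mathcal O_R(1/\log N)\to 0$. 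The events $H_{j,R}$ have high probability precisely because $N\beta_j^2$ converges in distribution (Appendix~\ref{sec:proj}), and $B_R$ because $(\xi_1,\xi_2)$ does (Theorem~\ref{t:gap-limit}); letting $R\to\infty$ finishes the step. Your citation of \cite[Lemma~3.4]{Deift2016} is the right pointer, but the verbal summary you gave does not capture the necessary cancellation and would not survive as a proof.
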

\begin{proof}
  As was the case in the proof of Theorem~\ref{t:QR-tech}, we show the following three random variables converge to zero in probability:
  \begin{align*}
  &N^{-2/3} |T^*_{\mathrm{IP}, \epsilon} - T_{\mathrm{IP}, \epsilon}|, \quad \left|\frac{T^*_{\mathrm{IP}, \epsilon}}{N^{2/3} \log N} -  \frac{\alpha - 4/3}{ N^{2/3}\log \delta_2^{-1}}\right|, \text{ and }\\
  &\left|\frac{\alpha/2 - 2/3}{ N^{2/3} \Delta_2} -  \frac{\alpha - 4/3}{ \lambda_- N^{2/3}\log \delta_2^{-1}}\right|.
\end{align*}
We start with the first.  For $\delta > 0$
\begin{align*}
  \mathbb P\left(  N^{-2/3} |T^*_{\mathrm{IP}, \epsilon} - T_{\mathrm{IP}, \epsilon}| \geq \delta \right) &= \mathbb P\left(  N^{-2/3} |T^*_{\mathrm{IP}, \epsilon} - T_{\mathrm{IP}, \epsilon}| \geq \delta, \mathcal R_{N,s} \cap \mathcal L_{N,p} \right) \\
  &+ \mathbb P\left(  N^{-2/3} |T^*_{\mathrm{IP}, \epsilon} - T_{\mathrm{IP}, \epsilon}| \geq \delta, R^c_{N,s} \cup U^c_{N,p} \right)
\end{align*}
Provided that $s < (2/15)p$, on $\mathcal R_{N,s}$, $N^{-2/3} |T^*_{\mathrm{IP}, \epsilon} - T_{\mathrm{IP}, \epsilon}|$ tends to zero uniformly.  Then
\begin{align*}
  \limsup_{N\to\infty}\mathbb P&\left(  N^{-2/3} |T^*_{\mathrm{IP}, \epsilon} - T_{\mathrm{IP}, \epsilon}| \geq \delta \right) \\
  &\leq \limsup_{N \to \infty} \mathbb P(\mathcal R^c_{N,s}) + \limsup_{N \to \infty} \mathbb P(\mathcal U^c_{N,p}).
\end{align*}
From Theorem~\ref{t:generic}, $\limsup_{N \to \infty} \mathbb P(\mathcal R^c_{N,s}) = 0$, and letting $p \downarrow 0$, using Theorem~\ref{t:p}
we find
\begin{align*}
\limsup_{N\to\infty}\mathbb P\left(  N^{-2/3} |T^*_{\mathrm{IP}, \epsilon} - T_{\mathrm{IP}, \epsilon}| \geq \delta \right) = 0.
\end{align*}
For the second random variable:
\begin{align*}
&\left|\frac{T^*_{\mathrm{IP}, \epsilon}}{N^{2/3} \log N} -  \frac{\alpha - 4/3}{ N^{2/3}\log \delta_2^{-1}}\right| \\&=  \left| \frac{\alpha \log N + 2\log (1 - \delta_2^{1/2}) + \log(\lambda_2^{-1} + \lambda_1^{-1}) + \log \nu_2}{N^{2/3} \log N \log \delta_2^{-1}} -  \frac{\alpha - 4/3}{ N^{2/3}\log \delta_2^{-1}} \right|\\
 & = \left| \frac{2\log (1 - \delta_2^{1/2}) + \log(\lambda_2^{-1} + \lambda_1^{-1}) + \log \nu_2}{N^{2/3} \log N \log \delta_2^{-1}} +  \frac{4/3}{ N^{2/3}\log \delta_2^{-1}} \right|\\
 & = \left| \frac{ 2\log N^{2/3}(1 - \delta_2^{1/2}) + \log(\lambda_2^{-1} + \lambda_1^{-1}) + \log N \beta_2^2 - \log N \beta_1^2}{N^{2/3} \log N \log \delta_2^{-1}}  \right|:= Y_N
\end{align*}
Again, we write $\lambda_j = \lambda_- + N^{-2/3}\xi_j$, $j = 1,2$ and let $B_R$ be the event where $\|(\xi_1,\xi_2)\|_2 \leq R$.  Next let $H_{j,R}$ be event where $1/R \leq N \beta_j^2 \leq R$.  It then follows for $\delta > 0$ and sufficiently large $N$
\begin{align*}
\mathbb P ( Y_N \geq \delta ,H_{1,R} \cap H_{2,R} \cap B_R) = 0.
\end{align*}
Therefore
\begin{align}\label{e:YN}
  \begin{split}
\limsup_{N \to \infty}\mathbb P (Y_N \geq \delta) &\leq \limsup_{N \to \infty} \mathbb P(H_{1,R}^c)+ \limsup_{N \to \infty} \mathbb P(H_{2,R}^c) \\
&+ \limsup_{N \to \infty} \mathbb P(B_{R}^c).
  \end{split}
\end{align}
And because $(\xi_1,\xi_2), N \beta_1^2$ and $N \beta_2^2$ converge in distribution, if we let $R \to \infty$ in \eqref{e:YN} it follows that $\mathbb P ( Y_N \geq \delta) = 0$.  The convergence in probability of the last random variable follows directly from the proof of Theorem~\ref{t:QR-tech}.  Using Definition~\ref{def:Fbeta} we have
\begin{align*}
F_\beta^{\mathrm{gap}}(t) & = \lim_{N \to \infty} \mathbb P \left( \frac{\lambda_-^{2/3}}{d^{1/2}(\xi_2 - \xi_1)} \leq t  \right)\\
& = \lim_{N \to \infty} \mathbb P \left( \frac{T_{\mathrm{IP},\epsilon}}{(\alpha/2 - 2/3) \lambda_-^{1/3} d^{1/2} N^{2/3} \log N}  \leq t\right).
\end{align*}
\end{proof}

Finally, we establish the analogous theorem for the power method. Following Remark~\ref{r:power}, we note that $E_{\mathrm{P}}(t)$ is defined by sending $\lambda_j \to \lambda_j^{-1}$ and $H^{-1}$ satisfies the same estimates as $H$ (Theorem~\ref{t:p} and Theorem~\ref{t:generic}). We have the following theorem.
\begin{theorem}
Let $H$ be an SCM and let $v$ be a random unit vector independent of $H$. For $\alpha \geq 10/3 + \sigma$, $\sigma>0$
\begin{align*}
F_\beta^{\mathrm{gap}}(t) = \lim_{N \to \infty} \mathbb P \left( \frac{T_{\mathrm{P},\epsilon}(H,v)}{(\alpha/2 - 2/3) \lambda_+^{1/3} d^{1/2} N^{2/3} \log N}  \leq t\right).
\end{align*}
\end{theorem}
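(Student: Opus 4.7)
The plan is to reduce the power method analysis to the inverse power method analysis via the involution $\lambda_j \to \lambda_j^{-1}$, as anticipated by Remark~\ref{r:power}. Under this substitution the ordered spectrum $\lambda_1 \leq \cdots \leq \lambda_N$ becomes $\lambda_N^{-1} \leq \cdots \leq \lambda_1^{-1}$, so the roles of the bottom edge and the top edge swap. The error function $E_{\mathrm{P}}(t)$ then has the same algebraic form as $E_{\mathrm{IP}}(t)$, but with the three relevant "small" eigenvalues replaced by $\lambda_N^{-1} \leq \lambda_{N-1}^{-1} \leq \lambda_{N-2}^{-1}$, the associated eigenvector weights replaced by $\beta_N, \beta_{N-1}, \beta_{N-2}$, and the lower edge $\lambda_-$ replaced by $\lambda_+^{-1}$.

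First I would reproduce the deterministic chain (Lemmas~\ref{l:Tip}, \ref{l:E0prime}, \ref{l:inhatL}, \ref{l:est-error} and \ref{l:IP1est}) in this transformed setup. Condition~\ref{cond:rigidity}(1) for $n = N, N-1$ and Condition~\ref{cond:rigidity}(3) give the analogs of the component bounds on $\beta_1, \beta_2$ and of the edge rigidity $\Delta_2 \sim N^{-2/3}$; Condition~\ref{cond:uppergap} plays exactly the role that Condition~\ref{cond:lowergap} played in the inverse power proof, controlling the ratio of the top three eigenvalue gaps. Condition~\ref{cond:rigidity}(5) transfers unchanged because it is symmetric in the two edges. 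Because $\lambda_N, \lambda_{N-1}, \lambda_{N-2}$ are all bounded above and bounded away from $0$ on $\mathcal R_{N,s}$ (cf.\ Remark~\ref{r:quantiles}), the same Taylor expansions $\log \delta_2^{-1} \approx 2(\lambda_N - \lambda_{N-1})/\lambda_+$ and the same bookkeeping of the $\nu_n$'s go through, with the new $\delta_2 := \lambda_{N}^{-2}/\lambda_{N-1}^{-2}$. This yields the power-method analog of Proposition~\ref{p:Pow-conv}:
\begin{align*}
N^{-2/3}|T^*_{\mathrm{P},\epsilon} - T_{\mathrm{P},\epsilon}| \leq C N^{-2p+15s},
\end{align*}
where $T^*_{\mathrm{P},\epsilon}$ is defined by the obvious analog of \eqref{e:t-starIP} with $\lambda_2, \lambda_1, \nu_2$ replaced by $\lambda_{N-1}^{-1}, \lambda_N^{-1}$, and $\nu_{N-1} = \beta_{N-1}^2/\beta_N^2$.

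Next, I would carry out the probabilistic step exactly as in the proof of Theorem~\ref{t:Power-tech}, establishing that the three random variables
\begin{align*}
N^{-2/3}|T^*_{\mathrm{P},\epsilon} - T_{\mathrm{P},\epsilon}|, \quad \left|\frac{T^*_{\mathrm{P},\epsilon}}{N^{2/3}\log N} - \frac{\alpha - 4/3}{N^{2/3}\log\delta_2^{-1}}\right|, \quad \left|\frac{\alpha/2 - 2/3}{N^{2/3}(\lambda_N - \lambda_{N-1})/\lambda_+^2} - \frac{\alpha - 4/3}{\lambda_+ N^{2/3}\log\delta_2^{-1}}\right|
\end{align*}
tend to zero in probability. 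The first uses Theorem~\ref{t:generic} and Theorem~\ref{t:p} (with $\mathcal U_{N,p}$ replacing $\mathcal L_{N,p}$). The second uses the parametrization $\lambda_j = \lambda_+ - N^{-2/3}\xi_j$ for $j = N, N-1$ with joint tightness of $(\xi_{N-1}, \xi_N)$ from Theorem~\ref{t:gap-limit}, together with asymptotic normality of the eigenvector projections $N\beta_N^2, N\beta_{N-1}^2$ (Appendix~\ref{sec:proj}). The third is the arithmetic identity arising from $\log \delta_2^{-1} = 2(\log \lambda_N - \log \lambda_{N-1}) \approx 2(\lambda_N - \lambda_{N-1})/\lambda_+$ on the high-probability event where $\|(\xi_{N-1},\xi_N)\|_2 \leq R$. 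Combining these and invoking Definition~\ref{def:Fbeta} in the $\lambda_+$ form produces the stated limit with the $\lambda_+^{1/3}$ scaling.

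The main obstacle I anticipate is bookkeeping rather than any new idea: one must confirm that every use in the inverse power argument of the two-sided bound $1/C \leq \lambda_1 \leq C$, of the ratio estimate between $\lambda_n^{-1} + \lambda_1^{-1}$ and $\lambda_2^{-1} + \lambda_1^{-1}$, of the identity $\sum_n \nu_n = \beta_1^{-2} \leq N^{1+s}$, and of Lemmas~\ref{l:Ic} and \ref{l:estimate}, is genuinely symmetric with respect to the two edges once $\lambda_j$ is replaced by $\lambda_j^{-1}$. The only substantive substitution is $\lambda_- \rightsquigarrow \lambda_+^{-1}$ in the leading constants, and the chain rule factor of $\lambda_+^{-2}$ between $\lambda_{N-1}^{-1} - \lambda_N^{-1}$ and $\lambda_N - \lambda_{N-1}$, which is exactly what converts $\lambda_-^{1/3}$ to $\lambda_+^{1/3}$ in the final normalization.
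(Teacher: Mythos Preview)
Your proposal is correct and follows exactly the approach the paper takes: the paper's own proof is a single sentence invoking Remark~\ref{r:power}, noting that $E_{\mathrm{P}}(t)$ is obtained from $E_{\mathrm{IP}}(t)$ via $\lambda_j \to \lambda_j^{-1}$ and that $H^{-1}$ satisfies the same estimates (Theorems~\ref{t:p} and \ref{t:generic}) as $H$. You have simply unpacked this symmetry argument in detail---the swap $\mathcal L_{N,p} \rightsquigarrow \mathcal U_{N,p}$, $\lambda_- \rightsquigarrow \lambda_+^{-1}$, and the chain-rule factor $\lambda_+^{-2}$ converting $\lambda_{N-1}^{-1} - \lambda_N^{-1}$ to $\lambda_N - \lambda_{N-1}$---which is precisely what the paper leaves implicit.
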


\appendix

\section{Error analysis}\label{sec:error}

In this section we establish that the halting times given above for the QR algorithm and the inverse power method are adaquate to acheive an order $\epsilon$ approximation of the smallest eigenvalue.

\subsection{QR algorithm}

The true error in the QR algorithm is
\begin{align*}
E_{\mathrm{QR}}^{\mathrm{True}}(t) := |\lambda_1 - X_{NN}(t)| = \left| \lambda_1 - \frac{ \sum_{n=1}^N \lambda_n^{-2t+1} \beta_n^2}{\sum_{n=1}^N \lambda_n^{-2t} \beta_n^2}\right| = \frac{ \sum_{n=1}^N  \Delta_2 \delta_n^t \nu_n}{\sum_{n=1}^N \delta_n^t \nu_n}.
\end{align*}
Applying Lemma~\ref{l:estimate} (see also \eqref{e:order1}) we find for $t \in L_\alpha$, given Condition~\ref{cond:rigidity}
\begin{align}\label{e:QRtrue}
   N^{-\alpha -11s + 2/3} \leq E_{\mathrm{QR}}^{\mathrm{True}}(t) \leq  N^{-\alpha + 11s + 2/3},
\end{align}
for sufficiently large $N$.  We obtain the following error estimate.

\begin{proposition}\label{p:QRtrue}
For $\alpha \geq 10/3 + \sigma$, $\epsilon = N^{-\alpha/2}$,
\begin{align*}
\epsilon^{-1} E_{\mathrm{QR}}^{\mathrm{True}}(T_{\mathrm{QR},\epsilon})
\end{align*}
converges to zero in probability, while
\begin{align*}
\epsilon^{-2} E_{\mathrm{QR}}^{\mathrm{True}}(T_{\mathrm{QR},\epsilon})
\end{align*}
converges to $\infty$ in probability.
\end{proposition}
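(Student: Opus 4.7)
The plan is to observe that Proposition~\ref{p:QRtrue} is essentially deterministic on the high-probability rigidity event $\mathcal R_{N,s}$, so the proof amounts to combining the pointwise two-sided bound \eqref{e:QRtrue} on $E_{\mathrm{QR}}^{\mathrm{True}}(t)$ with the a priori interval $L_\alpha$ for $T_{\mathrm{QR},\epsilon}$ from Lemma~\ref{l:QR-T}, and then upgrading to convergence in probability via Theorem~\ref{t:generic}.

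First I would note that Lemma~\ref{l:QR-T} places $T_{\mathrm{QR},\epsilon}$ in the interval $L_\alpha$ whenever Condition~\ref{cond:rigidity} holds, so \eqref{e:QRtrue} applies at $t = T_{\mathrm{QR},\epsilon}$. Substituting $\epsilon = N^{-\alpha/2}$ in the upper half of \eqref{e:QRtrue} yields
\begin{equation*}
\epsilon^{-1} E_{\mathrm{QR}}^{\mathrm{True}}(T_{\mathrm{QR},\epsilon}) \leq N^{-\alpha/2 + 2/3 + 11s}
\end{equation*}
on $\mathcal R_{N,s}$. Using $\alpha \geq 10/3 + \sigma$, the exponent is at most $-1 - \sigma/2 + 11s$, which is strictly negative for $s < \sigma/40$, so the right-hand side tends deterministically to zero. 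Symmetrically, the lower half of \eqref{e:QRtrue} gives
\begin{equation*}
\epsilon^{-2} E_{\mathrm{QR}}^{\mathrm{True}}(T_{\mathrm{QR},\epsilon}) \geq N^{2/3 - 11s}
\end{equation*}
on $\mathcal R_{N,s}$, and the exponent is positive for $s < 2/33$ (which is implied by $s < \sigma/40$ for any $\sigma < 1$), so this tends deterministically to infinity.

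To convert these deterministic bounds to convergence in probability, I would fix $\delta > 0$ and $R > 0$ and write, for instance,
\begin{equation*}
\mathbb P\!\left(\epsilon^{-1} E_{\mathrm{QR}}^{\mathrm{True}}(T_{\mathrm{QR},\epsilon}) > \delta\right) \leq \mathbb P\!\left(\epsilon^{-1} E_{\mathrm{QR}}^{\mathrm{True}}(T_{\mathrm{QR},\epsilon}) > \delta,\,\mathcal R_{N,s}\right) + \mathbb P(\mathcal R_{N,s}^c).
\end{equation*}
The first term vanishes once $N$ is large enough that $N^{-1 - \sigma/2 + 11s} < \delta$, and the second term is $o(1)$ by Theorem~\ref{t:generic}. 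An identical argument handles $\mathbb P(\epsilon^{-2} E_{\mathrm{QR}}^{\mathrm{True}}(T_{\mathrm{QR},\epsilon}) < R)$.

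There is no real obstacle here: all of the analytic work is carried out in establishing \eqref{e:QRtrue} and Lemma~\ref{l:QR-T}, and the proposition is a short exponent-bookkeeping exercise followed by an invocation of Theorem~\ref{t:generic}. The only minor point to verify is that the allowed range $s < \sigma/40$ (used throughout the paper) is compatible with the inequalities $11s < 1 + \sigma/2$ and $11s < 2/3$ needed in Steps~2 and 3, which it clearly is.
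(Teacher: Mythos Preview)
Your proof is correct and follows essentially the same route as the paper's: both restrict to the high-probability event $\mathcal R_{N,s}$, use Lemma~\ref{l:QR-T} to place $T_{\mathrm{QR},\epsilon}$ in $L_\alpha$, read off the two-sided bound \eqref{e:QRtrue}, and then invoke Theorem~\ref{t:generic} to handle $\mathcal R_{N,s}^c$. The only differences are cosmetic (you track the exponent as $-1-\sigma/2+11s$ and $2/3-11s$ under $s<\sigma/40$, whereas the paper uses the slightly looser thresholds $s<\sigma/22$ and $s<1/33$ to write the bounds as $N^{-1}$ and $N^{1/3}$).
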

\begin{proof}
First, given Condition~\ref{cond:rigidity}, $T_{\mathrm{QR},\epsilon} \in L_\alpha$ and for $\delta > 0$
\begin{align*}
\mathbb P \left( \epsilon^{-1} E_{\mathrm{QR}}^{\mathrm{True}}(T_{\mathrm{QR},\epsilon}) \geq \delta  \right) &= \mathbb P \left( \epsilon^{-1} E_{\mathrm{QR}}^{\mathrm{True}}(T_{\mathrm{QR},\epsilon}) \geq \delta, \mathcal R_{N,s}  \right) \\
&+ \mathbb P \left( \epsilon^{-1} E_{\mathrm{QR}}^{\mathrm{True}}(T_{\mathrm{QR},\epsilon}) \geq \delta, \mathcal R^c_{N,s}  \right).
\end{align*}
It then follows that on $\mathcal R_{N,s}$ for sufficiently large $N$ and $s < \sigma/22$
\begin{align*}
  \epsilon^{-1} E_{\mathrm{QR}}^{\mathrm{True}}(T_{\mathrm{QR},\epsilon}) \leq N^{-\alpha/2 + 11s + 2/3} \leq N^{-1}.
\end{align*}
Therefore
\begin{align*}
  \limsup_{N \to \infty} \mathbb P \left( \epsilon^{-1} E_{\mathrm{QR}}^{\mathrm{True}}(T_{\mathrm{QR},\epsilon}) \geq \delta  \right)  \leq \limsup_{N \to \infty } \mathbb P(\mathcal R^c_{N,s}) = 0.
\end{align*}
It then follows that on $\mathcal R_{N,s}$ for sufficiently large $N$ and $s < 1/33$
\begin{align*}
  \epsilon^{-2} E_{\mathrm{QR}}^{\mathrm{True}}(T_{\mathrm{QR},\epsilon}) \geq N^{1/3}.
\end{align*}
Therefore
\begin{align*}
  \limsup_{N \to \infty} \mathbb P \left( \epsilon^{-1} E_{\mathrm{QR}}^{\mathrm{True}}(T_{\mathrm{QR},\epsilon}) \geq \delta  \right)  \geq \liminf_{N \to \infty } \mathbb P(\mathcal R_{N,s}) = 1.
\end{align*}
\end{proof}

\begin{remark}\label{r:QR-true}
Define the ``true'' halting time by
\begin{align*}
T_{\mathrm{QR},\epsilon}^{\mathrm{True}}(H) := \inf \{t: E_{\mathrm{QR}}^{\mathrm{True}} \leq \epsilon \}.
\end{align*}
We omit the details, but one can show that
\begin{align*}
F_\beta^{\mathrm{gap}}(t) =  \lim_{N \to \infty} \mathbb P \left( \frac{\displaystyle T^{\mathrm{True}}_{\mathrm{QR},\epsilon}(H)}{ \displaystyle  2^{-7/6} \lambda_-^{1/3} d^{-1/2} N^{2/3}  (\log \epsilon^{-1} - 2/3) \log N}  \leq t\right).
\end{align*}
So that $ T_{\mathrm{QR},\epsilon}^{\mathrm{True}} $ has the same limiting distribution as $T_{\mathrm{QR},\epsilon}$.
\end{remark}

\subsection{Inverse power method}\label{a:IP-error}

The true error for the inverse power method is also given by
\begin{align*}
E_{\mathrm{IP}}^{\mathrm{True}}(t) &= |\lambda_1 - \lambda_{\mathrm{IP}}(t)| = \left|\lambda_1 -  \frac{\sum_{n=1}^N \lambda_n^{-2t} \beta_n^2}{\sum_{n=1}^N \lambda_n^{-2t-1} \beta_n^2}\right|.
\end{align*}
Following the calculations that led to \eqref{e:QRtrue}, given Condition~\ref{cond:rigidity}, for sufficiently large $N$ and $t \in \hat L_\alpha$,
\begin{align*}
N^{-\alpha + 2/3 - 11s} \leq E_{\mathrm{IP}}^{\mathrm{True}}(t) \leq N^{-\alpha + 2/3 + 11s}.
\end{align*}
Here we are conservative with the factor on $s$ sot hat it mirrorw \eqref{e:QRtrue}.  An analogous formula holds for the power method.  We arrive at the following propsition that is proved in the exact same way as Proposition~\ref{p:QRtrue}.
\begin{proposition}\label{p:Powertrue}
For $\alpha \geq 10/3 + \sigma$, $\epsilon = N^{-\alpha/2}$,
\begin{align*}
\epsilon^{-1} |\lambda_1 - \lambda_{\mathrm{IP}}(T_{\mathrm{IP},\epsilon})| \quad \text{ and } \quad \epsilon^{-1} |\lambda_1 - \lambda_{\mathrm{P}}(T_{\mathrm{P},\epsilon})|
\end{align*}
converge to zero in probability, while
\begin{align*}
\epsilon^{-2} |\lambda_1 - \lambda_{\mathrm{IP}}(T_{\mathrm{IP},\epsilon})| \quad \text{ and } \quad \epsilon^{-2} |\lambda_1 - \lambda_{\mathrm{P}}(T_{\mathrm{P},\epsilon})|
\end{align*}
converge to $\infty$ in probability.
\end{proposition}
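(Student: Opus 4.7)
The plan is to imitate the proof of Proposition~\ref{p:QRtrue} almost verbatim, once the pointwise estimate
\begin{align*}
N^{-\alpha + 2/3 - 11s} \;\leq\; E_{\mathrm{IP}}^{\mathrm{True}}(t) \;\leq\; N^{-\alpha + 2/3 + 11s}, \qquad t \in \hat L_\alpha,
\end{align*}
asserted in \S\ref{a:IP-error} is in place. To justify this two-sided bound, I first factor $\lambda_1^{-2t}\beta_1^{2}$ out of the numerator and denominator of $\lambda_{\mathrm{IP}}(t)$ to obtain the closed form
\begin{align*}
E_{\mathrm{IP}}^{\mathrm{True}}(t) \;=\; \lambda_1 \,\frac{\sum_{n=2}^N (\Delta_n/\lambda_n)\,\delta_n^{t}\,\nu_n}{\sum_{n=1}^{N}\delta_n^{t+1/2}\,\nu_n}.
\end{align*}
Under Condition~\ref{cond:rigidity} together with Remark~\ref{r:quantiles}, every $\lambda_n$ lies in a fixed compact subinterval of $(0,\infty)$, so the factors $\lambda_1$ and $\lambda_n^{-1}$ are $\Theta(1)$ uniformly in $n$ and $N$. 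The denominator is bounded between $1$ (its $n=1$ term) and a universal constant, using the argument that yields \eqref{e:order1}. For the numerator we apply Lemma~\ref{l:estimate} with $j = 1$ and $c = 2$: the lower bound $\sum_{n\geq 2}\Delta_n\delta_n^{t}\nu_n \geq N^{-2s}\Delta_2\delta_2^{t}$ together with Condition~\ref{cond:rigidity} ($\Delta_2 \geq N^{-2/3-s/2}$) and the definition of $\hat L_\alpha$ ($\delta_2^{t} \geq N^{-\alpha+4/3-O(s)}$) yields the lower estimate; the matching upper estimate $C\delta_2^{t}(N^{4s}\Delta_2 + N^{1+s}\delta_2^{2t})$ is handled identically (the second summand is absorbed since $\alpha \ge 10/3 + \sigma$). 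Collecting all $N^{s}$ contributions with a generous constant gives the stated exponent $\pm 11s$.

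With the pointwise bound in hand I follow the template of Proposition~\ref{p:QRtrue}. Lemma~\ref{l:Tip} places $T_{\mathrm{IP},\epsilon}\in\hat L_\alpha$ on the event $\mathcal R_{N,s}$, so on $\mathcal R_{N,s}$
\begin{align*}
\epsilon^{-1}E_{\mathrm{IP}}^{\mathrm{True}}(T_{\mathrm{IP},\epsilon}) \;\le\; N^{-\alpha/2 + 2/3 + 11s}, \qquad \epsilon^{-2}E_{\mathrm{IP}}^{\mathrm{True}}(T_{\mathrm{IP},\epsilon}) \;\ge\; N^{2/3 - 11s}.
\end{align*}
For $\alpha \ge 10/3+\sigma$ and $s < \sigma/22$ the first exponent is strictly negative, and for $s < 2/33$ the second is strictly positive. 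Consequently, for any fixed $\delta > 0$ and $N$ large,
\begin{align*}
\mathbb P\bigl(\epsilon^{-1}E_{\mathrm{IP}}^{\mathrm{True}}(T_{\mathrm{IP},\epsilon})\ge\delta\bigr) \;\le\; \mathbb P(\mathcal R_{N,s}^{c}), \qquad \mathbb P\bigl(\epsilon^{-2}E_{\mathrm{IP}}^{\mathrm{True}}(T_{\mathrm{IP},\epsilon})\le\delta\bigr) \;\le\; \mathbb P(\mathcal R_{N,s}^{c}),
\end{align*}
and Theorem~\ref{t:generic} gives $\mathbb P(\mathcal R_{N,s}^{c}) \to 0$. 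This establishes both convergences in probability for the inverse power method.

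The power method case is obtained by the duality observed in Remark~\ref{r:power}: applied to $H$ it is structurally the inverse power method applied to $H^{-1}$, with $\lambda_N$ playing the role of $\lambda_1$ and $\lambda_+$ the role of $\lambda_-$. On $\mathcal R_{N,s}$ the spectrum of $H^{-1}$ lies in a compact subinterval of $(0,\infty)$ and the analogues of Conditions~\ref{cond:uppergap}--\ref{cond:rigidity} transfer up to absolute constants (using the Lipschitz bound on $x \mapsto x^{-1}$ away from $0$ and Theorems~\ref{t:gap-limit}, \ref{t:generic}, \ref{t:p}); hence Lemma~\ref{l:estimate}, the pointwise two-sided bound, and the probabilistic deduction above apply verbatim, proving the $\lambda_N$-statement.

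The main obstacle is the pointwise bound on $E_{\mathrm{IP}}^{\mathrm{True}}$. Unlike the QR formula \eqref{e:QRtrue}, the inverse power expression carries extra $\lambda_n^{-1}$ factors, and one must verify that these are $\Theta(1)$ uniformly in $n$ under the rigidity event (via Remark~\ref{r:quantiles}) so that the exponent counting collapses to the same form as in Lemma~\ref{l:Tip}. Once that is verified, the remainder of the proof is a direct transcription of Proposition~\ref{p:QRtrue}.
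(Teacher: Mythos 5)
Your proposal is correct and follows the paper's own approach essentially line by line: the paper establishes the two-sided bound on $E_{\mathrm{IP}}^{\mathrm{True}}(t)$ for $t\in\hat L_\alpha$ ``following the calculations that led to \eqref{e:QRtrue}'' (i.e.\ factoring out the dominant $n=1$ terms, invoking \eqref{e:order1} for the denominator and Lemma~\ref{l:estimate} with $j=1$ for the numerator, with the extra $\lambda_n^{-1}$ factors absorbed into constants via Remark~\ref{r:quantiles}), and then transcribes the probabilistic deduction of Proposition~\ref{p:QRtrue} verbatim; the power-method case is handled by the $\lambda_j\mapsto\lambda_j^{-1}$ duality of Remark~\ref{r:power}, exactly as you do. Your closed form for $E_{\mathrm{IP}}^{\mathrm{True}}$ and your exponent bookkeeping check out, so this is the paper's argument with the details filled in rather than a genuinely different route.
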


\begin{remark}\label{r:IP-true}
Following, Remark~\ref{r:QR-true} define the ``true'' halting time by
\begin{align*}
T_{\mathrm{IP},\epsilon}^{\mathrm{True}} := \inf \{t: E_{\mathrm{IP}}^{\mathrm{True}}(t) \leq \epsilon \}.
\end{align*}
Again, omitting the details, one can show that
\begin{align*}
F_\beta^{\mathrm{gap}}(t) =  \lim_{N \to \infty} \mathbb P \left( \frac{\displaystyle T^{\mathrm{True}}_{\mathrm{IP},\epsilon}(H)}{ \displaystyle  2^{-7/6} \lambda_-^{1/3} d^{-1/2} N^{2/3}  (\log \epsilon^{-1} - 2/3) \log N}  \leq t\right).
\end{align*}
So that $ T_{\mathrm{IP},\epsilon}^{\mathrm{True}} $ has the same limiting distribution as $T_{\mathrm{IP},\epsilon}$.  This further justifies the definition of $ T_{\mathrm{IP},\epsilon}$.
\end{remark}

\section{Asymptotic normality of the eigenvector projections} \label{sec:proj}

This section presents the estimates that are required to prove Theorem~\ref{t:main} for the power and inverse power methods when the initial unit vector $v$ is chosen randomly.

\begin{theorem}\label{t:normalprojection}
  Let $v = v_N \in \mathbb R^N$ ($\beta = 1$) or $v = v_N \in \mathbb C^N$ ($\beta = 2$) be a unit vector\footnote{To be precise about this, fix a semi-infinite vector $w = (w_1,w_2,\ldots, w_N, \ldots)$.  Then for $y_N := (w_1,\ldots,w_N)$ define $v_N = y_N/\|y_N\|_2$.} and fix $j > 0$.  Let $u_j$ and $u_{N-j+1}$ be the eigenvectors of an SCM corresponding to $\lambda_j$ and $\lambda_{N-j+1}$.  Then for any bounded, continuous function $h: \mathbb R \to \mathbb R$
  \begin{align*}
    \mathbb E h(N^{1/2}|\langle v,u_j \rangle |) \to \mathbb Eh(|G_\beta|), \quad \mathbb E h(N^{1/2}|\langle v,u_{N-j+1} \rangle|) \to \mathbb Eh(|G_\beta|), \quad N \to \infty,
  \end{align*}
  where $G_\beta$ is either a standard normal ($\beta =1$) or a standard complex ($\beta=2$) random variable.  That is, we have convergence in distribution to $|G_\beta|$
\end{theorem}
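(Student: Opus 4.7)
The plan is to reduce the problem to the Gaussian case (LOE for $\beta=1$, LUE for $\beta=2$) by appealing to eigenvector universality, and then settle the Gaussian case directly via rotational invariance.

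For the Gaussian case, the key observation is that when the entries of $V$ are iid standard real (resp.\ complex) Gaussians, the joint law of $V$ is invariant under right multiplication by any orthogonal (resp.\ unitary) matrix. Consequently $H = V^*V/M$ is invariant under conjugation $H \mapsto O^* H O$, which forces the eigenvector matrix $U = [u_1, \ldots, u_N]$ to be Haar distributed on $O(N)$ (resp.\ $U(N)$), independently of the eigenvalues. For the deterministic unit vector $v_N$, the rotated vector $U^* v_N$ is therefore uniformly distributed on the real (resp.\ complex) unit sphere, and so its $j$-th coordinate $\langle v_N, u_j\rangle$ has the same law as the first coordinate of a uniform vector on $S^{N-1}$ (resp.\ $S^{2N-1}$). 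The Poincar\'e construction realizes a uniform point on the sphere as $G/\|G\|_2$ where $G$ has iid standard Gaussian entries. Since $\|G\|_2^2/N \to 1$ in probability, it follows that $\sqrt{N}(U^*v_N)_j \to G_\beta$ in distribution, and the continuous mapping theorem together with boundedness of $h$ yields the stated convergence. The argument for $u_{N-j+1}$ is identical.

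For general entries satisfying Definition~\ref{def:WE}, I would invoke the isotropic edge universality results of Bloemendal, Erd\H{o}s, Knowles, Yau and Yin \cite{Bloemendal2016}. Writing the overlap as a contour integral
\begin{align*}
|\langle v_N, u_j\rangle|^2 = -\frac{1}{2\pi \I} \oint_{\Gamma_j} \langle v_N, (H - z)^{-1} v_N\rangle \, \D z
\end{align*}
over a small contour $\Gamma_j$ enclosing $\lambda_j$ and no other eigenvalue, the isotropic local law controls the integrand uniformly, and a Green's function comparison argument (the isotropic analog of the four-moment theorem) transfers the distributional limit from the Gaussian ensemble to the general one. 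Edge rigidity from Theorem~\ref{t:gap-limit} guarantees that such a contour can be chosen with high probability. The conclusion for $h$ bounded and continuous follows by combining the comparison with the Gaussian computation above.

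The main obstacle is the second step: matching the formulation required here, namely convergence of $\mathbb{E}h(\sqrt N|\langle v_N, u_j\rangle|)$ against bounded continuous test functions, to the isotropic edge universality statements available in the literature. Isotropic delocalization by itself gives only the deterministic bound $|\langle v_N, u_j\rangle|^2 \leq N^{-1+o(1)}$ with high probability, which yields tightness but not the specific Gaussian limit; extracting the precise CLT-type limit requires the Green's function comparison outlined above, together with a truncation argument that uses boundedness of $h$ to absorb the event where edge rigidity fails.
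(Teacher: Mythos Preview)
Your approach is essentially the paper's: reduce to the Gaussian ensemble via eigenvector universality, then handle the Gaussian case by Haar invariance and the Poincar\'e construction together with the law of large numbers for $\|G\|_2^2/N$. The Gaussian computation you give matches the paper's almost line for line.

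The one place you diverge is in the reduction step. You sketch a contour-integral representation for $|\langle v_N,u_j\rangle|^2$ combined with an isotropic local law and a Green's function comparison, and then flag this as the ``main obstacle.'' In fact this obstacle is already removed in the literature: the paper simply invokes \cite[Theorem~8.2]{Bloemendal2016}, which states directly that
\[
\mathbb E\, h\bigl(N^{1/2}|\langle v,u_j\rangle|\bigr)\;-\;\mathbb E_{\mathrm W}\, h\bigl(N^{1/2}|\langle v,u_j\rangle|\bigr)\;\longrightarrow\;0
\]
for bounded continuous $h$, where $\mathbb E_{\mathrm W}$ denotes expectation with respect to the Wishart ensemble. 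Your contour/comparison outline is morally what lies behind that theorem, so nothing you wrote is wrong; it is just that you are re-deriving a result that can be cited off the shelf, and your stated uncertainty is unwarranted. Once you replace your second paragraph by a direct appeal to that theorem, the proof is complete and coincides with the paper's.
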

\begin{proof}
We present the proof for $\beta =1$ and $u_j$ as the other cases are completely analogous.  From \cite[Theorem~8.2]{Bloemendal2016} it follows that
\begin{align*}
\mathbb E h(N^{1/2}|\langle v,u_j \rangle|) - \mathbb E_{\mathrm W} h(N^{1/2}|\langle v,u_j \rangle|) \to 0 ~~\mathrm{ as }~~ N \to \infty,
\end{align*}
where $\mathbb E_{\mathrm{W}}$ is the expectation with respect to the Wishart (LOE) ensemble ($X_{ij}$ are iid standard normal random variables).  And so, it is enough to prove the statement for the Wishart ensemble.  In this case it is well known that the eigenvectors are distributed with Haar measure on the orthogonal group.  Let $Y = (Y_1,\ldots,Y_N)^T$ be a vector of iid standard normal random variables.  It follows that
\begin{align*}
u_j \overset{\mathrm{dist}}{=} Y/\|Y\|_2
\end{align*}
and $\langle v,Y\rangle$ is a standard normal random variable.  And, so it suffices to show that
\begin{align}\label{e:inprob}
\left[\frac{N^{1/2}}{\|Y\|_2} -1 \right]\langle v,Y \rangle \overset{\mathrm{prob}}{\to} 0.
\end{align}
Indeed, if the difference of two random variable converges to zero in probability and the first converges in distribution then so does the second (to the same distribution).  Fix $\delta>0$ and consider for $R >0$
\begin{align*}
\mathbb P \left( \left|\frac{N^{1/2}}{\|Y\|_2} -1 \right| |\langle v,Y \rangle| \geq \delta \right)  &= \mathbb P \left( \left|\frac{N^{1/2}}{\|Y\|_2} -1 \right| |\langle v,Y \rangle| \geq \delta, |\langle v,Y \rangle| \geq R \right) \\
&+ \mathbb P \left( \left|\frac{N^{1/2}}{\|Y\|_2} -1 \right| |\langle v,Y \rangle| \geq \delta, |\langle v,Y \rangle| < R \right)\\
& \leq \mathbb P(|\langle v,Y| \geq R) + \mathbb P\left(  \left|\frac{N^{1/2}}{\|Y\|_2} -1 \right|  \geq \delta R^{-1}\right).
\end{align*}
A consequence of the Strong Law of Large Numbers (SLLN) is that the latter term tends to zero as $N \to \infty$:  The SLLN implies that $\frac{\|Y\|^2_2}{N}  \to 1$ a.s., hence $\frac{N^{1/2}}{\|Y\|_2} \to 1$ a.s. and therefore $\frac{N^{1/2}}{\|Y\|_2} \to 1$ in probability.  Then letting $R \to \infty$, \eqref{e:inprob} follows.
\end{proof}

\begin{corollary}\label{c:normalprojection}
 Theorem~\ref{t:normalprojection} holds when $v$ is a random unit vector, independent of the given SCM.  In this case $\mathbb E(\cdot)$ should be understood as the expectation with respect to both the distribution on $v$ and the SCM.
\end{corollary}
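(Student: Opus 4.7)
The plan is to reduce the random-$v$ statement to the deterministic-$v$ statement (Theorem~\ref{t:normalprojection}) by conditioning on $v$, and then to verify that the convergence is uniform in the unit vector $v$, so the outer integration over $v$ causes no trouble. I will treat the $\beta=1$, $u_j$ case; the others are identical.

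First I would write, using independence of $v$ and the SCM,
\begin{align*}
\mathbb E\, h\bigl(N^{1/2}|\langle v,u_j\rangle|\bigr) = \int_{\{\|v\|=1\}} \mathbb E_{\mathrm{SCM}}\, h\bigl(N^{1/2}|\langle v,u_j\rangle|\bigr)\, d\mu_N(v),
\end{align*}
where $\mu_N$ is the (arbitrary) distribution of the random unit vector. It therefore suffices to prove that the integrand
\[
F_N(v) := \mathbb E_{\mathrm{SCM}}\, h\bigl(N^{1/2}|\langle v,u_j\rangle|\bigr)
\]
converges to $\mathbb E\, h(|G_\beta|)$ uniformly over unit vectors $v$, since $\mu_N$ is a probability measure and $h$ is bounded.

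Second, I would handle the Wishart (LOE) baseline. Because $u_j$ is Haar-distributed on $O(N)$, the random variable $\langle v, u_j\rangle$ has a distribution depending on $v$ only through $\|v\|=1$; concretely, $u_j \stackrel{d}{=} Y/\|Y\|_2$ for $Y$ a vector of iid $\mathcal{N}(0,1)$ entries, and $\langle v, Y\rangle \sim \mathcal{N}(0,1)$ for every unit $v$. Hence
\[
F_N^{\mathrm W}(v) := \mathbb E_{\mathrm W}\, h\bigl(N^{1/2}|\langle v,u_j\rangle|\bigr) = \mathbb E\, h\!\left(\tfrac{N^{1/2}|\langle v,Y\rangle|}{\|Y\|_2}\right)
\]
is \emph{independent of the unit vector $v$}, and the SLLN-based argument from the proof of Theorem~\ref{t:normalprojection} shows $F_N^{\mathrm W}(v) \to \mathbb E\, h(|G_\beta|)$.

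Third, for general SCMs, I would invoke the four-moment-type eigenvector comparison from \cite[Theorem~8.2]{Bloemendal2016} that was used in the proof of Theorem~\ref{t:normalprojection}. The key point is that this comparison gives a bound
\[
\bigl|F_N(v) - F_N^{\mathrm W}(v)\bigr| \longrightarrow 0 \quad (N \to \infty)
\]
whose right-hand side does not depend on the choice of unit vector $v$ (the bound involves only the test function $h$ and the eigenvector $u_j$, not the coefficients of $v$). Combining this uniform estimate with the $v$-independent limit of $F_N^{\mathrm W}$ gives $\sup_{\|v\|=1}|F_N(v)-\mathbb E\, h(|G_\beta|)| \to 0$. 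Integrating against $\mu_N$ finishes the proof.

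The only real obstacle is checking that the comparison step in \cite[Theorem~8.2]{Bloemendal2016} is genuinely uniform in the deterministic projection direction $v$; this is where I would spend the most care. A fallback, if the uniformity were not available off the shelf, is to bypass it by tower-conditioning and dominated convergence: fix any subsequence along which $\mu_N$ converges weakly (on the compact unit sphere, modulo passing to a subsequence) to some law $\mu_\infty$; for each fixed $v$ in the support, Theorem~\ref{t:normalprojection} gives the pointwise limit $\mathbb E\, h(|G_\beta|)$, and boundedness of $h$ together with bounded convergence delivers the same limit for the integrated quantity, establishing convergence of the full sequence since the limit does not depend on the chosen subsequence.
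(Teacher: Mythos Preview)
Your main line of argument (uniform convergence over unit vectors $v$) is correct and would work, but it is more than the paper needs. The paper's proof is considerably shorter: write $\mathbb E = \mathbb E_v\,\mathbb E_{\mathrm{SCM}}$, observe that for every realization of the random sequence $(v_N)$ the deterministic Theorem~\ref{t:normalprojection} already gives $\mathbb E_{\mathrm{SCM}}\, h(N^{1/2}|\langle v,u_j\rangle|) \to \mathbb E\, h(|G_\beta|)$, and then apply the bounded convergence theorem (since $h$ is bounded) to pass the limit through $\mathbb E_v$. No uniformity in $v$ is required --- pointwise convergence together with the uniform bound $\|h\|_\infty$ suffices. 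Your observation that the Wishart inner expectation is literally $v$-independent is exactly what makes this pointwise convergence automatic, so you already had the key ingredient; you simply wrapped it in a stronger (uniform) statement than necessary.

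Your fallback, however, has a genuine gap: the measures $\mu_N$ live on unit spheres $S^{N-1}$ of \emph{different} dimensions, so ``passing to a subsequence along which $\mu_N$ converges weakly to some $\mu_\infty$'' is not meaningful as stated --- there is no common compact space on which these measures sit, and ``a fixed $v$ in the support'' cannot belong to all the spheres simultaneously. The correct fallback is precisely the paper's argument: tower over $v$ and use bounded convergence directly, with the constant function $\|h\|_\infty$ as the dominating bound. No subsequences or limiting measures are needed.
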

\begin{proof}
We express $\mathbb E = \mathbb E_v \mathbb E_{\mathrm{SCM}}$. Let $h$ be a bounded, continuous function $h: \mathbb R \to \mathbb R$. Then Theorem~\ref{t:normalprojection} states
\begin{align*}
\mathbb E_{\mathrm{SCM}} h(N^{1/2}|\langle v,u_j \rangle |) \to \mathbb Eh(|G_\beta|) \quad \mathrm{a.s.}.
\end{align*}
By the bounded convergence theorem
\begin{align*}
\mathbb E_v \mathbb E_{\mathrm{SCM}} h(N^{1/2}|\langle v,u_j \rangle |) \to  \mathbb E_V\mathbb Eh(|G_\beta|) = \mathbb Eh(|G_\beta|),
\end{align*}
as $N \to \infty$, and the corollary follows.
\end{proof}

\begin{proposition}\label{p:componentbound}
Given an SCM, let $v$ be a random\footnote{This also holds for deterministic $v$.} unit vector independent of the SCM.  Define $\beta_j = |\langle v, u_j \rangle|$, $j = 1,2,\ldots,N$ where $u_j$ is the $j$th eigenvector of the SCM. Fix $s> 0$ and let $\mathcal U_{N,s}$ be the set of matrices where
\begin{itemize}
\item $\beta_j \leq N^{-1/2 +s/2}$ for all $1 \leq j \leq N$, and
\item $\beta_j \geq N^{-1/2-s/2}$ for $j = 1,2,3,N-2,N-1,N$.
\end{itemize}
Then $\mathbb P(\mathcal U_{N,s}) = 1 + o(1)$ as $N \to \infty$, \emph{i.e.} these conditions hold with high probability.
\end{proposition}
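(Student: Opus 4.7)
The plan is to treat the upper and lower bounds separately and combine them by a union bound at the end.

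For the lower bound, I would invoke Corollary~\ref{c:normalprojection}: for each fixed $j$, the random variable $N^{1/2}\beta_j$ converges in distribution to $|G_\beta|$. Because $\mathbb{P}(|G_\beta|=0)=0$ and $N^{-s/2}\to 0$, the portmanteau theorem yields
\[
  \mathbb{P}\bigl(N^{1/2}\beta_j<N^{-s/2}\bigr)\;\longrightarrow\;\mathbb{P}(|G_\beta|=0)\;=\;0
\]
for each $j\in\{1,2,3,N-2,N-1,N\}$. Since only six indices are involved, a union bound immediately yields the lower-bound portion of $\mathcal{U}_{N,s}$ with probability $1-o(1)$.

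For the upper bound, a bound uniform in $j$ is required, so I would first condition on the realization of $v$ (using that $v$ is independent of the SCM) and treat $v$ as deterministic. The key input is the isotropic delocalization estimate for sample covariance matrices, a consequence of the isotropic local Marchenko--Pastur law of Bloemendal--Knowles--Yau--Yin (already invoked elsewhere in the paper via \cite{Bloemendal2016}): for any deterministic unit vector $w\in\mathbb{C}^N$ and any $\eta>0$,
\[
  \max_{1\le j\le N}|\langle w,u_j\rangle|^2\;\le\;N^{-1+\eta}
\]
holds with probability at least $1-N^{-D}$ for every fixed $D>0$, once $N$ is sufficiently large. Setting $\eta=s$ and integrating over the law of $v$, this yields $\beta_j\le N^{-1/2+s/2}$ simultaneously for all $j$ with probability $1-o(1)$. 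Combining with the lower-bound step gives $\mathbb{P}(\mathcal{U}_{N,s})=1+o(1)$, as claimed.

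The main obstacle is the uniform upper bound. Corollary~\ref{c:normalprojection} gives only fixed-$j$ convergence in distribution, and a naive union bound over $N$ indices would demand tail decay of order $o(1/N)$, which a convergence-in-distribution statement alone cannot supply. Invoking the stronger isotropic delocalization theorem is precisely what closes this gap. By contrast, the lower bound is comparatively soft, as it involves only a fixed finite number of indices and reduces directly to the fact that the limiting random variable $|G_\beta|$ is positive almost surely.
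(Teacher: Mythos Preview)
Your proposal is correct and follows essentially the same approach as the paper: the upper bound is obtained by conditioning on $v$, invoking the isotropic delocalization estimate from \cite{Bloemendal2016} (the paper uses it for each $j$ and takes a union bound, while you cite the version with the maximum already built in), and then averaging over the law of $v$; the lower bound follows from Corollary~\ref{c:normalprojection} together with $\mathbb{P}(|G_\beta|=0)=0$, which the paper packages as an application of Lemma~\ref{l:high} and you unpack via the portmanteau theorem.
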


\begin{proof}
The delocalization result \cite[Theorem 2.17]{Bloemendal2016} states that \emph{for deteriministic} unit vectors $v$ and all $s, D> 0$:
\begin{align*}
 \sup_v\mathbb P \left( |\langle v, u_j \rangle| > N^{-1+s} \right) \leq N^{-D}, \quad N \geq N_0(s,D).
\end{align*}
This implies
\begin{align*}
 \sup_v\mathbb P \left( |\langle v, u_j \rangle| > N^{-1+s} \text{ for no } j\right) \leq N^{-D+1}.
\end{align*}
So, let $D > 1$.  Stated another way,
\begin{align*}
\mathbb E_{\mathrm{SCM}} \mathds{1}_{\{|\langle v, u_j \rangle| > N^{-1+s}\text{ for no } j\}} \to 0, \quad N \to \infty,
\end{align*}
uniformly in $v$.  And so, taking an expectation with respect to the law of $v$ we find that
\begin{align*}
\mathbb P \left( |\langle v, u_j \rangle| \leq N^{-1+s} \text{ for all } j\right) \to 1, \quad N \to \infty
\end{align*}
Then for $j = 1,2,3,N-2,N-1,N$
\begin{align*}
\mathbb P(\beta_j \geq N^{-1/2-s/2}) = 1 + o(1), \quad N \to \infty,
\end{align*}
follows from Corollary~\ref{c:normalprojection} after applying Lemma~\ref{l:high}.

\end{proof}

\bibliographystyle{plain}
\bibliography{library}
\end{document}